\documentclass[a4paper,12pt]{article}

\usepackage{graphicx} 
\usepackage{geometry}
\usepackage{amsfonts,amsmath,amssymb,amsthm,enumitem}
\usepackage{thmtools}
\usepackage{mathcomp,mathrsfs}
\usepackage{bm}
\usepackage{tikz-cd}
\usepackage[linktoc=page]{hyperref}
\usepackage[capitalise]{cleveref}

\theoremstyle{definition}

\newtheorem{definition}{Definition}[section]
\newtheorem*{definition*}{Definition}
\newtheorem{notation}[definition]{Notation}
\newtheorem*{notation*}{Notation}

\newtheorem*{assumption*}{Assumption}

\theoremstyle{plain}

\newtheorem{theorem}[definition]{Theorem}
\newtheorem{theoremAlph}{Theorem}

\newtheorem*{theorem*}{Theorem}

\newtheorem*{conjecture*}{Conjecture}
\newtheorem{proposition}[definition]{Proposition}
\newtheorem*{proposition*}{Proposition}
\newtheorem{lemma}[definition]{Lemma}
\newtheorem*{lemma*}{Lemma}

\newtheorem*{corollary*}{Corollary}

\theoremstyle{remark}

\newtheorem{example}[definition]{Example}
\newtheorem{remark}[definition]{Remark}

\DeclareMathOperator{\Coker}{Coker}

\DeclareMathOperator{\fac}{fac}
\DeclareMathOperator{\Ker}{Ker}
\DeclareMathOperator{\GL}{GL}
\DeclareMathOperator{\Hom}{Hom}

\DeclareMathOperator{\Ima}{Im}

\DeclareMathOperator{\rk}{rk}

\DeclareMathOperator{\sub}{sub}

\newcommand{\st}{\mathrm{s}}

\newcommand\GG{\mathbb{G}}

\newcommand\gl{\mathfrak{gl}}

\newcommand\bbZ{\mathbb{Z}}
\newcommand\bfalpha{\bm{\alpha}}

\newcommand\bfd{\mathbf{d}}
\newcommand\bff{\mathbf{f}}
\newcommand\bfm{\mathbf{m}}

\newcommand\bfr{\mathbf{r}}

\newcommand\rmT{\mathrm{T}}

\newcommand\git{/\!/}

\usepackage{todonotes}

\usepackage[style=alphabetic]{biblatex}
\AtBeginBibliography{\footnotesize}

\title{Nilpotent quiver varieties with multiplicities and symmetrisable crystals}

\author{Victoria Hoskins\thanks{Universit\"{a}t Duisburg-Essen Germany, \texttt{victoria.hoskins@uni-due.de}} \and Joshua Jackson\thanks{St John's College, University of Cambridge, \texttt{jjj26@cam.ac.uk}} \and Tanguy Vernet\thanks{Institute of Science and Technology Austria, \texttt{tanguy.vernet@ist.ac.at}}}
\date{}

\makeatletter
\renewcommand\tableofcontents{\@starttoc{toc}}
\makeatother

\begin{document}

\setcounter{tocdepth}{1}

\maketitle

\abstract{We construct analogues of Nakajima's nilpotent quiver varieties for quivers with multiplicities, by  developing new stability conditions for framed nilpotent quiver representations with multiplicities and exploiting their interplay with Hecke correspondences. Using these new quiver varieties, we extend Saito's geometric construction of Kashiwara crystals of irreducible highest-weight representations of Kac--Moody algebras from the symmetric to the symmetrisable case.}

\paragraph*{Mathematics Subject Classification}
14L30 - 16G20 - 17B67

\tableofcontents

\section{Introduction}

Quiver varieties, introduced by Nakajima in \cite{Nak94}, are a cornerstone of the geometric representation theory of Kac--Moody algebras. Given a \emph{symmetric} Kac--Moody algebra $\mathfrak{g}$, one obtains a quiver by adding an orientation to its Dynkin diagram. Nakajima showed that highest-weight irreducible representations of $\mathfrak{g}$ can be reconstructed from top-degree homology groups of so-called nilpotent quiver varieties associated to that quiver \cite{Nak98}. The Kashiwara crystals \cite{Kas91} of these irreducible representations were subsequently constructed by Saito from irreducible components of the same nilpotent quiver varieties \cite{Sai02}. These results were further generalised to symmetric Borcherds and Borcherds-Bozec algebras in \cite{KKS12,Boz16}.

These results are now understood as part of several interrelated geometric constructions of Kac--Moody algebras. Nilpotent quiver varieties are linked to Ringel-Hall algebras of quivers via a microlocal construction \cite{Rin90a,Lus91,KS97,Hen24a,FL25}, and the action of $\mathfrak{g}$ on their homology extends to an action of a BPS Lie algebra built from the preprojective cohomological Hall algebra of the quiver \cite{SV13b,YZ18a,Dav25a,DHSM23}. Poincaré polynomials of (nilpotent) quiver varieties are also related to Kac polynomials of quivers and can be expressed in terms of Cartan data of Kac--Moody algebras \cite{CBVdB04,HLRV13,Hau10,BS19,BSV20,Dav23c,DHSM23}.

In this paper, we are concerned with geometric realisations of representations of a \emph{symmetrisable} Kac--Moody algebra $\mathfrak{g}$.  Some of the above constructions have been generalised to the symmetrisable case over the past decade. In order to account for Cartan data with non-trivial symmetriser, one should consider quivers with additional structure. Geiss--Leclerc--Schr\"{o}er obtained a realisation of the enveloping algebra of $\mathfrak{g}$ by generalising the Ringel--Hall approach to quivers with multiplicities \cite{GLS16}. Moreover, the construction of the Kashiwara crystal attached to the quantised universal enveloping algebra of $\mathfrak{g}$ \cite{KS97} was extended to the symmetrisable case in several works, using graphs with automorphisms \cite{Sav05}, species \cite{NT18} and quivers with multiplicities \cite{GLS18a}. Geometric realisations of the Yangian and of the (shifted) quantum loop group associated to $\mathfrak{g}$ were also provided by Yang--Zhao, Varagnolo--Vasserot and Cao--Okounkov--Zhou--Zhou using quivers with potential \cite{YZ22,VV23b,COZZ26a}. These quivers with potential appear in \cite{HL16,Neg25b} in a related context.

Following Geiss--Leclerc--Schr\"{o}er's approach \cite{GLS18a}, we give a geometric construction of the crystals of irreducible highest-weight representations of any \emph{symmetrisable} Kac--Moody algebra $\mathfrak{g}$ (see \cref{MainThm/crystalConstruction} below). This builds on the results of Geiss--Leclerc--Schr\"{o}er \cite{GLS18a} in a similar way to how Saito's construction \cite{Sai02} expands on his earlier work with Kashiwara \cite{KS97}.

To carry out our construction, we introduce analogues of Nakajima's nilpotent quiver varieties \cite{Nak94} for quivers with multiplicities (see \cref{MainThm/quotientConstruction} below). These quiver varieties are constructed in the spirit of Geometric Invariant Theory (GIT), as quotients of stable loci in the representation spaces of a framed quiver with multiplicities associated to $\mathfrak{g}$ and a fixed highest weight by a \emph{non-reductive} group. We identify a stability condition on quiver representations with multiplicities, which allows us to generalise Saito's construction \cite{Sai02} to the symmetrisable case. In our previous work on moduli of quiver representations with multiplicities \cite{HJV25}, we introduced new stability conditions and constructed moduli spaces of quiver representations with multiplicities in a general setup, using tools from non-reductive GIT \cite{HHJ}. However, these stability conditions coming from GIT do not fit our needs for the construction of crystals, so we instead define a new notion of stability and recursively construct nilpotent quiver varieties using slices. Our construction of slices relies on so-called \emph{Hecke correspondences} (see below), which parametrise \emph{Hecke modifications} of quiver representations at a vertex and induce \emph{Hecke (or crystal) operators} on the set of irreducible components of nilpotent quiver varieties.

\paragraph*{Main results}

We fix a quiver with multiplicities $(Q,\bfm)$, where $\bfm$ is a collection of positive integers labelled by the vertices of $Q$. We assume throughout that $Q$ has no loop arrows. Let $\bfr$ be a rank vector and $\bff$ be a framing vector. We consider the framed quiver $Q_\bff$ and the analogue $\Lambda_\bfr:=\Lambda_{Q_\bff,\widehat{\bfm},\widehat{\bfr}}$ of Lusztig's nilpotent cone \cite{Lus91} for quivers with multiplicities, which was introduced by Geiss--Leclerc--Schröer \cite{GLS18a}. This is a (very reducible) lagrangian subscheme of the cotangent bundle of the representation space of $(Q_\bff,\widehat{\bfm})$. We refer to \cref{Sect/prerequisitesQuiverModuli} for precise definitions.

Given the above data, we construct a nilpotent quiver variety as the quotient of a certain stable locus $\Lambda_\bfr^\st\subset\Lambda_\bfr$ under the action of a group $\GL_{\bfm,\bfr}$, which is a product of general linear groups with coefficients in rings of truncated power series. This stability condition is similar to Nakajima's original stability condition for classical quiver varieties \cite{Nak94}. The following result is proved in \cref{Sect/constructionNilpQuiverVar}:

\begin{theoremAlph}[\cref{Prop/trivialStabilisers}, \cref{Thm/quotientConstruction}]\label{MainThm/quotientConstruction}
The quotient stack $\left[\Lambda_\bfr^\st/\GL_{\bfm,\bfr}\right]$ is an algebraic space. Moreover, for each irreducible component $Z\subset\Lambda_\bfr^\st$, there is an \emph{explicit} dense open $\GL_{\bfm,\bfr}$-invariant subset $Z_{\mathrm{filt}}^\st\subset Z$ (see \cref{def/stable}) which admits a geometric quotient, such that the quotient map is a Zariski-locally trivial principal $\GL_{\bfm,\bfr}$-bundle.
\end{theoremAlph}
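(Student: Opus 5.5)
The plan has two parts, matching the two assertions of \cref{MainThm/quotientConstruction}.

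\emph{Algebraic space.} I would show that $\GL_{\bfm,\bfr}$ acts on $\Lambda_\bfr^\st$ with trivial stabilisers, and that the action is proper enough for the quotient stack to be representable.

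For trivial stabilisers I would mimic Nakajima's argument. Let $g\in\GL_{\bfm,\bfr}$ fix a stable point $x$. Then $\Ker(g-1)$, taken vertexwise as a submodule over the truncated polynomial rings $H_i=K[\varepsilon]/(\varepsilon^{m_i})$, is a subrepresentation. Its framing component contains the image of the framing maps, because the framing part of $g$ is the identity. Stability in the Nakajima sense says that no proper subrepresentation contains the image of the framing. Hence $\Ker(g-1)$ is everything and $g=1$.

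This already makes $[\Lambda_\bfr^\st/\GL_{\bfm,\bfr}]$ a stack with trivial inertia. To get an algebraic space one also needs the inertia to be trivial as a group scheme, i.e.\ the same argument run scheme-theoretically with $R$-points in place of $K$-points. Then an algebraic stack with trivial inertia (with monomorphic diagonal) is an algebraic space. Since $\GL_{\bfm,\bfr}$ is smooth, $[\Lambda^\st_\bfr/\GL_{\bfm,\bfr}]$ is an algebraic stack. I would therefore check freeness functorially for $R$-points, which works since the kernel argument is linear-algebraic. The stabiliser is then trivial as a group scheme and the diagonal is a monomorphism.

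\emph{Geometric quotient on $Z^\st_{\mathrm{filt}}$.} Here I would use the recursive slice construction via Hecke correspondences. For an irreducible component $Z$, a generic point has a canonical filtration: repeatedly take the maximal quotient at a vertex $i$ on which the crystal operator $\tilde e_i$ acts, i.e.\ the generic value of $\varepsilon_i$. The locus $Z^\st_{\mathrm{filt}}$ is where this filtration has generic type, with each successive step locally free over the $H_i$. Along this filtration, a point of $Z^\st_{\mathrm{filt}}$ can be put, Zariski-locally, in a normal form. The strategy is to choose bases adapted to the filtration, which are free $H_i$-modules, and use them to build a section
\[
S\subset Z^\st_{\mathrm{filt}},
\]
together with an isomorphism $\GL_{\bfm,\bfr}\times S\to Z^\st_{\mathrm{filt}}$ locally. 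Induction on $|\bfr|$ would run through the Hecke correspondence, which presents $Z^\st_{\mathrm{filt}}$ as a bundle over the analogous locus for a smaller rank vector $\bfr-c\,e_i$. The fibres are Grassmannian-type data of free $H_i$-submodules, and these are Zariski-locally trivial by the usual open-cell charts on Grassmannians of free modules over $H_i$.

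Combining the inductive trivialisation with the Grassmannian charts gives local slices. The quotient map is then a Zariski-locally trivial principal bundle, and geometric quotients glue.

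\emph{Main obstacle.} The hardest step should be the inductive step: showing that the Hecke modification at vertex $i$ is locally free of constant rank over $H_i$ on the generic locus, rather than merely of constant dimension over $K$. Freeness over the truncated ring is what makes $\GL_{\bfm,\bfr}$ act freely on adapted bases. I would first try to force it by including freeness of the relevant kernel and cokernel $H_i$-modules in the definition of $Z^\st_{\mathrm{filt}}$. Openness of this condition then follows from semicontinuity applied to each $\varepsilon^k$-graded piece, and nonemptiness from the crystal results of \cite{GLS18a} on generic points of components.
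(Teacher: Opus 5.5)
Your overall plan matches the paper's: trivial stabiliser group schemes plus Conrad's criterion for the first claim, and a recursion along Hecke correspondences with jet-Grassmannian fibres and local slices for the second. However, the first part uses the wrong stability condition. In \cref{def/stable}, stability means that no nonzero $(\overline{Q},\bfm)$-subrepresentation lies in $\Ker(t)$, which is the dual of the condition you invoke. Moreover, on $\Lambda^\st_{\widehat{\bfr}}$ the maps $s_i$ vanish identically (\cref{Lem/framingVanishing}). So ``no proper subrepresentation contains the image of the framing'' fails at every point with $\bfr\neq 0$, and your $\Ker(g-1)$ argument gives nothing there. You must dualise: $g\cdot x=x$ gives $t_i(g_i-1)=0$, and $\Ima(g-1)$ is a subrepresentation contained in $\Ker(t)$. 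For $R$-points you cannot apply the definition to $R$-submodules directly. Stability has to be recast as injectivity of a $\kappa$-linear map, which survives base change. This is what \cref{Prop/trivialStabilisers} does. Each $M_{j,\mathrm{out}}$ is injective by \cref{Lem/stabilityCriterion}, and the E-filtration makes iterated out-maps nilpotent. Composing them sufficiently often therefore yields an injective $\Phi_i\colon k_{m_i}^{\oplus r_i}\hookrightarrow W_i$ into a free module built from framing spaces only, with $\Phi_i(g\cdot x)=\Phi_i(x)\circ g_i^{-1}$. A left inverse of $\Phi_i$ then forces $g=e$.

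In the second part, stability never enters your sketch. The missing link is \cref{Lem/stabilityHeckeModification}: if $M/M'\simeq E_i^{\oplus c}$, then $M$ is stable if and only if $M'$ is stable and $M_{i,\mathrm{out}}$ is injective. This is what places the stable locus of type $F$ over the stable locus of type $F'$. It also identifies the stable points in each fibre with the injective extensions $k_{m_i}^{\oplus r_i}\hookrightarrow\Ker(\widetilde{M}'_{i,\mathrm{in}})$ of $M'_{i,\mathrm{out}}$, on which $U_{\bfm;\bfr,c\bfalpha_i}\rtimes\GL_{m_i,c}$ acts freely with jet-Grassmannian quotient (\cref{Lem/toyJetParabolicQuotient}). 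There is a second gap: the Hecke correspondence gives no morphism from $Z^\st_{\mathrm{filt}}$ to the smaller locus, because $K_i(M)$ lives on the moving submodule $\Ima(M_{i,\mathrm{in}})$. The paper first fixes $\Ima(M_{i,\mathrm{in}})=N_i$ and uses $\Lambda_F\simeq\GL_{\bfm,\bfr}\times^{P}Y_F$ with $P=P_{\bfm;\bfr,c\bfalpha_i}$ special (\cref{Lem/parabolicReduction}). It then shows in \cref{Lem/extensionMap} that $Y_F\to\Lambda_{F',0\bfalpha_i}$ is a vector bundle, namely the kernel of $\widetilde{M}'_{i,\mathrm{in}}\circ(-)$. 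This kernel has constant rank because $\fac_i(M')=0$, and all its points lie in $\Lambda_F$ by \cref{Rem/restrictionHeckeCorrespondence}. The local freeness you single out as the main obstacle is simply imposed by the definition of a type, so it is not where the work lies.

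Incidentally, the corrected first part already gives more. The tuple $(\Phi_i)_i$ is a $\GL_{\bfm,\bfr}$-equivariant morphism from $\Lambda^\st_{\widehat{\bfr}}$ to $\prod_i\Hom^{\mathrm{inj}}_{k_{m_i}}(k_{m_i}^{\oplus r_i},W_i)$. So \cref{Lem/slicesPullback}, together with the jet-Grassmannian quotient used in \cref{prop/quotientBaseStep}, would yield a Zariski-locally trivial quotient directly. The recursion would then only be needed for the explicit fibration structure of \cref{Rem/fibresRecursiveQuotient}.
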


As mentioned above, the proof is by induction on $\bfr$, and our inductive proof relies heavily on the fact that we are working inside the analogue of the nilpotent cone, so all representations are \emph{E-filtered} (i.e.\ built as iterative extensions of representations concentrated at a single vertex). As observed in \cite{Lus91,GLS18a}, the irreducible components of $\Lambda_\bfr$ can be obtained from those of $\Lambda_{\bfr'}$, for various smaller rank vectors $\bfr'<\bfr$, via so-called Hecke correspondences:
\[
\Lambda_{\bfr'}\overset{q}{\longleftarrow}\Lambda_{\bfr',\bfr}\overset{p}{\longrightarrow}\Lambda_\bfr
,
\]
where $\Lambda_{\bfr',\bfr}$ parametrises nested representations of $(Q_\bff,\widehat{\bfm})$. Given an irreducible component $Z\subset\Lambda_\bfr$, we can find $\bfr'<\bfr$ such that $p$ restricts to a principal bundle over an open subset $Z_{\mathrm{filt}}\subset Z$ (see \cref{Rmk/HeckeCorrespondences} for details). We can then construct the non-reductive quotient $Z_{\mathrm{filt}}^\st/\GL_{\bfm,\bfr}$ in a relative fashion, by analysing the map $q:p^{-1}(Z_{\mathrm{filt}}^\st)\rightarrow\Lambda_{\bfr',\mathrm{filt}}^\st$ and exploiting the existence of the quotient $\Lambda_{\bfr',\mathrm{filt}}^\st/\GL_{\bfm,\bfr'}$. This boils down to constructing certain partial flag varieties (over rings of truncated power series) as relative quotients, by providing explicit slices (see \cref{Sect/toyModel}). Roughly speaking, these flag varieties parametrise the vector spaces underlying nested quiver representations in $\Lambda_{\bfr',\bfr}$.

This iteration of relative quotients produces new stability conditions compared to our previous work \cite{HJV25}, but also more intricate quotients. It would be interesting to describe the geometry of these quiver varieties more precisely (embeddings in projective spaces over a base, cell decompositions, etc.). We hope to return to this in future works.

We now explain our results concerning geometric realisations of crystals. One can attach to $(Q,\bfm)$ a symmetrisable Cartan matrix $C$ and the corresponding Kac--Moody algebra $\mathfrak{g}$. Let $\lambda$ be the integral dominant weight whose coordinates in the basis of fundamental weights are given by $\bff$. Then there is an irreducible module $L(\lambda)$ of highest weight $\lambda$ over the quantised universal enveloping algebra $U_q(\mathfrak{g})$. Let $B(C;\lambda)$ be the Kashiwara crystal attached to $L(\lambda)$ \cite{Kas91}; this can be viewed as a combinatorial skeleton of $L(\lambda)$. Its elements correspond to the vectors of the crystal basis of $L(\lambda)$ \cite{Kas91,Lus91,GL93}.

Following \cite{Sai02,GLS18a}, in \cref{Sect/prerequisitesCrystals}, we endow the following set of irreducible components in the stable locus with a Kashiwara crystal structure:
\[
B_\bff^\st:=\bigsqcup_{\bfr}\mathrm{Irr}(\Lambda_\bfr^\st).
\]
The crystal structure is obtained using the above Hecke correspondences $\Lambda_{\bfr',\bfr}$, which were introduced by Geiss--Leclerc--Schr\"{o}er in \cite{GLS18a}. Our second main result is then: 

\begin{theoremAlph}[\cref{Thm/geometricHWcrystal}]\label{MainThm/crystalConstruction}
There is an isomorphism of crystals $B_\bff^\st\simeq B(C;\lambda)$.
\end{theoremAlph}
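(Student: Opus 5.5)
We follow Saito's strategy \cite{Sai02}, with the symmetric input replaced by the symmetrisable result of Geiss--Leclerc--Schr\"{o}er \cite{GLS18a}. Their theorem gives $B(\infty)$ as the set $B_{\bm{0}}:=\bigsqcup_{\bfr}\mathrm{Irr}(\Lambda_{Q,\bfm,\bfr})$ for the unframed quiver with multiplicities. We first relate components of the framed nilpotent variety $\Lambda_\bfr$ to unframed ones. Forgetting the framing maps gives a projection $\pi\colon\Lambda_\bfr\to\Lambda_{Q,\bfm,\bfr}$. Over the open subset of unframed representations where the framing data can be chosen freely, the fibres are affine bundles, or more precisely vector bundles cut out by the preprojective relation at the framing. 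Hence every $Z\in\mathrm{Irr}(\Lambda_\bfr)$ restricts to an open dense set of the form $\pi^{-1}(Z_0)\cap(\text{framing conditions})$ for a unique $Z_0\in B_{\bm 0}$. We then identify $\mathrm{Irr}(\Lambda^\st_\bfr)$ with the subset of those $Z$ whose generic point is stable. Here stability means that no nonzero subrepresentation lies in the kernel of the framing maps, which is Nakajima-type stability as in \cref{def/stable}.

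The crystal structure on $B_\bff^\st$ is defined through the Hecke correspondences $\Lambda_{\bfr',\bfr}$ of \cref{Rmk/HeckeCorrespondences}. The functions are
\[
\varepsilon_i(Z)=\text{generic codimension of the largest quotient at } i,\qquad
\varphi_i=\varepsilon_i+\langle h_i,\mathrm{wt}\rangle,
\]
and $\tilde e_i,\tilde f_i$ are obtained by transporting generic strata through $p$ and $q$. The next step is to check that the map $Z\mapsto Z_0$ is an injective strict embedding of $B_\bff^\st$ into $B(\infty)\otimes T_\lambda$ that commutes with $\tilde e_i$ and with $\varepsilon_i$. For this we show that stability is preserved by $\tilde e_i$, since removing a top at $i$ keeps injectivity of the framing on subrepresentations. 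We also show that the $i$-socle and $i$-top data are computed by the same Hecke correspondences as in \cite{GLS18a}. We then invoke the Kashiwara--Saito characterisation: the crystal $B(\lambda)$ is the subcrystal $\{b\otimes t_\lambda : \varepsilon_i^*(b)\le\langle h_i,\lambda\rangle\ \forall i\}$ of $B(\infty)\otimes T_\lambda$.

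The remaining and main step is to show that $Z_0$ comes from a stable framed component if and only if $\varepsilon_i^*(Z_0)\le f_i$ for all $i$. Here $\varepsilon_i^*$ is the dual function, computed by the generic dimension of the $i$-socle in \cite{GLS18a} via $*$-Hecke correspondences. The plan is to take the kernel of the framing at vertex $i$, which is an $H_i$-submodule where $H_i=K[\varepsilon]/(\varepsilon^{m_i})$. Stability forces the socle at $i$ to inject into the framing space $H_i^{f_i}$, so its rank is at most $f_i$. Conversely, if the generic socle has rank at most $f_i$, then a generic choice of framing maps makes the induced map from the socle injective. Because the socle and the free-module structure over truncated polynomial rings interact, this needs the locally free condition. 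The obstacle is to verify that ``rank'' for $H_i$-modules matches $\varepsilon_i^*$ exactly and that the dimension count makes the stable locus dense in the corresponding component. We would verify the dimension count using the lagrangian dimension formula of \cite{GLS18a} together with the principal bundle property from \cref{MainThm/quotientConstruction}. Finally, the bijection together with compatibility with $\mathrm{wt},\varepsilon_i,\varphi_i,\tilde e_i$ gives $B_\bff^\st\simeq B(C;\lambda)$.
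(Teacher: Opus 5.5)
Your argument is correct in outline, but it takes a genuinely different route from the paper.

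\textbf{The paper's route.} The paper never uses the $*$-crystal structure. It verifies the hypotheses of Saito's criterion (\cref{Prop/SaitoCriterion}) through \cref{Prop/geometricCriteriaB(lambda)}: the forgetful injection $\iota$ gives a strict surjection $B(C;\infty)\otimes T(\lambda)\sqcup\{0\}\to B^\st_\bff\sqcup\{0\}$. The substantive input is \cref{Prop/crystalOperatorRange}. There, injectivity of $M_{i,\mathrm{out}}$ together with $\widetilde{M}_{i,\mathrm{in}}\circ M_{i,\mathrm{out}}=0$ forces $l\le c+\langle h_i,\lambda-\sum_j r_j\alpha_j\rangle$. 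Explicit Hecke modifications then realise the whole range while preserving stability, which yields $\phi_i(b)=\max\{k\mid f_i^k(b)\neq 0\}$.

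\textbf{Your route.} You identify $\iota(B^\st_\bff)$ with Kashiwara's set $\{b\otimes t(\lambda)\mid \varepsilon_i^*(b)\le\langle h_i,\lambda\rangle\ \forall i\}$. This set must carry restricted operators: $f_i$ never vanishes on $B(C;\infty)\otimes T(\lambda)$, so the embedding is not strict in the usual sense. Your key step is sound and short. Once $s=0$, you have $M_{i,\mathrm{out}}=(M^0_{i,\mathrm{out}},t_i)$. By \cref{Lem/stabilityCriterion} and \cref{Lem/nilpotentStabilityCondition}, a point of $Z_0\times\prod_i\Hom_{k_{m_i}}(k_{m_i}^{\oplus r_i},k_{m_i}^{\oplus f_i})$ is then stable iff every $t_i$ is injective on the free module $\sub_i(M^0)$. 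Free $k_{m_i}$-modules are injective, so $\sub_i(M^0)$ splits off and such $t_i$ exist iff its rank is at most $f_i$.

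\textbf{Comparison.} Your approach gives a clean description of $B^\st_\bff$ inside $B(C;\infty)$, exactly as in the Kashiwara--Saito/Nakajima picture. Its cost is input the paper avoids. You must cite from \cite{GLS18a} that Kashiwara's involution is realised by duality of modules, so that $\varepsilon_i^*(Z_0)$ is the generic rank of $\sub_i$. This is part of their verification of the Kashiwara--Saito characterisation of $B(\infty)$, but it is not recalled in this paper. You also need Kashiwara's $\varepsilon^*$-description of $B(\lambda)$. Once these are cited, the ``obstacle'' you list disappears, since $\sub_i$ is free on the crystal locus.

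\textbf{Corrections.}
\begin{enumerate}
\item Your description of an arbitrary $Z\in\mathrm{Irr}(\Lambda_\bfr)$ via $\pi$ is not usable. Some framed crystal components have $s\neq 0$ generically; for one vertex with $m_1=r_1=f_1=1$, both $\{s=0\}$ and $\{t=0\}$ map to the same $Z_0$, and $\fac_1$ differs on them. The missing ingredient is \cref{Lem/framingVanishing}: stable E-filtered points have $s=0$. This is what puts $(x,y)$ in $\Lambda_{Q,\bfm}$ and identifies a stable component with $Z_0\times\prod_i\Hom(\dots)\subset\tilde{\Lambda}$, so $Z\mapsto Z_0$ is injective. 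It also gives $\fac_i(M)=\fac_i(M^0)$, so $\varepsilon_i$ is preserved.
\item No dimension count, and no use of \cref{MainThm/quotientConstruction}, is needed for density. Stability is open by \cref{Lem/opennessStability}, so one stable point in the irreducible set $Z_0\times\prod_i\Hom(\dots)$ suffices.
\item Compatibility of $\iota$ with the restricted operators needs two things: the intertwining of $f_i$ through the forgetful diagram, as in \cref{Lem/forgetfulCrystalMap}, and your observation that $e_i$ preserves stability.
\end{enumerate}
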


Our proof follows the strategy of \cite{Sai02}. By forgetting the framing arrows of $Q_\bff$, we obtain a morphism of crystals relating $B_\bff^\st$ and the geometric realisation of the crystal of $U_q^-(\mathfrak{g})$ built by Geiss--Leclerc--Schr\"{o}er in \cite{GLS18a}. We then show that, under this morphism, our stability condition selects precisely the irreducible components of $\Lambda_\bfr$ corresponding to the crystal basis vectors of $U_q^-(\mathfrak{g})$ which are mapped to $B(C;\lambda)$ via the homomorphism $U_q^-(\mathfrak{g})\twoheadrightarrow L(\lambda)$. The latter statement is proved in \cref{Sect/GeometricHWCrystals}.

Since the irreducible components of nilpotent quiver varieties with multiplicities match the crystals of the representations $L(\lambda)$, it is natural to ask whether one can construct the representations themselves from the Borel--Moore homology of quiver varieties in the symmetrisable case, as Nakajima did in the symmetric case \cite{Nak98}. Some difficulties arise in the case of quivers with multiplicities due to the lack of a convenient proper pushforward map in homology. We plan to return to this question in future works.

\paragraph*{Relation to other works}

Varagnolo--Vasserot and Cao--Okounkov--Zhou--Zhou construct certain representations of non-symmetric (shifted) quantum loop groups and Yangians from critical quiver varieties attached to a tripled quiver with potential and a stability condition from reductive GIT \cite{VV23b,COZZ26a}. Cyclic derivatives of this potential yield the defining equations of our nilpotent quiver varieties, as noted in \cite{GLS17a}. However, there seems to be no straightforward relation between our quiver varieties (nor between the topological invariants we study). Indeed, these authors use critical K-theory (or cohomology), whereas our approach is related to usual (top-degree) Borel--Moore homology and those do not relate by dimensional reduction in the case of quivers with multiplicities (see \cite{Dav17,Pad22a,VV22} for comparison in the case of quivers without multiplicities). Moreover, working with locally free representations amounts to prescribing certain Jordan types for loop arrows in the triple quiver from \cite{VV23b,COZZ26a}, which seems at odds with usual GIT techniques.

Let us also mention that there is an analogue of nilpotent quiver varieties in the setting of Higgs bundles on a smooth projective curve: the global nilpotent cone \cite{Lau88}. Irreducible components of the global nilpotent cone and Hecke operators also play an important r\^{o}le in that setting: for instance in the study of CoHAs of Higgs sheaves \cite{Min20,SS20} and in mirror symmetry \cite{HH22}.
Spaces of Hecke modifications can be viewed as non-reductive quotients in that setup as well, as in the forthcoming thesis \cite{Mar26}. In this work, the author studies the nilpotent cone in moduli spaces of Higgs bundles, by considering Hecke modifications of rank two vector bundles on a smooth projective curve, along a suitably chosen non-reduced divisor.

\paragraph*{Notation and conventions}
Throughout the paper, we work over an algebraically closed base field $k$ of arbitrary characteristic. Given a scheme $X$, we call $\mathrm{Irr}(X)$ the set of irreducible components of $X$. Given an action of an algebraic group $G$ on a scheme $X$, we say that $X$ admits a Zariski-locally trivial $G$-quotient if it admits a categorical quotient (in the category of schemes) such that the quotient map is a Zariski-locally trivial principal $G$-bundle. General definitions in Sections \ref{Sect/QuivMultDefinitions}-\ref{Sect/framedQuivRep} are given for arbitrary quivers $Q$. From \cref{Sect/CrystalComponents} on, we always assume that $Q$ has no loop arrows.

\paragraph*{Acknowledgements}
We would like to thank Olivier Schiffmann, Michela Varagnolo and \'{E}ric Vasserot for helpful discussions on the topics of the paper. T.V. was supported by the European Union’s Horizon 2020 research and innovation programme under the Marie Skłodowska-Curie Grant Agreement No. 101034413. He thanks Tam\'{a}s Hausel for his support. 

\section{Prerequisites on quiver moduli}\label{Sect/prerequisitesQuiverModuli}

In this section, we gather background definitions and results concerning representations of quivers with multiplicities, framings and E-filtered representations of the associated preprojective algebras, following \cite{GLS18a,HJV25}.

\subsection{Quiver with multiplicities and moment maps}\label{Sect/QuivMultDefinitions}

We recall basic definitions concerning quivers with multiplicities and the associated moment maps. We refer to \cite{GLS17a,Ver24b,HJV25} for a more detailed discussion.

Let $Q$ be a quiver with vertex set $Q_0$ and arrow set $Q_1$. We will call $s,t:Q_1\rightarrow Q_0$ the source and target maps. We fix a collection of positive integers $\bfm=(m_i)_{i\in Q_0}$, called multiplicities, and refer to $(Q,\bfm)$ as a quiver with multiplicities.



\begin{notation}\label{notation gcd of multiplicities}
For $m \in \mathbb{Z}_{>0}$, let $k_m:=k[\epsilon]/(\epsilon^m)$. For multiplicities $\bfm=(m_i)_{i\in Q_0}$, we introduce the following notation (consistent with \cite{GLS17a}) for $i,j\in Q_0$:
\begin{itemize}
    \item $m_{ij}:=\gcd(m_i,m_j)$,
    \item $f_{ji}:=\frac{m_i}{m_{ij}}$.
\end{itemize}
We view  $k_{m_i}$ as a $k_{m_{ij}}$-algebra via the $k$-algebra homomorphism $k_{m_{ij}} \rightarrow k_{m_i}$ given by $\epsilon\mapsto\epsilon^{f_{ji}}$.
For an arrow $a \colon i\rightarrow j$ in $Q$, we define the following $k_{m_i}-k_{m_j}$-bimodule:
\[
k_a:=k_{m_i}\otimes_{k_{m_{ij}}}k_{m_j}.
\]
\end{notation}

\begin{definition}\label{def/quiver rep with mult}
A representation of $(Q,\bfm)$ is a tuple $M = (M_i, i \in Q_0;M_a, a \in Q_1)$ where $M_i$ are $k_{m_i}$-modules and for $a \colon i \rightarrow j$, the map $M_a \colon M_{i}\rightarrow M_{j}$ is $k_{m_{ij}}$-linear. We say $M$ is locally free and finitely generated if the $k_{m_i}$-module $M_i$ is free and finitely generated for all $i \in Q_0$. In this case, the rank vector of $M$ is $\rk(M):=(\rk M_i)_{i\in Q_0}$.

A morphism $f \colon M\rightarrow M'$ between representations of $(Q,\bfm)$ is a collection of $k_{m_i}$-linear maps $f_i \colon M_i\rightarrow M'_i$ which commute with the arrow morphisms.
\end{definition}

In this paper, we will mostly consider locally free representations of finite rank. For $i\in Q_0$, we let $\bfalpha_i\in\bbZ^{Q_0}$ be the unit coordinate rank vector concentrated at vertex $i$.

\begin{definition}\label{Def action on rep space}
For a rank vector $\bfr\in\bbZ_{\geq0}^{Q_0}$, the representation space of $(Q,\bfm)$ with rank $\bfr$ is the following affine space over $k$:
\[
R(Q,\bfm;\bfr):=\prod_{a\in Q_1}\Hom_{k_{m_{ij}}}\left(k_{m_i}^{\oplus r_i},k_{m_j}^{\oplus r_j}\right).
\]
This space is endowed with a conjugation action of the following algebraic group:
\[
\GL_{\bfm,\bfr}:=\prod_{i\in Q_0}\GL_{m_i,r_i},
\]
where $\GL_{m,r}:= \GL_r(k_{m})$ is the automorphism group of $k_m^{\oplus r}$. 
\end{definition}

This action induces a Hamiltonian $\GL_{\bfm,\bfr}$-action on the cotangent bundle 
with moment map $\mu_{(Q,\bfm),\bfr} \colon \rmT^*R(Q,\bfm;\bfr)\simeq R(Q,\bfm;\bfr)\oplus R(Q,\bfm;\bfr)^\vee\rightarrow\gl_{\bfm,\bfr}^{\vee}$, which is characterised as follows:
\[
\forall (x,y,\xi)\in \mathrm{T}^*R(Q,\bfm;\bfr)\times\gl_{\bfm,\bfr},\ \langle\xi\cdot x,y\rangle=\langle\xi,\mu_{(Q,\bfm),\bfr}(x,y)\rangle
.
\]
Let $\overline{Q}$ be the doubled quiver associated to $Q$. In order to explicitly describe the moment map, we identify $\gl_{\bfm,\bfr}^{\vee}\cong \gl_{\bfm,\bfr}$ and $\mathrm{T}^*R(Q,\bfm;\bfr)\cong R(\overline{Q},\bfm;\bfr)$ using the trace pairing (see \cite[\S 8]{GLS17a}\cite[\S 1.2]{Ver24b}\cite[\S 2.6]{HJV25}).

\begin{proposition}
 Under the above identifications, the moment map is described as follows. For $(x,y)\in R(\overline{Q},\bfm;\bfr)$:
\[
\mu_{(Q,\bfm),\bfr}(x,y)=
\left(
\sum_{\substack{a\in Q_1 \\ a \colon j\rightarrow i}}
\sum_{f=0}^{f_{ji}-1}
\epsilon^{f}x_ay_a\epsilon^{f_{ji}-1-f}
-
\sum_{\substack{a\in Q_1 \\ a \colon i\rightarrow j}}
\sum_{f=0}^{f_{ji}-1}
\epsilon^{f}y_ax_a\epsilon^{f_{ji}-1-f}
\right)_{i\in Q_0}
.
\]
\end{proposition}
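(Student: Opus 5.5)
The formula comes from computing the infinitesimal action and then transposing it through the trace pairings. The moment map is determined by $\langle\xi\cdot x,y\rangle=\langle\xi,\mu(x,y)\rangle$, and both sides are additive over arrows. It therefore suffices to treat a single arrow $a\colon i\to j$ and compute its contribution to the components $\mu_i$ and $\mu_j$.

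\textbf{Step 1: fix the pairings.} I would first make explicit the identifications the statement relies on.
\begin{itemize}
\item On $\gl_{m,r}=\mathfrak{gl}_r(k_m)$ the pairing is $\langle\xi,\eta\rangle=\operatorname{res}\operatorname{tr}(\xi\eta)$, where $\operatorname{res}$ takes the coefficient of $\epsilon^{m-1}$. This pairing is nondegenerate.
\item For $x_a\in\Hom_{k_{m_{ij}}}(k_{m_i}^{r_i},k_{m_j}^{r_j})$ and $y_a\in\Hom_{k_{m_{ij}}}(k_{m_j}^{r_j},k_{m_i}^{r_i})$, as in \cite{GLS17a,HJV25}, the pairing is $\langle x_a,y_a\rangle=\operatorname{res}_{m_{ij}}\operatorname{tr}_{k_{m_{ij}}}(x_ay_a)$.
\item Here we regard $k_{m_i}^{r_i}$ as a free $k_{m_{ij}}$-module of rank $r_if_{ji}$ with basis $\epsilon^f e_p$ for $0\le f<f_{ji}$. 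In this picture, $\epsilon\in k_{m_i}$ acts as a nilpotent shift $S$ with $S^{f_{ji}}=\epsilon_{m_{ij}}$.
\end{itemize}

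\textbf{Step 2: the key trace identity.} The central computation relates the two kinds of trace. For a $k_{m_{ij}}$-linear endomorphism $\phi$ of $k_{m_i}^{r_i}$ and $\xi\in\gl_r(k_{m_i})$, I would show
\[
\operatorname{res}_{m_{ij}}\operatorname{tr}_{k_{m_{ij}}}(\xi\phi)=\operatorname{res}_{m_i}\operatorname{tr}_{k_{m_i}}\Bigl(\xi\sum_{f=0}^{f_{ji}-1}\epsilon^{f}\phi\,\epsilon^{f_{ji}-1-f}\Bigr).
\]
The proof reduces to $r_i=1$ and a monomial $\xi=\epsilon^s$. Writing $\phi$ as an $f_{ji}\times f_{ji}$ matrix over $k_{m_{ij}}$, the symmetrisation $\sum_f\epsilon^f\phi\,\epsilon^{f_{ji}-1-f}$ is exactly the $k_{m_i}$-linear projection of $\phi$ that is dual to the inclusion $\gl(k_{m_i})\subset\mathrm{End}_{k_{m_{ij}}}$. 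This is the analogue of the trace form for the extension $k_{m_{ij}}\subset k_{m_i}$, whose dual basis relative to $\operatorname{res}$ is $\{\epsilon^f\}$ paired with $\{\epsilon^{f_{ji}-1-f}\}$.

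\textbf{Step 3: assemble.} The infinitesimal action is $\xi\cdot x_a=\xi_jx_a-x_a\xi_i$. Pairing with $y_a$ gives two terms:
\begin{itemize}
\item $\operatorname{tr}(\xi_jx_ay_a)$, which contributes to vertex $j$ (the arrow $a$ enters $j$);
\item $-\operatorname{tr}(x_a\xi_iy_a)=-\operatorname{tr}(\xi_iy_ax_a)$ by cyclicity of $\operatorname{tr}_{k_{m_{ij}}}$, which contributes to vertex $i$ (the arrow $a$ leaves $i$).
\end{itemize}
Applying Step 2 at the appropriate vertex gives the symmetrised expressions. Summing over arrows then yields the stated formula, with plus signs for incoming arrows and minus signs for outgoing ones.

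\textbf{Main obstacle.} I expect the hardest part to be Step 2. It needs careful bookkeeping of the residue normalisations, so that the exponents come out as $f$ and $f_{ji}-1-f$ with no scalar factor. I would check it directly on the basis $\epsilon^fe_p$, using that $\operatorname{tr}_{k_{m_{ij}}}(\epsilon^s\cdot)$ picks out the diagonal blocks shifted by $s$.
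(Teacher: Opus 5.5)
Your computation is correct and is the standard derivation. The paper gives no proof of its own, only citing \cite[\S 8]{GLS17a}, \cite[\S 1.2]{Ver24b}, \cite[\S 2.6]{HJV25}. Your pairing $\mathrm{res}_{m_{ij}}\mathrm{tr}_{k_{m_{ij}}}$ on arrow spaces, together with the fact that $\mathrm{res}_{m_i}$ is $\mathrm{res}_{m_{ij}}$ composed with extracting the coefficient of $\epsilon^{f_{ji}-1}$ (so $\{\epsilon^f\}$ and $\{\epsilon^{f_{ji}-1-f}\}$ are dual bases), reproduces the stated formula with no stray scalars; it is also consistent with $\mu(A,B)=(-BA[\epsilon],AB)$ in \cref{Exmp/NonMaxComponentsTypeC2}.

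One wording fix in Step 2: the symmetrisation $\phi\mapsto\sum_f\epsilon^f\phi\,\epsilon^{f_{ji}-1-f}$ is the adjoint of the inclusion $\gl_r(k_{m_i})\subset\mathrm{End}_{k_{m_{ij}}}(k_{m_i}^{\oplus r})$, not a projection, since it sends the identity to $f_{ji}\epsilon^{f_{ji}-1}$. The trace identity itself holds exactly as you state it.
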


\subsection{Framed quiver representations with multiplicities}\label{Sect/framedQuivRep}

From the perspective of geometric invariant theory, it is useful to introduce a framing to eliminate a global stabiliser from the action and construct moduli spaces. In a representation theoretic context, the framing will correspond to the highest weight vector, and so also plays an important r\^{o}le.

\begin{definition}
An $\bff$-framed representation of $(Q,\bfm)$ is a pair $(M,s)$ consisting of a representation $M$ of $(Q,\bfm)$ and framing data $s$ consisting of $k_{m_i}$-linear maps $s_{i}\colon k_{m_i}^{\oplus f_i}\rightarrow M_i$ for each $i \in Q_0$. We say $(M,s)$ is locally free (resp.\ finitely generated) if $M$ is, and refer to $\rk M$ as the rank vector of $(M,s)$.

A morphism $h \colon (M,s)\rightarrow (M',s')$ of $\bff$-framed representations of $(Q,\bfm)$ is a morphism $h \colon M \rightarrow M'$ of $(Q,\bfm)$-representations that is compatible with the framing data in the sense that for all $i\in Q_0$, the following diagram commutes:
\[
\begin{tikzcd}
k_{m_i}^{\oplus f_i} \ar[r,"s_i"]\ar[d,equal] & M_{i} \ar[d,"h_i"] \\
k_{m_i}^{\oplus f_i} \ar[r,"s_i'"] & M'_{i}.
\end{tikzcd}
\]
\end{definition}

One can describe moduli of framed representations by adding an additional framing vertex and $f_i$ arrows from the framing vertex to each vertex $i$ as follows (see \cite{CB01}\cite[\S 2.3.2]{HJV25}).

\begin{definition}\label{Def quiver with mult associated to framing}
Given $(Q,\bfm)$ and framing vector $\bff$ and rank vector $\bfr \in \mathbb{N}^{Q_0}$, we define the associated framed quiver with multiplicities $(Q_\bff,\widehat{\bfm})$ and rank vector $\widehat{\bfr}$ for $Q_\bff$ by:
\begin{itemize}
\item $(Q_\bff)_0:=Q_0\sqcup\{\infty\}$ and  $(Q_\bff)_1:=Q_1\sqcup\bigsqcup_{i\in Q_0}\{b_{i,f} \colon \infty\rightarrow i,\ 1\leq f\leq f_i\}$,
\item $\widehat{m}_i:=
\left\{
\begin{array}{ll}
m_i & i\in Q_0, \\
1 & i=\infty.
\end{array}
\right.
$
\item $
\widehat{r}_i=
\left\{
\begin{array}{ll}
r_i & i\in Q_0, \\
1 & i=\infty.
\end{array}
\right.
$
\end{itemize}
\end{definition}

This paper primarily concerns the action of $\GL_{\bfm,\bfr}$ on $\mathrm{T}^*R({Q_\bff},\widehat{\bfm};\widehat{\bfr}) \cong R(\overline{Q_\bff},\widehat{\bfm};\widehat{\bfr})$, where $\overline{Q_\bff}$ is the doubled quiver obtained by adding an opposite arrow for each arrow in ${Q_\bff}$. We decompose this space as follows:
\[R(\overline{Q_\bff},\widehat{\bfm};\widehat{\bfr}) = R(Q,\bfm;\bfr) \times R(Q^{\mathrm{op}},\bfm;\bfr) \times \prod_{i \in Q_0} \! \! \left(\Hom_{k_{m_i}}(k^{\oplus f_i}_{m_i},k^{\oplus r_i}_{m_i}) \times \Hom_{k_{m_i}}(k^{\oplus r_i}_{m_i},k_{m_i}^{\oplus f_i}) \right) \]
and write points as $(x,y,(s_i,t_i)_{i \in Q_0})$. We also write $s = (s_i)_{i \in Q_0}$ and $t = (t_i)_{i \in Q_0}$.

\begin{notation}
Given a closed point $(x,y,s,t)\in R(\overline{Q_\bff},\widehat{\bfm};\widehat{\bfr})$ (resp.\ $(x,y)\in R(\overline{Q},\bfm;\bfr)$), we let $M(x,y,s,t)$ (resp.\ $M(x,y)$) denote the corresponding representation of $(\overline{Q_\bff},\widehat{\bfm})$ (resp.\ $(\overline{Q},\bfm)$).
\end{notation}

\subsection{Quivers with multiplicities as modulated graphs}

Let $(Q,\bfm)$ be a quiver with multiplicities. In this section, we recall how representations of quivers with multiplicities can be interpreted as representations of certain modulated graphs, following \cite[\S 5]{GLS17a}. We will use this language throughout the paper, in order to describe Hecke modifications of nilpotent quiver representations  of $(Q,\bfm)$ at a vertex (see \cref{Sect/CrystalComponents}).

Recall the bimodules $k_a,\ a\in Q_1$ from \cref{notation gcd of multiplicities}. Together with the rings $k_{m_i}$ for $ i\in Q_0$, these bimodules yield a modulation of the graph underlying $(Q,\bfm)$, as explained in \cite[\S 5]{GLS17a}.

\begin{lemma}\label{Lem/balancingBimodules}
\emph{\cite[Prop.\ 5.1]{GLS17a}} Let $a \colon i \rightarrow j$ be an arrow in $Q$. Given a $k_{m_i}$-module $M_i$ and a $k_{m_j}$-module $M_j$, there are functorial isomorphisms of $k$-vector spaces:
\[
\Hom_{k_{m_i}}(M_i,k_a\otimes_{k_{m_j}}M_j)
\simeq
\Hom_{k_{m_{ij}}}(M_i,M_j)
\simeq
\Hom_{k_{m_j}}(M_i\otimes_{k_{m_i}}k_a,M_j)
.
\]
\end{lemma}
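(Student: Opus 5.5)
The two isomorphisms come from two standard adjunctions: extension of scalars along $k_{m_{ij}}\to k_{m_j}$ for the right-hand one, and a Frobenius-type duality for the finite extension $k_{m_{ij}}\to k_{m_i}$ for the left-hand one.

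\textbf{Right-hand isomorphism.} We have
\[
M_i\otimes_{k_{m_i}}k_a
= M_i\otimes_{k_{m_i}}\bigl(k_{m_i}\otimes_{k_{m_{ij}}}k_{m_j}\bigr)
\simeq M_i\otimes_{k_{m_{ij}}}k_{m_j}.
\]
This is the extension of scalars of $M_i$, viewed as a $k_{m_{ij}}$-module, along $k_{m_{ij}}\to k_{m_j}$, $\epsilon\mapsto\epsilon^{f_{ij}}$. Extension of scalars is left adjoint to restriction, so
\[
\Hom_{k_{m_j}}(M_i\otimes_{k_{m_{ij}}}k_{m_j},M_j)\simeq\Hom_{k_{m_{ij}}}(M_i,M_j),
\qquad \phi\mapsto\bigl(v\mapsto\phi(v\otimes 1)\bigr).
\]
This map is natural in both $M_i$ and $M_j$.

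\textbf{Left-hand isomorphism.} Similarly,
\[
k_a\otimes_{k_{m_j}}M_j\simeq k_{m_i}\otimes_{k_{m_{ij}}}M_j,
\]
the extension of scalars of $M_j$ along $k_{m_{ij}}\to k_{m_i}$. Here we need the right adjoint rather than the left one. Tensor induction is right adjoint to restriction when the extension is Frobenius, so the key step is to show that $k_{m_i}$ is a free $k_{m_{ij}}$-module and a Frobenius extension:

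\begin{itemize}
\item Freeness: $k_{m_i}$ is free over $k_{m_{ij}}$ with basis $1,\epsilon,\dots,\epsilon^{f_{ji}-1}$, because $\epsilon^{f_{ji}}$ is the image of the generator and $m_i=m_{ij}f_{ji}$.
\item Frobenius form: take the $k_{m_{ij}}$-linear map $\tau:k_{m_i}\to k_{m_{ij}}$ that extracts the coefficient of $\epsilon^{f_{ji}-1}$ in the basis above. The bilinear form $(a,b)\mapsto\tau(ab)$ is nondegenerate, with dual bases $\{\epsilon^f\}$ and $\{\epsilon^{f_{ji}-1-f}\}$.
\end{itemize}

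This gives an isomorphism of $k_{m_i}$-$k_{m_{ij}}$-bimodules $k_{m_i}\simeq\Hom_{k_{m_{ij}}}(k_{m_i},k_{m_{ij}})$. Hence, for a $k_{m_{ij}}$-module $N$,
\[
k_{m_i}\otimes_{k_{m_{ij}}}N\simeq\Hom_{k_{m_{ij}}}(k_{m_i},N),
\]
using that $k_{m_i}$ is finite free over $k_{m_{ij}}$. Coinduction is right adjoint to restriction, so
\[
\Hom_{k_{m_i}}\bigl(M_i,\Hom_{k_{m_{ij}}}(k_{m_i},M_j)\bigr)\simeq\Hom_{k_{m_{ij}}}(M_i,M_j).
\]
Composing these yields the left-hand isomorphism. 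Explicitly, $g\in\Hom_{k_{m_{ij}}}(M_i,M_j)$ is sent to
\[
v\mapsto\sum_{f=0}^{f_{ji}-1}\epsilon^{f_{ji}-1-f}\otimes g(\epsilon^f v).
\]
This formula visibly matches the sum appearing in the moment map. Naturality in both variables follows from the naturality of the adjunctions.

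\textbf{Main obstacle.} The only delicate point is checking that this explicit map is $k_{m_i}$-linear, i.e.\ that the identification $k_{m_i}\simeq\Hom_{k_{m_{ij}}}(k_{m_i},k_{m_{ij}})$ is a bimodule map. Concretely, for $\epsilon\cdot$, the relation $\epsilon\cdot\epsilon^{f_{ji}-1}=\epsilon^{f_{ji}}$ must correspond, via $\epsilon^{f_{ji}}\mapsto\epsilon$ in $k_{m_{ij}}$, to the wrap-around term. I would check this directly on the basis.
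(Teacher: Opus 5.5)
Your argument is correct. Note that the paper gives no proof of this lemma: it imports it from \cite[Prop.\ 5.1]{GLS17a}, so there is no internal argument to match, and your proposal supplies a self-contained one. The right-hand isomorphism is the extension-of-scalars/restriction adjunction along $k_{m_{ij}}\to k_{m_j}$. For the left-hand one you correctly isolate the only real input: $k_{m_{ij}}\to k_{m_i}$, $\epsilon\mapsto\epsilon^{f_{ji}}$, is a free Frobenius extension. Hence induction agrees with coinduction and is therefore also right adjoint to restriction. Compared with a bare citation, your route gives the explicit map $g\mapsto\bigl(v\mapsto\sum_{f}\epsilon^{f_{ji}-1-f}\otimes g(\epsilon^f v)\bigr)$. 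This explains where the sums $\sum_f\epsilon^f(\cdot)\epsilon^{f_{ji}-1-f}$ in the paper's moment map formula come from.

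One correction of emphasis. The step you flag as the main obstacle is automatic: the map $a\mapsto\tau(a\,\cdot\,)$ from $k_{m_i}$ to $\Hom_{k_{m_{ij}}}(k_{m_i},k_{m_{ij}})$ is $k_{m_i}$-linear simply because $k_{m_i}$ is commutative, since $\tau(ba\,x)=\tau(a\,xb)$. The wrap-around genuinely enters in the nondegeneracy claim, which you state without checking. For $0\le f,g\le f_{ji}-1$ with $f+g\ge f_{ji}$, one has $\epsilon^{f+g}=\epsilon^{f+g-f_{ji}}\cdot\epsilon^{f_{ji}}$ with $f+g-f_{ji}\le f_{ji}-2$, so its $\epsilon^{f_{ji}-1}$-coordinate vanishes. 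Hence $\tau(\epsilon^f\epsilon^g)=\delta_{f+g,f_{ji}-1}$ for all $f,g$. The Gram matrix over $k_{m_{ij}}$ is therefore the antidiagonal identity, and $a\mapsto\tau(a\,\cdot\,)$ is an isomorphism of $k_{m_{ij}}$-modules. With that line added, the proof is complete.
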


The following maps are key to formulate modifications of representations of $(Q,\bfm)$ at a vertex in Geiss--Leclerc--Schr\"{o}er's work (known as reflection functors or Hecke modifications, see \cite{GLS17a,GLS18a}).

\begin{definition}\label{def/vertexKernelsCokernels}
Let $M$ be a representation of $(Q,\bfm)$ and $i\in Q_0$. Define the following maps:
\[
M_{i,\mathrm{out}} \colon M_i\overset{M_a}{\longrightarrow}\bigoplus_{a:i\rightarrow j}k_a\otimes_{k_{m_{ij}}}M_j
,
\]
\[
M_{i,\mathrm{in}} \colon \bigoplus_{a \colon j\rightarrow i}M_j\otimes_{k_{m_{ij}}}k_a\overset{M_a}{\longrightarrow} M_i
,
\]
which are induced from $M$ using \cref{Lem/balancingBimodules}.
We further define the $k_{m_i}$-modules:
\[
\sub_i(M):=\Ker(M_{i,\mathrm{out}}), \quad \fac_i(M):=\Coker(M_{i,\mathrm{in}})
.
\]
Finally, let $C_i(M)$ (resp.\ $K_i(M)$) denote the smallest quotient $M\twoheadrightarrow M''$ (resp.\ the smallest subrepresentation $M'\subset M$) such that $\Ker(M\twoheadrightarrow M'')$ (resp.\ $M/M'$) is concentrated at vertex $i$.
\end{definition}

\begin{notation}
We will sometimes denote by $\sub_i(M)$ (resp.\ $\fac_i(M)$) the representation $N$ of $(Q,\bfm)$ concentrated at vertex $i$, with $N_i=\sub_i(M)$ (resp.\ $N_i=\fac_i(M)$).
\end{notation}

The following lemma is straightforward. A similar statement can be found in \cite[\S 4.2]{GLS18a} for representations of preprojective algebras of quivers with multiplicities.

\begin{lemma}\label{Lem/propertiesK_iC_i}
Let $M$ be a representation of $(Q,\bfm)$ and $i\in Q_0$. Then there are short exact sequences of representations of $(Q,\bfm)$:
\[
0\rightarrow \sub_i(M)\rightarrow M\rightarrow C_i(M)\rightarrow 0
\]
\[
\emph{(resp.\ }
0\rightarrow K_i(M)\rightarrow M\rightarrow \fac_i(M)\rightarrow 0
\emph{).}
\]
\end{lemma}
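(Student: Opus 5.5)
The plan is to check directly, in the first sequence, that $\sub_i(M)$ is a subrepresentation with $M/\sub_i(M)=C_i(M)$, and then obtain the second sequence by the dual argument.

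\textbf{First sequence.} Regard $\sub_i(M)$ as the representation $N$ concentrated at vertex $i$, with $N_i=\sub_i(M)=\Ker(M_{i,\mathrm{out}})\subset M_i$ and all other spaces zero.

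First, $N$ is a subrepresentation of $M$. It is a $k_{m_i}$-submodule because $M_{i,\mathrm{out}}$ is $k_{m_i}$-linear: the isomorphism of \cref{Lem/balancingBimodules} sends $k_{m_{ij}}$-linear maps to $k_{m_i}$-linear maps into $k_a\otimes M_j$. The arrows leaving $i$ vanish on $N_i$. To see this, note that under \cref{Lem/balancingBimodules} the component $M_a$ of $M_{i,\mathrm{out}}$ vanishes on a $k_{m_i}$-submodule exactly when the original $k_{m_{ij}}$-linear map $M_a$ does, since the isomorphism is natural in $M_i$ and compatible with restriction. The arrows entering $i$ start at vertices where $N$ is zero, so there is nothing to check for them. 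Since $Q$ has no loops, no arrow both starts and ends at $i$. (If loops were allowed, $\sub_i$ would still be stable, since a loop contributes a component of $M_{i,\mathrm{out}}$.)

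Second, $M/N$ is the smallest quotient of $M$ whose kernel is concentrated at $i$. Let $M\twoheadrightarrow M''$ be any quotient with kernel $L$ concentrated at $i$, so $L_i\subset M_i$. For $L$ to be a subrepresentation, every arrow $a\colon i\to j$ with $j\neq i$ must map $L_i$ into $L_j=0$. By \cref{Lem/balancingBimodules} this says exactly that $L_i\subset\Ker(M_{i,\mathrm{out}})$. Hence $L\subset N$, and $M\twoheadrightarrow M''$ factors through $M\twoheadrightarrow M/N$. So $M/N$ is the smallest such quotient, that is, $M/N=C_i(M)$, which gives the first short exact sequence.

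\textbf{Second sequence.} The argument is dual. Let $I\subset M_i$ be the image of $M_{i,\mathrm{in}}$, which is a $k_{m_i}$-submodule. By \cref{Lem/balancingBimodules}, $I$ is the $k_{m_i}$-span of the images of the incoming maps $M_a$. Define $K\subset M$ by $K_j=M_j$ for $j\neq i$ and $K_i=I$.

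$K$ is a subrepresentation: arrows into $i$ land in $I$ by construction, and arrows out of $i$ certainly map into $M_j=K_j$. The quotient $M/K$ is concentrated at $i$ and equals $\fac_i(M)$.

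For minimality, let $M'\subset M$ be any subrepresentation with $M/M'$ concentrated at $i$. Then $M'_j=M_j$ for all $j\neq i$. Since $M'_i$ is a $k_{m_i}$-submodule that is stable under the incoming arrows, it contains their $k_{m_i}$-span, so $M'_i\supset I$. Hence $K\subset M'$, so $K=K_i(M)$, which gives the second short exact sequence.

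\textbf{Main obstacle.} The only real subtlety is translating between $k_{m_{ij}}$-linear maps and the $k_{m_i}$-linear maps $M_{i,\mathrm{out}}$ and $M_{i,\mathrm{in}}$. Specifically, one must check that kernels match, and that images match after taking the $k_{m_i}$-span. I would verify this using the explicit form of the adjunction isomorphism in \cref{Lem/balancingBimodules}, namely $\phi\mapsto\bigl(v\mapsto\sum_f \epsilon^{f}\otimes\phi_f(v)\bigr)$ for a suitable basis of $k_{m_i}$ over $k_{m_{ij}}$. With this formula, injectivity on $\Hom$-spaces, restricted to submodules, gives the kernel statement, and the corresponding formula on the other side gives the image statement.
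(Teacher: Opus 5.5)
Your argument is correct: it is the direct verification that the paper omits (the paper only calls the lemma straightforward, pointing to \cite[\S 4.2]{GLS18a} for an analogous statement), and you invoke the no-loop assumption exactly where it is needed. Two small slips to fix. First, since every admissible kernel $L$ satisfies $L\subset N$, it is $M\twoheadrightarrow M/N$ that factors through $M\twoheadrightarrow M''$, not the reverse. Second, $\Ker(M_{i,\mathrm{out}})$ is in general only the largest $k_{m_i}$-submodule of $\bigcap_a\Ker(M_a)$, so ``kernels match'' in your last paragraph should be read as your criterion about vanishing on $k_{m_i}$-submodules, which is all you actually use.
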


Finally, the maps $M_{i,\mathrm{in}}$ and $M_{i,\mathrm{out}}$ can be used to rewrite moment map equations for quivers with multiplicities in a compact way. We will make use of this formulation in \cref{Sect/GeometricHWCrystals} to analyse the behaviour of stable representations of the preprojective algebra of $(Q,\bfm)$ under Hecke modifications. We first need to twist the map $M_{i,\mathrm{in}}$ by a sign.

\begin{definition}\label{def/signTwistedM_i,in}
Let $(\overline{Q},\bfm)$ be the doubled quiver associated to $(Q,\bfm)$. For $a\in Q_1$, let us define $\mathrm{sgn}(a):=1$ and $\mathrm{sgn}(a^*):=-1$. For $i\in Q_0$, we define the sign-twist of $M_{i,\mathrm{in}}$ by
\[
\widetilde{M}_{i,\mathrm{in}}\colon\bigoplus_{\substack{a:j\rightarrow i \\ a\in\overline{Q}_1}}M_j\otimes_{k_{m_{ij}}}k_a\overset{\mathrm{sgn}(a)M_a}{\longrightarrow} M_i
.
\]
\end{definition}

\begin{lemma}\label{Lem/momentMapEqn}
\emph{\cite[Prop.\ 5.2]{GLS17a}} For $\bfr\in\bbZ_{\geq0}^{Q_0}$ and $(x,y)\in R(\overline{Q},\bfm;\bfr)$, let $M:=M(x,y)$. Then $\mu_{Q,\bfm;\bfr}(x,y)=0$ if and only if  we have $\widetilde{M}_{i,\mathrm{in}}\circ M_{i,\mathrm{out}}=0$ for every $i\in Q_0$.
\end{lemma}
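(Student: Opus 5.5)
The plan is to compute the $i$-th component of the moment map directly, and to compare it with the composite $\widetilde{M}_{i,\mathrm{in}}\circ M_{i,\mathrm{out}}$ after transporting both through the adjunction isomorphisms of \cref{Lem/balancingBimodules}. Since both conditions split over $i\in Q_0$, it suffices to fix a vertex $i$. The goal is to show that, under the identification $\gl_{\bfm,\bfr}^\vee\cong\gl_{\bfm,\bfr}$ given by the trace pairing, the $i$-th component of $\mu_{(Q,\bfm),\bfr}(x,y)$ corresponds bijectively to the $k_{m_i}$-linear endomorphism $\widetilde{M}_{i,\mathrm{in}}\circ M_{i,\mathrm{out}}$ of $M_i$. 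In particular, one should vanish exactly when the other does.

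The key computation concerns one arrow $a\colon i\to j$ of $\overline{Q}$. First, I would make the isomorphism $\Hom_{k_{m_{ij}}}(M_i,M_j)\simeq\Hom_{k_{m_i}}(M_i,k_a\otimes_{k_{m_j}}M_j)$ explicit. Use the $k_{m_{ij}}$-basis $1,\epsilon,\dots,\epsilon^{f_{ji}-1}$ of $k_{m_i}$, so that $k_a\cong\bigoplus_{f}\epsilon^f\otimes k_{m_j}$. A $k_{m_{ij}}$-linear map $\phi$ then corresponds to the $k_{m_i}$-linear map $v\mapsto\sum_{f}\epsilon^{f_{ji}-1-f}\otimes\phi(\epsilon^f v)$, up to the choice of dual basis for the trace form $k_{m_i}\to k_{m_{ij}}$ that picks out the top coefficient. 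Symmetrically, I would describe the incoming map $M_j\otimes k_a\to M_i$ induced by $M_{a^*}$.

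Composing the two maps, the component of $\widetilde{M}_{i,\mathrm{in}}\circ M_{i,\mathrm{out}}$ through $a$ becomes $\mathrm{sgn}(a^*)\sum_{f}\epsilon^{f} M_{a^*}M_a\epsilon^{f_{ji}-1-f}$. Summing over all arrows of $\overline{Q}$ at $i$, and taking the signs from \cref{def/signTwistedM_i,in}, should reproduce exactly the explicit formula of the preceding Proposition. The terms from arrows of $Q$ pointing into $i$ carry sign $+$, and those from arrows of $Q$ leaving $i$ carry sign $-$. The case $f_{ji}=1$ reduces to the usual preprojective relation, which serves as a sanity check.

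The main obstacle is the bookkeeping in the middle step. One has to fix the adjunction isomorphisms of \cref{Lem/balancingBimodules} consistently with the trace-pairing identification used to define $\mu$, so that the exponents $\epsilon^{f}$ and $\epsilon^{f_{ji}-1-f}$ land on the correct sides. I would fix the Frobenius form on $k_{m_i}$ over $k_{m_{ij}}$ (projection to the $\epsilon^{f_{ji}-1}$ coefficient), use its dual basis $\{\epsilon^f\}\leftrightarrow\{\epsilon^{f_{ji}-1-f}\}$ in both adjunctions, and check the required identity on basis elements. Any discrepancy arising from different conventions is an invertible rescaling or sign change on each arrow summand. Such a change does not affect whether the total endomorphism vanishes, provided it is applied uniformly to the terms of each sign. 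I would verify this uniformity carefully, since it is exactly what the sign twist is designed to handle.
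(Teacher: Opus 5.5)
Your proposal is correct and is essentially the direct computation behind \cite[Prop.\ 5.2]{GLS17a}, which the paper cites rather than reproving. With the top-coefficient Frobenius form and dual bases $\epsilon^f\leftrightarrow\epsilon^{f_{ji}-1-f}$ for the outgoing maps, and the canonical extension-of-scalars map for the incoming ones, the composite $\widetilde{M}_{i,\mathrm{in}}\circ M_{i,\mathrm{out}}$ is literally equal to the $i$-th component of $\mu_{(Q,\bfm),\bfr}(x,y)$ given in the preceding Proposition. Your final hedge is therefore unnecessary, and as phrased it is slightly too generous: rescaling different arrow summands, or the two sign classes, by different units of $k_{m_i}$ would in general change the vanishing locus, so only a single global unit is harmless.
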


\subsection{E-filtered representations and crystal components}\label{Sect/CrystalComponents}

Let $(Q,\bfm)$ be a quiver with multiplicities. From this point on, we assume that $Q$ has no loop arrows. In this section, we recall the notion of E-filtered quiver representations with multiplicities from \cite{GLS18a}. Those are the analogues of nilpotent quiver representations for quivers with multiplicities. We then recall the definition of Geiss--Leclerc--Schr\"{o}er's analogue $\mathrm{nil}_E(\Pi_{Q,\bfm})$ of Lusztig's nilpotent variety \cite[\S 12]{Lus91}. Unlike Lusztig's original variety, this one is not equidimensional and we recall the characterisation by Geiss--Leclerc--Schr\"{o}er of the top-dimensional irreducible components of $\mathrm{nil}_E(\Pi_{Q,\bfm})$ in terms of crystal representations of $(\overline{Q},\bfm)$ and how those components can be obtained by Hecke modification from the zero representation, following \cite[\S 4]{GLS18a}.

The following definition of E-filtered representations comes from \cite[\S 2.1]{GLS18a}.

\begin{definition}\label{def/E-filteredRep}
For $i\in Q_0$, let $E_i$ be the locally free representation of $(Q,\bfm)$ given by:
\[
(E_i)_j=
\left\{
\begin{array}{ll}
k_{m_i} & j=i \\
0 & j\ne i
\end{array}
\right.
.
\]
A (locally free) representation $M$ of $(Q,\bfm)$ is E-filtered if there exist subrepresentations
\[
0=M_{n+1}\subsetneq M_n\subsetneq\ldots\subsetneq M_0=M,
\ n\in\bbZ_{\geq0}
\]
such that, for all $0\leq l\leq n$, there exists $i_l\in Q_0$ such that $M_l/M_{l+1}\simeq E_{i_l}$.
\end{definition}

An E-filtered representation is necessarily locally free, as it is an extension of locally free representations, namely of the representations $E_i$ for $i\in Q_0$. By construction, any E-filtered representation of $(Q,\bfm)$ can be obtained from a representation of smaller rank by Hecke modification.

\begin{lemma}\label{Lem/facSubE-filteredRep}
\emph{\cite[Lem.\ 3.1]{GLS18a}} Let $M$ be an E-filtered representation of $(Q,\bfm)$. Then there exists $i\in Q_0$ such that the $k_{m_i}$-module $\fac_i(M)$ (resp.\ $\sub_i(M)$) contains a non-zero free direct summand.
\end{lemma}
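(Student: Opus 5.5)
We prove the statement for $\fac_i(M)$; the case of $\sub_i(M)$ is dual, obtained by reversing the filtration. We use the E-filtration directly. Assume $M\neq 0$; the statement is vacuous or trivial otherwise. Take a filtration $0=M_{n+1}\subsetneq M_n\subsetneq\ldots\subsetneq M_0=M$ as in \cref{def/E-filteredRep}, and let $i:=i_0$, so that $M/M_1\simeq E_i$. We claim that $\fac_i(M)$ surjects onto $k_{m_i}$, and then conclude.

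\textbf{Step 1: the surjection.} The quotient $\pi\colon M\twoheadrightarrow M/M_1\simeq E_i$ is a morphism of representations. The target $E_i$ is concentrated at $i$, and $Q$ has no loops, so every arrow $a\colon j\rightarrow i$ has $j\neq i$. Hence $\pi_i\circ M_a$ factors through $(E_i)_j=0$, so $\pi_i\circ M_a=0$. It follows that $\pi_i$ vanishes on the image of $M_{i,\mathrm{in}}$ and factors through $\fac_i(M)=\Coker(M_{i,\mathrm{in}})$. This gives a surjection of $k_{m_i}$-modules $\fac_i(M)\twoheadrightarrow k_{m_i}$. Equivalently, by \cref{Lem/propertiesK_iC_i}, $M_1\supseteq K_i(M)$ by minimality of $K_i(M)$. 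The surjection is $k_{m_i}$-linear because $M\to E_i$ is a morphism of representations.

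\textbf{Step 2: splitting.} Since $k_{m_i}$ is a free module over itself, the surjection $\fac_i(M)\twoheadrightarrow k_{m_i}$ splits. Therefore $\fac_i(M)\simeq k_{m_i}\oplus N$ for some $k_{m_i}$-module $N$, which is the required non-zero free direct summand.

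\textbf{Dual case.} For $\sub_i(M)$, take $i:=i_n$, so that $M_n\simeq E_i$ is a subrepresentation. At any vertex $j\neq i$ we have $(M_n)_j=0$. Since $Q$ has no loops, every arrow $a\colon i\rightarrow j$ has $j\neq i$, so $M_a$ restricted to $(M_n)_i$ lands in $(M_n)_j=0$. Hence $(M_n)_i\simeq k_{m_i}$ is contained in $\sub_i(M)=\Ker(M_{i,\mathrm{out}})$. Because $k_{m_i}$ is self-injective (a Frobenius algebra), the free submodule $k_{m_i}\hookrightarrow\sub_i(M)$ splits, giving a free direct summand.

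The only point needing care is this last splitting: one must use the injectivity of $k_{m_i}$ over itself, or dualise via $\Hom_k(-,k)$, which exchanges the two cases. Apart from that, the argument is just the no-loop observation applied to the top and bottom terms of the E-filtration.
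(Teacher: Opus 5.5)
Your proof is correct. The top quotient $M\twoheadrightarrow E_{i_0}$ kills $\Ima(M_{i_0,\mathrm{in}})$, so it induces a surjection $\fac_{i_0}(M)\twoheadrightarrow k_{m_{i_0}}$, which splits because $k_{m_{i_0}}$ is projective. The bottom term $M_n\simeq E_{i_n}$ lies inside $\sub_{i_n}(M)$ and splits off because $k_{m_{i_n}}$ is self-injective.

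The paper gives no proof of its own and simply cites \cite[Lem.\ 3.1]{GLS18a}; your top/bottom-of-the-filtration argument is the standard one for this statement. One small remark: the case $M=0$ is already excluded by the definition, since $n\geq 0$ forces $M_0\supsetneq M_1$. For $M=0$ the conclusion would be false rather than trivial.
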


The following notion was introduced by Geiss--Leclerc--Schr\"{o}er in order to characterise top-dimensional components of $\mathrm{nil}_E(\Pi_{Q,\bfm})$ \cite[\S 4]{GLS18a}.

\begin{definition}\label{def/crystalRep}
Let $M$ be an $E$-filtered representation of $(Q,\bfm)$. We say that $M$ is crystal if $M\simeq 0$ or if, for every $i\in Q_0$, the $k_{m_i}$-modules $\sub_i(M)$ and $\fac_i(M)$ are free and, if non-isomorphic to $M$, the representations $K_i(M)$ and $C_i(M)$ are also crystal.
\end{definition}

Note that, given $i\in Q_0$, if $\sub_i(M)$ (resp.\ $\fac_i(M)$) is locally free, then $C_i(M)$ (resp.\ $K_i(M)$) is locally free and E-filtered. We now define the loci of E-filtered and crystal representations in $\Pi_{Q,\bfm}$-modules, and the analogue $\Lambda_{Q,\bfm}$ of Lusztig's nilpotent variety.

\begin{definition}\label{def/nilpotentCone}
Let $\bfr\in\bbZ_{\geq0}^{Q_0}$. We define
\[
\mathrm{nil}_{\bfr}:=\mathrm{nil}_E(\Pi_{Q,\bfm},\bfr)\subset\mu_{Q,\bfm;\bfr}^{-1}(0)
\]
as the reduced $\GL_{\bfm,\bfr}$-invariant constructible subscheme whose points correspond to $E$-filtered representations.

Let $Z\subset\mathrm{nil}_E(\Pi_{Q,\bfm},\bfr)$ be an irreducible component. We say that $Z$ is a crystal component if there is a non-empty open subset $Z'\subset Z$ such that, for all $(x,y)\in Z'$, the representation $M(x,y)$ is crystal.

We also define
\[
\Lambda_\bfr:=\Lambda_{Q,\bfm;\bfr}\subset\mathrm{nil}_E(\Pi_{Q,\bfm},\bfr)
\]
as the union of all crystal components of $\mathrm{nil}_E(\Pi_{Q,\bfm},\bfr)$. We let $\Lambda_\bfr^{\mathrm{cr}}\subset\Lambda_\bfr$ be the reduced $\GL_{\bfm,\bfr}$-invariant constructible subscheme whose points correspond to crystal representations. In particular, $\Lambda_\bfr$ is the closure of $\Lambda^{\mathrm{cr}}_\bfr$. Finally, we will consider the following locally of finite type schemes:
\[
\Lambda:=\bigsqcup_{\bfr\in\bbZ_{\geq0}^{Q_0}}\Lambda_\bfr
\text{  and  }
\Lambda^{\mathrm{cr}}:=\bigsqcup_{\bfr\in\bbZ_{\geq0}^{Q_0}}\Lambda_\bfr^{\mathrm{cr}}
.
\]
\end{definition}

Note that all orbits corresponding to crystal representations are contained in $\Lambda$ (see \cref{Rmk/crystalLocusVScrystalComponents} below). Constructibility can be checked by viewing $\mathrm{nil}_E(\Pi_{Q,\bfm},\bfr)$ as the image of the projection from a quiver flag variety with multiplicities (similarly to \cite[\S 1.5]{Lus91}) and noting that for $i\in Q_0$, the condition that $\sub_i(M)$ (resp.\ $\fac_i(M)$) is free over $k_{m_i}$ is constructible (for instance, it is equivalent to $\dim_k\left(\sub_i(M)\right)=m_i\rk(\epsilon^{m_i-1}\bullet)$).

\begin{notation}
Throughout the paper, given an irreducible scheme $Z$, we will say that a general point in $Z$ satisfies a certain property (P) if there exists a non-empty (hence dense) open subset $Z'\subset Z$ such that all points $z\in Z'$ satisfy property (P).
\end{notation}

An important result by Geiss--Leclerc--Schr\"{o}er asserts that crystal components are exactly the top-dimensional components of $\mathrm{nil}_E(\Pi_{Q,\bfm},\bfr)$.

\begin{proposition}\label{Prop/dimCrystalComponents}
\emph{\cite[Thm.\ 4.1, Prop.\ 4.4]{GLS18a}}
Let $\bfr\in\bbZ_{\geq0}^{Q_0}$ and $Z\subset \mathrm{nil}_{\bfr}$ be an irreducible component. Then:
\[
\dim Z\leq\dim R(Q,\bfm;\bfr)=\frac{1}{2}\dim R(\overline{Q},\bfm;\bfr)
\]
and equality holds if and only if $Z$ is a crystal component.
\end{proposition}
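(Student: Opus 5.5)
The plan is to stratify $\mathrm{nil}_\bfr$ according to Hecke data at a vertex and run an induction on $|\bfr|=\sum_i r_i$, following Lusztig's argument \cite[\S 12]{Lus91} and its adaptation in \cite[\S 4]{GLS18a}. The case $\bfr=0$ is trivial. For the inductive step, let $Z\subset\mathrm{nil}_\bfr$ be an irreducible component. By \cref{Lem/facSubE-filteredRep}, each point has some vertex $i$ at which $\fac_i(M)$ has a non-zero free summand. Since there are finitely many vertices and finitely many isomorphism types of $k_{m_i}$-modules of bounded length, we can find a vertex $i$ and a fixed isomorphism type $T$ of $\fac_i(M)$ such that the corresponding constructible locus $Z_{i,T}$ is dense in $Z$. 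Write $T\simeq k_{m_i}^{\oplus c}\oplus T'$ with $c>0$ and $T'$ having no free summand. Set $\bfr'=\bfr-c\bfalpha_i$.

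The key step is a dimension count for the Hecke correspondence at $i$. Consider the variety of pairs $(M,U)$, where $U\subset M_i$ is a free $k_{m_i}$-submodule of rank $c$ containing $\mathrm{Im}(M_{i,\mathrm{in}})$ in a suitable sense, so that $M/U$ is well defined. Equivalently, we take the submodule $K\subset M$ obtained by splitting off the free part of $\fac_i(M)$; this $K$ is E-filtered of rank $\bfr'$. Conversely, $M$ is recovered from $N=K$ together with two pieces of data: a free overmodule $M_i\supset N_i$ with quotient $k_{m_i}^{\oplus c}$, and maps for the outgoing arrows at $i$ (via \cref{Lem/balancingBimodules}) compatible with $\widetilde{M}_{i,\mathrm{in}}\circ M_{i,\mathrm{out}}=0$ from \cref{Lem/momentMapEqn}. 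Counting dimensions fibrewise over the stratum of $\mathrm{nil}_{\bfr'}$ containing $N$ gives
\[
\dim Z_{i,T}\le \dim(\text{stratum of }N)+\bigl(\dim R(Q,\bfm;\bfr)-\dim R(Q,\bfm;\bfr')\bigr)-\delta,
\]
where $\delta\ge 0$ measures the failure of the extension data to be generic. More precisely, $\delta$ is positive when $T'\neq 0$, or when $\sub_i$ of the relevant pieces is not free, because the moment map constraints then cut the fibre by more than the expected amount. The difference of the representation space dimensions is computed using $\dim_k k_a=m_im_j/m_{ij}$ and the Cartan pairing, exactly as in the symmetric case with the symmetriser $\mathrm{diag}(m_i)$ inserted. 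By induction $\dim(\text{stratum of }N)\le\dim R(Q,\bfm;\bfr')$, and this yields the inequality.

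For the equality statement, equality forces three things: $\delta=0$, equality for $N$, and a generic point of $Z$ with $T'=0$. By induction, $N$ then lies generically in a crystal component. Performing the same argument with $\sub_i$ and $C_i$, using the dual half of \cref{Lem/facSubE-filteredRep}, and doing so at every vertex rather than only at the chosen one, we want to show that a generic $M$ in $Z$ has $\sub_j(M)$ and $\fac_j(M)$ free for all $j$, with $K_j(M)$ and $C_j(M)$ generically crystal. This is exactly \cref{def/crystalRep}. Conversely, if $Z$ is crystal, the fibres of the Hecke correspondence over a crystal $N$ have exactly the expected dimension, since freeness makes the extension spaces free $k_{m_i}$-modules of the expected rank, and induction gives equality.

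The main obstacle I expect is the precise fibre dimension computation when $\fac_i(M)$ is free versus when it has a torsion part $T'$. Over $k_{m_i}$, the space of compatible extensions and its constraint from the moment map are modules rather than vector spaces, and one has to show that torsion strictly lowers the dimension. I would first handle this by reducing to a local computation at the single vertex $i$: the complex $\bigoplus M_j\otimes k_a\to M_i\to\bigoplus k_a\otimes M_j$ (using $\widetilde{M}_{i,\mathrm{in}}$ and $M_{i,\mathrm{out}}$). There I would compare $\dim_k$ of homology with $m_i$ times the ranks, using the criterion $\dim_k\sub_i(M)=m_i\rk(\epsilon^{m_i-1}\bullet)$ noted after \cref{def/nilpotentCone} to detect non-freeness.
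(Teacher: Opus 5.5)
The paper gives no proof of this statement; it imports it from \cite[Thm.\ 4.1, Prop.\ 4.4]{GLS18a}. So your self-contained Lusztig-style induction has to stand on its own.

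The direction ``crystal $\Rightarrow$ top-dimensional'' is fine. For generic $M\in Z$, choose $i$ with $\fac_i(M)\simeq k_{m_i}^{\oplus c}$ and $c>0$. Then $K_i(M)$ is crystal by definition, and the free-case Hecke correspondence of \cref{Rem/restrictionHeckeCorrespondence} gives $\dim Z=\dim Z'+\dim R(Q,\bfm;\bfr)-\dim R(Q,\bfm;\bfr')$ with $Z'$ crystal.

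The inequality, however, breaks exactly at the torsion step. You claim that the submodule $K$ obtained by splitting off the free part of $\fac_i(M)$ is E-filtered. This is false in general, and $K$ is not even canonical. Here is a counterexample:
\begin{itemize}
\item take $Q\colon 1\to 2\leftarrow 3$, $\bfm=(1,2,1)$, $\bfr=(1,2,1)$;
\item set $A(1)=C(1)=(\epsilon,0)\in k_2^{\oplus 2}$, $B(u,v)=v_1$ (the $\epsilon$-coefficient of $v$), and $D=-B$.
\end{itemize}
Then $BA=DC=0$ and $AB+CD=0$. The module $M$ is E-filtered via $M\supset(k,k_2(1,0),k)\supset(0,k_2(1,0),k)\supset(0,k_2(1,0),0)\supset 0$, and $\fac_2(M)\simeq k_2\oplus k_2/(\epsilon)$. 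But for every $s\neq 0$, the kernel $U^{(s)}=(k,k_2(1,\epsilon s),k)$ of a surjection $M\twoheadrightarrow E_2$ has $\fac_1=\fac_3=0$ and $\fac_2\simeq k_2/(\epsilon)$. By \cref{Lem/facSubE-filteredRep}, $U^{(s)}$ is therefore not E-filtered. So your correspondence does not land in $\mathrm{nil}_{\bfr'}$, and the inductive hypothesis cannot be applied to it.

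Your defect heuristic ($\delta>0$ when $T'\neq 0$) is also backwards. Let $S_{T'}\subset\mathrm{nil}_{\bfr'}$ be the stratum where $\fac_i\simeq T'$, and put $t=\dim_k T'$.
\begin{itemize}
\item For $N\in S_{T'}$, $\Ker\widetilde N_{i,\mathrm{in}}$ is $t$ dimensions larger than in the free case. So the moment map cuts \emph{less}, and the fibre of $q$ over $N$ grows by $ct$.
\item This is exactly offset by the $ct$-dimensional family of splittings $\fac_i(M)\twoheadrightarrow k_{m_i}^{\oplus c}$ in the fibre of $p$. Over all splittings there is no defect at all.
\item Once you keep only E-filtered splittings, as you must in order to induct, the fibre of $p$ can shrink. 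In the example above it is a point instead of $\mathbb{A}^1$.
\end{itemize}
This pushes the estimate in the wrong direction: $\dim S_{T'}\le\dim R(Q,\bfm;\bfr')$ no longer implies the bound. You would need a sharper inductive control of the torsion strata.

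Finally, ``equality $\Rightarrow$ crystal'' is only stated as a goal. At a vertex $j$ where generic $\fac_j(M)$ is non-zero torsion, there is no Hecke step at all. Nothing in the sketch shows that, for every $j$, $\sub_j(M)$ and $\fac_j(M)$ are generically free and $K_j(M)$, $C_j(M)$ are crystal. That is the actual content of \cite[Prop.\ 4.4]{GLS18a}.

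For the inequality alone, you could sidestep the torsion bookkeeping with the flag/conormal isotropy argument in the proof of \cref{Prop/lagrangianNilCone}, at least in characteristic zero. It covers every point of $\mathrm{nil}_\bfr$, since an E-filtration $N_\bullet$ satisfies $x(N_\bullet)\subset N_{\bullet+1}$ and $y(N_\bullet)\subset N_\bullet$.
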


\begin{example}\label{Exmp/NonMaxComponentsTypeC2}\cite[\S 8.2.2]{GLS18a}
Let us consider the quiver with multiplicities $(Q,\bfm)$, where
\[
Q=\bullet_1\longrightarrow\bullet_2
\]
and $\bfm=(m_1,m_2)=(1,2)$. For $\bfr\in\bbZ_{\geq0}^{Q_0}$, we have a $\GL_{\bfm,\bfr}$-equivariant isomorphism:
\[
R(\overline{Q},\bfm;\bfr)\simeq\mathrm{Mat}_{r_2\times r_1}(k_2)\times\mathrm{Mat}_{r_1\times r_2}(k_2)
,
\]
where $\GL_{\bfm,\bfr}$ acts on the right-hand side by left-right multiplication. Under this isomorphism, for $(A,B)\in\mathrm{Mat}_{r_2\times r_1}(k_2)\times\mathrm{Mat}_{r_1\times r_2}(k_2)$, we have:
\[
\mu_{(Q,\bfm),\bfr}(A,B)=(-BA[\epsilon],AB)
,
\]
where $BA[\epsilon]$ denotes the matrix obtained by extracting the coefficients of $\epsilon$ from $BA\in\mathrm{Mat}_{r_1\times r_1}(k_2)$. From this description, we can compute the irreducible components of $\mathrm{nil}_E(\Pi_{Q,\bfm},\bfr)$ for small values of $\bfr$.

Fix $\bfr=(1,2)$. Then the action $\GL_{\bfm,\bfr}\curvearrowright R(Q,\bfm;\bfr)$ admits three orbits, represented respectively by $(\begin{smallmatrix}1 \\ 0\end{smallmatrix})$, $(\begin{smallmatrix}\epsilon \\ 0\end{smallmatrix})$ and $0$. The projection $\mu_{(Q,\bfm),\bfr}^{-1}(0)\rightarrow R(Q,\bfm;\bfr)$ restricts to a vector bundle over each of those orbits (its conormal bundle, see \cite[Prop.\ 4.1.3.ii]{Gin12}), with respective fibres $\{0\}$, $\epsilon\mathrm{Mat}_{r_1\times r_2}(k_2)$ (at $(\begin{smallmatrix}\epsilon \\ 0\end{smallmatrix})$) and $R(Q,\bfm;\bfr)^\vee$. The conormal bundles of the first and third orbits consist of E-filtered representations. Moreover, the subset of E-filtered representations in the conormal bundle of $\GL_{\bfm,\bfr}\cdot (\begin{smallmatrix}\epsilon \\ 0\end{smallmatrix})$ is the sub-bundle formed by the orbits of $\left((\begin{smallmatrix}\epsilon \\ 0\end{smallmatrix}),(\begin{smallmatrix}0 & \epsilon\end{smallmatrix})\right)$ and $\left((\begin{smallmatrix}\epsilon \\ 0\end{smallmatrix}),0\right)$. Note that the corresponding representations are not crystal, as in both cases $\fac_2$ is not free over $k_2$.

Therefore, we obtain two crystal components, $R(Q,\bfm;\bfr)\subset\Lambda_\bfr$ and $R(Q,\bfm;\bfr)^\vee\subset\Lambda_\bfr$, and one non-crystal component, the orbit closure of $\left((\begin{smallmatrix}\epsilon \\ 0\end{smallmatrix}),(\begin{smallmatrix}0 & \epsilon\end{smallmatrix})\right)$. A direct computation shows that its dimension is $3<4=\dim R(Q,\bfm;\bfr)$. This is consistent with \cite[\S 8.2.2]{GLS18a} (note that the dimensions computed in \emph{loc.\ cit.\ } differ from ours by $8$, since quiver moduli are presented in a different, although equivalent fashion, see \cite[\S 2.2-2.3]{GLS18a}).
\end{example}

As a consequence of \cref{Prop/dimCrystalComponents}, we can show that $\Lambda_\bfr\subset\mathrm{T}^*R(Q,\bfm;\bfr)$ is a lagrangian subscheme. This generalises results of Lusztig's to quivers with multiplicities \cite{Lus91,Lus92}. Our proof follows the argument given in \cite[Thm.\ 4.12]{S12b}.

\begin{proposition}\label{Prop/lagrangianNilCone}
Let $\bfr\in\bbZ_{\geq0}^{Q_0}$. The subscheme $\Lambda_\bfr\subset\mathrm{T}^*R(Q,\bfm;\bfr)$ is lagrangian.
\end{proposition}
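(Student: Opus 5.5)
We show that $\Lambda_\bfr$ is isotropic and that each of its irreducible components has dimension $\dim R(Q,\bfm;\bfr)=\frac{1}{2}\dim \mathrm{T}^*R(Q,\bfm;\bfr)$. The dimension statement is already \cref{Prop/dimCrystalComponents}: $\Lambda_\bfr$ is by definition the union of the crystal components, and these are exactly the components of dimension $\dim R(Q,\bfm;\bfr)$. So the whole task is isotropy, i.e.\ that the symplectic form restricts to zero on the tangent space at a general smooth point of each component $Z\subset\Lambda_\bfr$. As in \cite[Thm.\ 4.12]{S12b}, it is enough to cover $Z$ by finitely many locally closed isotropic pieces. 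Indeed, a dense isotropic piece has an open dense subset on which the form vanishes, and this forces $Z$ to be isotropic.

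\textbf{Stratification.} We stratify $\mathrm{nil}_\bfr$ by the $\GL_{\bfm,\bfr}$-orbit of the projection $x\in R(Q,\bfm;\bfr)$. Orbits need not be finite in number, so we instead use the standard argument: for a locally closed $\GL_{\bfm,\bfr}$-stable subvariety $S\subset R(Q,\bfm;\bfr)$ on which orbits have constant dimension, the conormal variety
\[
\mathrm{T}^*_{\mathrm{orb}}S:=\{(x,y)\colon x\in S,\ y\perp \mathfrak{gl}_{\bfm,\bfr}\cdot x\}
\]
is precisely $\pi^{-1}(S)\cap\mu^{-1}(0)$. This follows from the defining identity $\langle\xi\cdot x,y\rangle=\langle\xi,\mu(x,y)\rangle$.

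\textbf{Isotropy.} The conormal bundle $\mathrm{T}^*_{\mathrm{orb}}S$ is coisotropic-type rather than isotropic. The standard fix is Lusztig's argument in \cite[\S 12]{Lus91} via the Lie algebra: the symplectic form evaluated on $\mu^{-1}(0)$ pairs the tangent directions. The cleaner route is Slodowy/Ginzburg-style: $\mu^{-1}(0)=\bigcup_{\mathcal{O}}\overline{\mathrm{T}^*_{\mathcal{O}}R}$ whenever there are finitely many orbits. In general we instead partition $R(Q,\bfm;\bfr)$ into finitely many $\GL_{\bfm,\bfr}$-stable locally closed smooth pieces $S$. On each piece we use the fact that the conormal bundle $\mathrm{T}^*_SR$ of a smooth subvariety is Lagrangian, and that $\mathrm{T}^*_SR\subset \pi^{-1}(S)\cap\mu^{-1}(0)$. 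This inclusion alone is not enough, so we argue by dimension. A top-dimensional component $Z$ has $\dim Z=\dim R$. Choose the piece $S$ such that $Z\cap\pi^{-1}(S)$ is dense in $Z$. The fibre of $Z$ over a point of $S$ lies in $(\mathfrak{gl}\cdot x)^\perp$, of dimension $\dim R-\dim(\mathfrak{gl}\cdot x)$. Hence $\dim Z\le \dim S+\dim R-\dim(\mathfrak{gl}\cdot x)$, which together with $\dim Z=\dim R$ forces $\dim S\ge$ the orbit dimension.

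\textbf{Main obstacle.} The inequality above does not by itself give isotropy when $S$ is strictly larger than an orbit. In the multiplicity setting there are finitely many orbits only in special cases (as in \cref{Exmp/NonMaxComponentsTypeC2}), so we cannot use orbits directly. The approach to try first is the one in \cite{S12b}: use the E-filtration and Hecke correspondences to write a dense open part of $Z$ as the image of an iterated construction from $0$, and prove isotropy inductively. Concretely, $Z$ is dense in the image under $p$ of a correspondence from a smaller crystal component $Z'$. If $Z'$ is isotropic, then adjoining a free summand $E_i$ via an extension contributes tangent directions that pair trivially, because the relevant extension and homomorphism spaces are mutually orthogonal under the trace pairing (the Crawley-Boevey formula for quivers with multiplicities, \cite{GLS17a}). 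This inductive isotropy computation is the step we expect to be the crux.
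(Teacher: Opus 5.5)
Your reduction to isotropy via \cref{Prop/dimCrystalComponents} is fine. The gap is that the proposal never actually proves isotropy, which is the whole content of the statement.

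The stratification paragraph only gives the vacuous inequality $\dim S\geq\dim(\gl_{\bfm,\bfr}\cdot x)$, as you concede. Worse, for a stratum $S$ containing a positive-dimensional family of orbits, the locus $\pi^{-1}(S)\cap\mu^{-1}(0)=\{(x,y)\colon x\in S,\ y\perp\gl_{\bfm,\bfr}\cdot x\}$ has dimension $\dim S+\dim R(Q,\bfm;\bfr)-\dim(\gl_{\bfm,\bfr}\cdot x)>\dim R(Q,\bfm;\bfr)$. So it cannot be isotropic, and no refinement of this approach will work.

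The fallback Hecke induction is only asserted. The correspondence $\Lambda_{\bfr'}\leftarrow\Lambda_{\bfr',\bfr}\rightarrow\Lambda_\bfr$ is not presented as a Lagrangian correspondence between $\mathrm{T}^*R(Q,\bfm;\bfr')$ and $\mathrm{T}^*R(Q,\bfm;\bfr)$: it involves the choice of the embedding $\varphi$ and is only meaningful up to the group actions. So isotropy of $Z'$ does not transfer to $Z$ for free. A Crawley-Boevey-type formula is a dimension identity for Hom and Ext spaces; by itself it says nothing about the restriction of the symplectic form to tangent spaces of $Z$. Also, \cite[Thm.\ 4.12]{S12b} does not argue by Hecke induction.

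The missing idea is Lusztig's flag-incidence argument, which handles all E-filtered representations at once; this is what the paper does.
\begin{itemize}
\item Put $X=R(Q,\bfm;\bfr)$. Let $Y=\bigsqcup_{\underline{i}}\GL_{\bfm,\bfr}/P_{\underline{i}}$ parametrise flags $N_\bullet$ of locally free submodules of $\bigoplus_{i}k_{m_i}^{\oplus r_i}$ with rank-one subquotients.
\item Let $\mathcal{Z}\subset X\times Y$ be the locus where $x(N_\bullet)\subset N_{\bullet+1}$. It is smooth, being a vector bundle over $Y$.
\item A tangent space computation as in \cite[\S 13.4]{Lus91} shows that the conormal fibre of $\mathcal{Z}$ at $(x,N_\bullet)$ consists of the pairs $(y,-\mu_{(Q,\bfm),\bfr}(x,y))$ with $y(N_\bullet)\subset N_\bullet$.
\item Intersecting $\mathrm{T}^*_{\mathcal{Z}}(X\times Y)$ with $\mathrm{T}^*X\times Y$ (zero covector along $Y$) imposes $\mu_{(Q,\bfm),\bfr}(x,y)=0$.
\item The projection of this intersection to $\mathrm{T}^*X$ contains $\mathrm{nil}_\bfr\supset\Lambda_\bfr$. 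Indeed, an E-filtration of $M(x,y)$ is such a flag: since $Q$ has no loops, $x$ and $y$ act by zero on each subquotient $E_{i_l}$, so $x(N_\bullet)\subset N_{\bullet+1}$ and $y(N_\bullet)\subset N_\bullet$.
\item The projection is isotropic, as the composite of the Lagrangian $\mathrm{T}^*_{\mathcal{Z}}(X\times Y)$ with the isotropic correspondence $\mathrm{T}^*X\times Y$ \cite[Prop.\ 2.7.51]{CG97}. Properness of $Y$ fails here, since $Y$ is a jet flag variety, but it is not needed for isotropy.
\end{itemize}
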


\begin{proof}
Since, by \cref{Prop/dimCrystalComponents}, $\Lambda_\bfr$ has pure dimension $\frac{1}{2}\mathrm{T}^*R(Q,\bfm;\bfr)$, it remains to check that it is an isotropic subscheme. To do this, we show that $\Lambda_\bfr$ lies in the image of an isotropic subscheme under a well-chosen lagrangian correspondence and apply \cite[Prop.\ 1.3.30, Prop.\ 2.7.51]{CG97}.

Let us write for short $G:=\GL_{\bfm,\bfr}$, $\mathfrak{g}:=\mathfrak{gl}_{\bfm,\bfr}$, $X:=R(Q,\bfm;\bfr)$, $Y:=\bigsqcup_{\underline{i}}G/P_{\underline{i}}$ where the disjoint union runs over the set of sequences $\underline{i}=\{i_1,\ldots,i_{\vert\bfr\vert}\}$, where $\vert\bfr\vert:=\sum_{i\in Q_0}r_i$ and $P_{\underline{i}}$ is the stabiliser of a fixed flag of locally free submodules
\[
N_{\underline{i},\bullet}\subset\bigoplus_{i\in Q_0}k_{m_i}^{\oplus r_i}
\]
of shape $\underline{i}$. Thus $Y$ parametrises flags $N_\bullet=(N_0\supset N_1\supset\ldots)$ of locally free submodules with subquotients of rank $1$, of all possible shapes. Let $Z\subset X\times Y$ be the subscheme parametrising pairs $(x,N_\bullet)\in X\times Y$ such that $x(N_\bullet)\subset N_{\bullet+1}$. Note that the projection $Z\rightarrow Y$ is a vector bundle, so $Z$ is smooth. Then we claim that:
\[
\mathrm{T}_Z^*(X\times Y)
\simeq
\left\{
(x,y,N_\bullet,z)\in\mathrm{T}^*X\times Y\times\mathfrak{g}
\left\vert
\begin{array}{l}
x(N_\bullet)\subset N_{\bullet+1}, \\
y(N_\bullet)\subset N_\bullet, \\
z=-\mu_{Q,\bfm;\bfr}(x,y).
\end{array}
\right.
\right\}
.
\]
Then the projection of $\mathrm{T}_Z^*(X\times Y)\cap (\mathrm{T}^*X\times Y)$ on $\mathrm{T}^*X$ contains $\Lambda_\bfr$. Since $Z\subset\mathrm{T}^*Z$ and $(\mathrm{T}^*X\times Y)\vert_Z\subset\mathrm{T}^*Z\times\mathrm{T}^*X\times\mathrm{T}^*Y\times\mathrm{T}^*X$ are isotropic, this projection also is, by \cite[Prop.\ 2.7.51]{CG97} (applied to $M_1=\mathrm{pt}$, $M_2=\mathrm{T}^*Z\times\mathrm{T}^*X\times\mathrm{T}^*Y$, $M_3=\mathrm{T}^*X$, $Z_{12}=Z$ and $Z_{23}=(\mathrm{T}^*X\times Y)\vert_Z$ in the notation of \emph{loc.\ cit.\ }).

Let us now prove the claim. As in \cite[\S 13.4]{Lus91}, the tangent space of $Z$ at $(x,g)\in X\times G/P_{\underline{i}}$ is identified with the kernel of the following map:
\[
\begin{array}{rcl}
X\times\mathfrak{g}/g\cdot\mathfrak{p}_{\underline{i}} & \rightarrow & X/g\cdot X_{\underline{i}}\\
(x',\xi) & \mapsto & -\xi\cdot x+x'
,
\end{array}
\]
where $X_{\underline{i}}\subset X$ denotes the affine subspace defined by the condition $x(N_{\underline{i},\bullet})\subset N_{\underline{i},\bullet+1}$. Therefore, the conormal space of $Z$ at $(x,g)$ is the subspace of $X^\vee\times g\cdot\mathfrak{p}_{\underline{i}}^\perp$ spanned by vectors of the form $(y,-\mu_{Q,\bfm;\bfr}(x,y))$, where $y\in (g\cdot X_{\underline{i}})^\perp$.
\end{proof}

\begin{remark}
The proof of \cite[Thm.\ 4.12]{S12b} relies on \cite[Prop.\ 8.3.11]{KS90} instead of \cite[Prop.\ 2.7.51]{CG97} and assumes properness of $Y$ (in the case of quivers without multiplicities). That assumption is not necessary, since, in \cite[Prop.\ 8.3.11]{KS90}, properness is only assumed so that lagrangian correspondences preserve subanalytic subsets, but is not used in the proof of isotropy.
\end{remark}

In order to understand how crystal components are permuted under the operators of Hecke modification, it is necessary to examine certain loci where $\fac_i(M)$ is of fixed type.

\begin{definition}\label{def/cokernelLoci}
Let $\bfr\in\bbZ_{\geq0}^{Q_0}$, $c\in\bbZ_{\geq0}$ and $i\in Q_0$. We define
\[
\mathrm{nil}_{\bfr,c\bfalpha_i}\subset\mathrm{nil}_\bfr \quad 
\text{ (resp. }
\Lambda_{\bfr,c\bfalpha_i}\subset\Lambda_\bfr
\text{)}
\]
as the reduced $\GL_{\bfm,\bfr}$-invariant constructible subscheme whose points correspond to representations $M$ satisfying $\fac_i(M)\simeq k_{m_i}^{\oplus c}$ (resp.\ the union of irreducible components in $\mathrm{nil}_{\bfr,c\bfalpha_i}$ of dimension $\dim\Lambda_\bfr$).
\end{definition}

\begin{notation}\label{Notation/0alpha_i}
When $c=0$, we will still write $\mathrm{nil}_{\bfr,0\bfalpha_i}$ (resp.\ $\Lambda_{\bfr,0\bfalpha_i}$) in order to keep track of the vertex $i$ where $\fac_i(M)=0$.
\end{notation}

In particular, we have $\Lambda_{\bfr,c\bfalpha_i}\subset\mathrm{nil}_{\bfr,c\bfalpha_i}\cap\Lambda_\bfr$ and the inclusion can be strict.

\begin{example}\label{Exmp/cokernelLociCrystalComponents}
Fix $(Q,\bfm)$ as in \cref{Exmp/NonMaxComponentsTypeC2} and $\bfr=(2,2)$. Then there are finitely many orbits for the action $\GL_{\bfm,\bfr}\curvearrowright R(Q,\bfm;\bfr)$, represented respectively by:
\[
A_1=(\begin{smallmatrix}1 & 0 \\ 0 & 1\end{smallmatrix}), \  A_2=(\begin{smallmatrix}1 & 0 \\ 0 & \epsilon\end{smallmatrix}), \ 
A_3=(\begin{smallmatrix}1 & \epsilon \\ 0 & 0\end{smallmatrix}), \ 
A_4=(\begin{smallmatrix}1 & 0 \\ 0 & 0\end{smallmatrix}), \ 
A_5=(\begin{smallmatrix}\epsilon & 0 \\ 0 & \epsilon\end{smallmatrix}), \ 
A_6=(\begin{smallmatrix}\epsilon & 0 \\ 0 & 0\end{smallmatrix})\text{ and } 
A_7=0.
\]
In order to describe the conormal spaces, we let
\[  B_3:=(\begin{smallmatrix}0 & 0 \\ 1 & 0\end{smallmatrix}),\ B_4:=(\begin{smallmatrix}0 & -\epsilon \\ 0 & 1\end{smallmatrix}) \  \text{ and } 
B_7 := A_1.\]
Then the conormal space $C_i$ at $A_i$, when non-zero and generically crystal, is given by:
\[
C_3=
\left\{
yB_3
\ \vert\ y\in k_2
\right\}
,\ 
C_4=
\left\{
B_4(y_1,y_2)\ \vert\ y_1\in k,\ y_2\in k_2
\right\}
,
\]
and $C_7=R(Q,\bfm;\bfr)^\vee$.
Therefore, the crystal components are the respective orbit closures of $(A_1,0)$, $(A_3,B_3)$, $(A_4,B_4(1,0))$ and $(0,B_7)$. Let us now consider:
\[
\Lambda_3=\overline{\GL_{\bfm,\bfr}\cdot (A_3,B_3)}\subset\Lambda_\bfr.
\]
Then $\Lambda_3\cap\mathrm{nil}_{\bfr,\bfalpha_1}$ is the orbit closure of $(A_3,\epsilon B_3)$, which is not contained in any other crystal component and whose dimension is smaller than $\dim\Lambda_3=\dim\Lambda_\bfr$. Therefore, we have found an irreducible component of $\Lambda_\bfr\cap\mathrm{nil}_{\bfr,\bfalpha_1}$ which is not top-dimensional, hence not contained in $\Lambda_{\bfr,\bfalpha_1}$.
\end{example}

Over $\Lambda_{\bfr,c\bfalpha_i}$, the spaces of Hecke modifications, which consist of nested representations $M'\subset M$ of $\Pi_{Q,\bfm}$ satisfying $M/M'\simeq E_i$ for some $i\in Q_0$, are geometrically well-behaved.

\begin{definition}\label{def/quiverHeckeCorrespondence}
Let $\bfr\in\bbZ_{\geq0}^{Q_0}$, $c,l\in\bbZ_{\geq0}$ and $i\in Q_0$. We define the closed subscheme:
\[
\mathrm{nil}_{\bfr,\bfr+l\bfalpha_i;(c+l)\bfalpha_i}
\subseteq 
\mathrm{nil}_{\bfr+l\bfalpha_i,(c+l)\bfalpha_i}\times\Hom_{k_{m_i}}^{\mathrm{inj}}(k_{m_i}^{\oplus r_i},k_{m_i}^{\oplus (r_i+l)})
\]
of tuples $((x,y),\varphi)\in\mathrm{nil}_{\bfr,\bfr+l\bfalpha_i;(c+l)\bfalpha_i}$ such that $\Ima(M(x,y)_{i,\mathrm{in}})\subset\Ima(\varphi)$. We further define the following maps:
\[
\mathrm{nil}_{\bfr,c\bfalpha_i}
\overset{q}{\longleftarrow}
\mathrm{nil}_{\bfr,\bfr+l\bfalpha_i;(c+l)\bfalpha_i}
\overset{p}{\longrightarrow}
\mathrm{nil}_{\bfr+l\bfalpha_i,(c+l)\bfalpha_i}
,
\]
where $p$ is induced by the projection $((x,y),\varphi)\mapsto (x,y)$ and $q$ maps $((x,y),\varphi)$ to the restriction of $(x,y)$ along $\varphi$. We refer to such diagrams as Hecke correspondences.
\end{definition}

\begin{remark}\label{Rem/restrictionHeckeCorrespondence}
By \cite[Lem.\ 3.4-3.6]{GLS18a}, in the above notation, the maps $p$ and $q$ are surjective, smooth, with connected fibres, and satisfy:
\[
\dim(q)-\dim(p)=\dim(\mathrm{nil}_{\bfr+l\bfalpha_i,(c+l)\bfalpha_i})-\dim(\mathrm{nil}_{\bfr,c\bfalpha_i})
.
\]
Therefore for any irreducible component $Z'\subset\mathrm{nil}_{\bfr,c\bfalpha_i}$ (resp.\ $Z\subset\mathrm{nil}_{\bfr+l\bfalpha_i,(c+l)\bfalpha_i}$), we have that $p(q^{-1}(Z'))$ (resp.\ $q(p^{-1}(Z))$) is an irreducible component of $\mathrm{nil}_{\bfr+l\bfalpha_i,(c+l)\bfalpha_i}$ (resp.\ $\mathrm{nil}_{\bfr,c\bfalpha_i}$) of dimension $\dim Z'+\dim(q)-\dim(p)$ (resp.\ $\dim Z+\dim(p)-\dim(q)$) and these two constructions are inverse bijections. From \cref{Prop/dimCrystalComponents}, we obtain that $p$ and $q$ restrict as follows:
\[
\Lambda_{\bfr,c\bfalpha_i}
\overset{q}{\longleftarrow}
\Lambda_{\bfr,\bfr+l\bfalpha_i;(c+l)\bfalpha_i}
\overset{p}{\longrightarrow}
\Lambda_{\bfr+l\bfalpha_i,(c+l)\bfalpha_i}
,
\]
where $\Lambda_{\bfr,\bfr+l\bfalpha_i;(c+l)\bfalpha_i}\subset\mathrm{nil}_{\bfr,\bfr+l\bfalpha_i;(c+l)\bfalpha_i}$ is the union of irreducible components of dimension $\dim\Lambda_{\bfr,c\bfalpha_i}+\dim(q)=\dim\Lambda_{\bfr+l\bfalpha_i,(c+l)\bfalpha_i}+\dim(p)$.
\end{remark}

\begin{remark}\label{Rmk/crystalLocusVScrystalComponents}
The above observations imply that all points $(x,y)\in\mathrm{nil}_\bfr$ such that $M(x,y)$ is crystal lie in $\Lambda_\bfr$. In other words, the crystal locus is contained in the union of crystal components.

Indeed, suppose that $M:=M(x,y)$ is crystal for a given $(x,y)\in\mathrm{nil}_\bfr$. Then $(x,y)\in\mathrm{nil}_{\bfr,c\bfalpha_i}$ for some $i\in Q_0$ and $c=\fac_i(M)>0$. With the notation of \cref{Rem/restrictionHeckeCorrespondence}, there exists $(x',y')\in\mathrm{nil}_{\bfr-c\bfalpha_i,0\bfalpha_i}$ such that $(x,y)\in p(q^{-1}(x',y'))$. Since $M':=M(x',y')$ is also crystal, we may assume that $(x',y')$ is contained in a crystal component $Z'\subset\Lambda_{\bfr}$, by induction on $\bfr$. Since $\fac_{i}(M')=0$ is an open condition, we obtain that $Z'\cap\mathrm{nil}_{\bfr-c\bfalpha_i,0\bfalpha_i}\subset Z'$ is open (and dense). Therefore $Z:=\overline{p(q^{-1}(Z'\cap\mathrm{nil}_{\bfr-c\bfalpha_i,0\bfalpha_i}))}$ has dimension $\dim R(Q,\bfm;\bfr)$, hence $Z\subset\mathrm{nil}_\bfr$ is a crystal component by \cref{Prop/dimCrystalComponents}, and $(x,y)\in Z$.
\end{remark}

\subsection{General filtrations of E-filtered representations}

In this section, we define a dense locus in $\Lambda$, where representations of $\Pi_{Q,\bfm}$ admit a well-behaved filtration with subquotients of the form $E_i$ for $ i\in Q_0$. This will be key in our construction of nilpotent quiver varieties with multiplicities (see \cref{Sect/RecursiveQuotient}). We encode the combinatorics of such filtrations by a \lq type' as follows. Recall that we assume $Q$ has no loop arrows.


\begin{definition}\label{def/filtrationType}
A type is an ordered tuple $F=(c_1\bfalpha_{i_1},\ldots,c_s\bfalpha_{i_s})$, where for $1\leq t\leq s$, we have $c_t\in\bbZ_{\geq0}$ and $i_t\in Q_0$. The associated rank vector of $F$ is $\bfr=\sum_{t=1}^sc_t\bfalpha_{i_t}$. 

We say a representation $M$ of $(Q,\bfm)$ of rank $\bfr$ is of type $F$ if 
\begin{enumerate}[nosep, label=\roman*)]
    \item $\fac_{i_1}(M)\simeq k_{m_{i_1}}^{\oplus c_1}$ and
    \item $K_{i_1}(M)$ has type $F':=(c_2\bfalpha_{i_2},\ldots,c_s\bfalpha_{i_s})$.
\end{enumerate}
\end{definition}

Note that, for any $F$, all representations of type $F$ are $E$-filtered. A representation $M$ of rank $\bfr$ may be of type $F$ for several tuples $F$, or none. If $M$ is crystal, then there exists at least one such tuple. Furthermore, in a given crystal component $Z$, there is at least one such tuple which works for general representations parametrised by $Z$.

\begin{lemma}\label{Lem/typeCrystalComponent}
For every irreducible component $Z\subset\mathrm{nil}_\bfr$ and type $F=(c_1\bfalpha_{i_1},\ldots,c_s\bfalpha_{i_s})$, there exists a reduced, constructible subscheme $Z_F\subset Z$ whose points correspond to representations of type $F$. If $Z$ is moreover crystal, then there is at least one such tuple such that $Z_F\subset Z$ is open and dense.
\end{lemma}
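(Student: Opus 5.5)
Both parts go by induction on the length $s$ of the type $F$.

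\emph{Constructibility.} I would first show that the locus $\mathrm{nil}_{\bfr,F}\subset\mathrm{nil}_\bfr$ of representations of type $F$ is constructible, and then set $Z_F:=Z\cap\mathrm{nil}_{\bfr,F}$ with its reduced structure.

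For $s=1$ the representation is concentrated at $i_1$. Since $Q$ has no loops, $M_{i_1,\mathrm{in}}=0$, so the condition $\fac_{i_1}(M)\simeq k_{m_{i_1}}^{\oplus c_1}$ holds automatically. If $\bfr=c_1\bfalpha_{i_1}$, the locus is all of $\mathrm{nil}_\bfr$.

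For general $s$, the condition $\fac_{i_1}(M)\simeq k_{m_{i_1}}^{\oplus c_1}$ defines the constructible subscheme $\mathrm{nil}_{\bfr,c_1\bfalpha_{i_1}}$ of \cref{def/cokernelLoci}. I then use the Hecke correspondence of \cref{def/quiverHeckeCorrespondence} with $l=c_1$, $c=0$, base rank $\bfr-c_1\bfalpha_{i_1}$ and vertex $i_1$:
\[
\mathrm{nil}_{\bfr-c_1\bfalpha_{i_1},0\bfalpha_{i_1}}\overset{q}{\longleftarrow}\mathrm{nil}_{\bfr-c_1\bfalpha_{i_1},\bfr;c_1\bfalpha_{i_1}}\overset{p}{\longrightarrow}\mathrm{nil}_{\bfr,c_1\bfalpha_{i_1}}.
\]
Over $\mathrm{nil}_{\bfr,c_1\bfalpha_{i_1}}$ the submodule $\Ima(M_{i_1,\mathrm{in}})$ is free of corank $c_1$. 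Hence the fibre of $p$ is the single point $\varphi$ with image $\Ima(M_{i_1,\mathrm{in}})$, up to the $\GL$ action on $\varphi$, and $q$ sends it to $K_{i_1}(M)$. Therefore
\[
\mathrm{nil}_{\bfr,F}=p\big(q^{-1}(\mathrm{nil}_{\bfr-c_1\bfalpha_{i_1},F'})\big).
\]
By induction $\mathrm{nil}_{\bfr-c_1\bfalpha_{i_1},F'}$ is constructible. Preimages of constructible sets are constructible, and so are images under morphisms of finite type by Chevalley's theorem. This gives constructibility.

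\emph{Open dense type for crystal $Z$.} I would induct on $|\bfr|$. If $\bfr=0$, the empty type works. Otherwise take a general point of $Z$. It is crystal and nonzero, and \cref{Lem/facSubE-filteredRep} gives a vertex $i$ with $\fac_i(M)$ having a nonzero free summand. Since $M$ is crystal, $\fac_i(M)$ is free.

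The rank of $\fac_i$ is upper semicontinuous and takes finitely many values. I choose $i$ and then $c>0$ so that $Z\cap\mathrm{nil}_{\bfr,c\bfalpha_i}$ contains a dense open subset of $Z$. Concretely, $c$ is the generic value of the freeness rank of $\fac_i$ on the open set where $\fac_i(M)$ is free. This makes $Z$ an irreducible component of $\mathrm{nil}_{\bfr,c\bfalpha_i}$ of full dimension.

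By \cref{Rem/restrictionHeckeCorrespondence}, $Z':=\overline{q(p^{-1}(Z^\circ))}$ is then an irreducible component of $\mathrm{nil}_{\bfr-c\bfalpha_i,0\bfalpha_i}$ of dimension $\dim Z+\dim(p)-\dim(q)$. Here $Z^\circ$ is the open part of $Z$ inside $\mathrm{nil}_{\bfr,c\bfalpha_i}$. The dimension equals $\dim R(Q,\bfm;\bfr-c\bfalpha_i)$, because the dimension difference in \cref{Rem/restrictionHeckeCorrespondence} matches the top dimensions.

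The closure of $Z'$ in $\mathrm{nil}_{\bfr-c\bfalpha_i}$ is therefore a crystal component by \cref{Prop/dimCrystalComponents}, since $\fac_i=0$ is an open condition. By induction some type $F'$ has $\overline{Z'}_{F'}$ open dense. Set $F=(c\bfalpha_i,F')$. Then $Z_F$ contains $p(q^{-1}(\overline{Z'}_{F'}\cap Z'))\cap Z^\circ$. Since $p$ and $q$ are smooth, hence open, with irreducible fibres, this set contains a dense open subset of $Z$.

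To upgrade ``contains a dense open'' to ``is open'', I replace $Z_F$ by its interior. The statement only needs density of an open subset of the type-$F$ locus.

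\emph{Main obstacle.} The delicate step is the dimension bookkeeping: one must show that $Z'$ is top-dimensional, so that induction applies to a crystal component. This rests on the equality in \cref{Rem/restrictionHeckeCorrespondence} together with \cref{Prop/dimCrystalComponents}.
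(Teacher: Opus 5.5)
Your constructibility argument is correct and is essentially the paper's. Defining the global locus $\mathrm{nil}_{\bfr,F}=p\bigl(q^{-1}(\mathrm{nil}_{\bfr-c_1\bfalpha_{i_1},F'})\bigr)$ and then intersecting with $Z$ is, if anything, cleaner.

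The gap is in the second half. By the first sentence of the statement, $Z_F$ is the locus of \emph{all} points of $Z$ of type $F$, and the lemma asserts that this whole locus is open in $Z$. You only show that it contains a dense open subset, and then ``replace $Z_F$ by its interior''. That changes the object, and it is not true that only density of an open piece is needed. Openness of the full type locus is used later in the paper:
\begin{itemize}
\item $\Lambda_{F',0\bfalpha_i}$ is treated as an open subscheme of $\Lambda_{F'}$;
\item $Z_{\mathrm{filt}}=\bigcup_F Z_F$ is meant to be a dense open subset of $Z$;
\item the quotients in \cref{Thm/quotientConstruction} are built for $\Lambda_F^\st$ itself, and the inductive step restricts along the full type-$F'$ locus.
\end{itemize}
Your induction also only propagates the weak property, so it cannot recover openness after the fact.

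The missing step is short, and its ingredients are already in your proof.
\begin{enumerate}
\item[(a)] Let $c$ be the generic rank of $\fac_i$ on $Z$. Then $Z^\circ=Z\cap\mathrm{nil}_{\bfr,c\bfalpha_i}$ is itself open in $Z$. Indeed, $\dim_k\fac_i(M)$ is upper semicontinuous with generic, hence minimal, value $m_ic$, so $\{\dim_k\fac_i(M)\leq m_ic\}$ is open and equality holds there. On this set $\dim_k\Ima(M_{i,\mathrm{in}})$ is constant. So the number of free summands, $\rk(\epsilon^{m_i-1}\bullet\vert\fac_i(M))=\dim_k(\epsilon^{m_i-1}M_i+\Ima M_{i,\mathrm{in}})-\dim_k\Ima M_{i,\mathrm{in}}$, is lower semicontinuous there, and $\{\geq c\}$ is open.
\item[(b)] Use the \emph{equality} from your first part rather than a containment. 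Since $q(p^{-1}(Z^\circ))\subset\overline{Z'}$, you get $Z_F=Z^\circ\cap\mathrm{nil}_{\bfr,F}=p\bigl(p^{-1}(Z^\circ)\cap q^{-1}(\overline{Z'}_{F'})\bigr)$. Take as induction hypothesis that $\overline{Z'}_{F'}$ is open in $\overline{Z'}$. Then $p^{-1}(Z^\circ)\cap q^{-1}(\overline{Z'}_{F'})$ is open in $p^{-1}(Z^\circ)$. Its image under the smooth, hence open, map $p\colon p^{-1}(Z^\circ)\to Z^\circ$ is therefore open.
\end{enumerate}
This is exactly how the paper argues.

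The paper also finds the dense type more cheaply than you do. Every crystal representation has a type, and only finitely many types with positive entries have rank vector $\bfr$. So the constructible sets $Z_F^{\mathrm{cr}}$ cover the dense set $Z^{\mathrm{cr}}$, and one of them is dense. Openness is then proved for any $F$ with $Z_F$ dense. That argument never needs $\overline{Z'}$ to be crystal, so it avoids the dimension bookkeeping you flagged as the main obstacle.
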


\begin{proof}
Let $Z\subset\mathrm{nil}_\bfr$ be an irreducible component and $F$ be a tuple as above. Then $Z_F$ is the locus of points $(x,y)\in Z_{i_1,c\bfalpha_{i_1}}$ such that $M:=M(x,y)$ satisfies condition ii) from \cref{def/filtrationType}.
Let us show that condition ii) is constructible. By \cref{Rem/restrictionHeckeCorrespondence}, there is an irreducible component $Z'\subset\mathrm{nil}_{\bfr-c_1\bfalpha_{i_1}}$ such that we have a diagram:
\[
Z'_{i_1,0\bfalpha_{i_1}}
\overset{q}{\longleftarrow}
p^{-1}(Z_{i_1,c\bfalpha_{i_1}})
\overset{p}{\longrightarrow}
Z_{i_1,c\bfalpha_{i_1}}
,
\]
where $p,q$ are smooth, surjective maps with connected fibres. Then the locus cut out by condition ii) is $p(q^{-1}(Z_{F'}\cap Z'_{i_1,0\bfalpha_{i_1}}))$. Thus, condition ii) is constructible, by induction on the length of $F$.

Suppose now that $Z$ is crystal. Since all crystal representations admit a type, we have $Z^\mathrm{cr}=\bigcup_FZ_F^\mathrm{cr}$. Thus there exists at least one tuple $F$ such that $Z_F^\mathrm{cr}\subset Z$ is dense, as $Z^\mathrm{cr}\subset Z$ is irreducible and dense.

Let $F$ be such a tuple and let us show that $Z_F\subset Z$ is open.
Since $Z_F\subset Z$ is dense, the generic (minimal) dimension of $\fac_{i_1}(M)$ is $m_{i_1}c_1$, so condition i) is equivalent to $\dim_k\fac_{i_1}(M)\leq m_{i_1}c_1$ and $\rk(\epsilon^{m_{i_1}-1}\bullet\vert\fac_{i_1}(M))\geq c_1$. This is an open condition, by Chevalley's semicontinuity theorem \cite[Thm.\ 13.1.3]{Gro66}. Finally, we may assume by induction that $Z'_{F'}\subset Z'$ is open. Then, with the notation of the above diagram, $p(q^{-1}(Z_{F'}\cap Z'_{i_1,0\bfalpha_{i_1}}))\subset Z$ is open as $p$ is open (see \cref{Rem/restrictionHeckeCorrespondence}). This finishes the proof.
\end{proof}

We finish with the definitions of the various loci we will work with during the construction of nilpotent quiver varieties with multiplicities.

\begin{definition}\label{def/typeCrystalComponent}
Let $B:=\mathrm{Irr}(\Lambda)$. For $b\in B$, let us denote by $Z_b$ the corresponding irreducible component.
For a type $F$, let $B_F:=\{b\in B\ \vert\ Z_{b,F}\subset Z_b\text{ is dense}\}$. We will consider the following (reduced) subscheme of $\Lambda$:
\[
\Lambda_F:=\bigcup_{b\in B_F}Z_{b,F}.
\]
We define $\Lambda_{\mathrm{filt}}$  and $Z_{b,\mathrm{filt}}$ for $b \in B$ as follows:
\begin{equation}\label{eqn stratification of Lambda filt}
\Lambda^{\mathrm{cr}}
\subset
\Lambda_{\mathrm{filt}}:=\bigcup_F\Lambda_F
\subset
\Lambda
\text{ ; } \quad 
Z_b^{\mathrm{cr}}\subset Z_{b,\mathrm{filt}}:=\bigcup_{B_F\ni b}Z_{b,F}\subset Z .
\end{equation}
\end{definition}

\begin{remark}
Let $F$ be an ordered tuple as above. It follows from \cref{Rem/restrictionHeckeCorrespondence} (by induction on the length of $F$) that $\Lambda_F$ is irreducible. In other words, for every $F$, there exists a unique crystal component $Z\subset\Lambda$ such that $Z_F\subset Z$ is dense, hence $Z_F=\Lambda_F$. In representation-theoretic terms, that irreducible component is the one obtained by applying the sequence of crystal operators determined by $F$ to the unique irreducible component of $\Lambda_{\mathbf{0}}=\mathrm{pt}$ (see \cref{Sect/GLScrystals}).
\end{remark}

\begin{remark}
The following phenomenon frequently occurs: there exist distinct crystal components $Z,Z'\subset\Lambda_\bfr$, which respectively admit generic types $F,F'$ and such that: 
\[
\emptyset\ne Z'_{F'}\cap Z\subset Z\setminus Z_F
.
\]
We will later construct explicit Zariski-locally trivial quotients of stable loci in $Z_F,Z'_{F'}$ under the action of $\GL_{\bfm,\bfr}$ (see \cref{Sect/constructionNilpQuiverVar}), when $Q$ is a framed quiver. The above phenomenon shows that neither $Z_F$ nor $Z'_{F'}$ is an open neighbourhood of $Z'_{F'}\cap Z$ in $\Lambda$. It is therefore impractical to glue the various quotients $Z_F^\st/\GL_{\bfm,\bfr}$ into a quotient of some subset of $\Lambda_\bfr$.

More precisely, for $(Q,\bfm)$ as in \cref{Exmp/NonMaxComponentsTypeC2} and $\bfr=(2,2)$, consider the framed quiver $(Q_\bff,\widehat{\bfm})$, where $\bff=(2,2)$. A typical example of the above situation arises when
\[
Z'=Z'_{F'}=R(Q,\bfm;\bfr)\times\prod_{i\in Q_0}\Hom_{k_{m_i}}(k_{m_i}^{\oplus r_i},k_{m_i}^{\oplus f_i})
\subset\Lambda_{\widehat{\bfr}}
,
\]
where $F'=(2\bfalpha_1,2\bfalpha_2,\bfalpha_\infty)$ and $Z$ is any crystal component for which $F'$ is not a generic type, e.g. $Z$ is the closure of
\[
\GL_{\bfm,\bfr}\cdot (A_3,B_3)\times\prod_{i\in Q_0}\Hom_{k_{m_i}}(k_{m_i}^{\oplus r_i},k_{m_i}^{\oplus f_i})
\]
where $(A_3,B_3)$ are as in \cref{Exmp/cokernelLociCrystalComponents}. In this example, we have:
\begin{align*}
& R(Q,\bfm;\bfr)\times\prod_{i\in Q_0}\Hom_{k_{m_i}}^{\mathrm{inj}}(k_{m_i}^{\oplus r_i},k_{m_i}^{\oplus f_i})\subset(Z'_{F'})^\st
, \\
& \GL_{\bfm,\bfr}\cdot (A_3,B_3)\times\prod_{i\in Q_0}\Hom_{k_{m_i}}^{\mathrm{inj}}(k_{m_i}^{\oplus r_i},k_{m_i}^{\oplus f_i})\subset Z_F^\st
,
\end{align*}
so that $Z'_{F'} \cap Z$ meets the stable locus (see \cref{def/stable}).
\end{remark}

\section{Construction of nilpotent quiver varieties}\label{Sect/constructionNilpQuiverVar}

In this section, we introduce stability conditions for framed quiver representations with multiplicities and construct analogues of Nakajima's nilpotent quiver varieties, as in \cref{MainThm/quotientConstruction}. As above, we assume that $Q$ has no loop arrows.

\subsection{Stability conditions}

In this section, we introduce the stability condition which we will use to construct nilpotent quiver varieties for framed quivers with multiplicities. We also collect a number of properties of stable, framed, E-filtered quiver representations with multiplicities, which we will use throughout our quotient construction in the following sections. Let $(Q,\bfm)$ be a quiver with multiplicities. Let $\bfr\in\bbZ_{\geq0}^{Q_0}$ be a rank vector and $\bff\in\bbZ_{\geq0}^{Q_0}$ a framing vector with associated framed quiver with multiplicities $(Q_\bff,\widehat{\bfm})$. 

\begin{definition}\label{def/stable}
Let $M$ be a locally free, rank $\widehat{\bfr}$ representation of $(\overline{Q_\bff},\widehat{\bfm})$. Let $t_i:M_i\rightarrow k_{m_i}^{\oplus f_i}$ be the map obtained by collecting the $f_i$ arrows $i\rightarrow\infty$ as in \cref{Sect/framedQuivRep}. We say that $M$ is stable if, for any collection of submodules $N_i\subset M_i$, $i\in Q_0$ such that:
\begin{itemize}
\item for all $i\in Q_0$, $N_i\subset \Ker(t_i)$,
\item in restriction to $(\overline{Q},\bfm)$, the modules $N_i$ for $ i\in Q_0$ form a subrepresentation of $M$,
\end{itemize}
we have $N_i=0$ for all $i\in Q_0$. 

For any subscheme $T \subset R(\overline{Q_\bff},\widehat{\bfm};\widehat{\bfr})$, we let $T^{\st}$ denote the set of points corresponding to stable representations. In particular, we have $\Lambda_{\overline{Q_\bff},\widehat{\bfm};\widehat{\bfr}}^\st$ and $Z_{\mathrm{filt}}^\st$, where $Z_\mathrm{filt}$ is as in \cref{def/typeCrystalComponent}.
\end{definition}

As one would expect, the stable locus is an open subset of $R(\overline{Q_\bff},\widehat{\bfm};\widehat{\bfr})$.

\begin{lemma}\label{Lem/opennessStability}
The subset $R(\overline{Q_\bff},\widehat{\bfm};\widehat{\bfr})^{\st}\subset R(\overline{Q_\bff},\widehat{\bfm};\widehat{\bfr})$ is open and $\GL_{\bfm,\bfr}$-invariant.
\end{lemma}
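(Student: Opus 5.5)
Invariance under $\GL_{\bfm,\bfr}$ is immediate. For $g=(g_i)_{i\in Q_0}$ and $M'=g\cdot M$, the tuple $(g_i)$ is an isomorphism of representations of $(\overline{Q_\bff},\widehat{\bfm})$ that is the identity at $\infty$. So $t'_i=t_i\circ g_i^{-1}$, and $N_i\mapsto g_i(N_i)$ is a bijection between collections of submodules in $\Ker(t_i)$ forming $(\overline{Q},\bfm)$-subrepresentations of $M$ and the same data for $M'$. Thus stability of $M$ and $M'$ are equivalent.

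For openness, I will first reformulate stability. For a point $(x,y,s,t)$, the family $(\Ker t_i)_{i\in Q_0}$ need not be a subrepresentation. However, the sum of all $(\overline{Q},\bfm)$-subrepresentations contained in $\bigoplus_i\Ker(t_i)$ is again such a subrepresentation. It is the largest one, namely the $k_{m_i}$-submodules
\[
N_i^{\max}=\bigcap_{w}\Ker\bigl(t_{t(w)}\circ M_w\bigr)\cap M_i,
\]
where $w$ ranges over paths in $\overline{Q}$ starting at $i$, including the trivial path. Indeed, each $M_a$ is $k_{m_{ij}}$-linear and $k_{m_j}$ acts through $k_{m_{ij}}$ compatibly, so this intersection is stable under the arrows and under the $k_{m_i}$-action. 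Since $M_i$ is finite-dimensional over $k$, it suffices to take paths of length at most $L:=\sum_i m_ir_i$. Here it is cleaner to regard $M_i$ as a $k$-vector space and to take kernels of $k$-linear maps.

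Stability then becomes the statement that $N^{\max}=0$. Equivalently, the $k$-linear map
\[
\Phi_{(x,y,s,t)}\colon\bigoplus_{i\in Q_0}M_i\longrightarrow\bigoplus_{i}\bigoplus_{w\colon i\to\cdot,\ \ell(w)\le L}k_{m_{t(w)}}^{\oplus f_{t(w)}},
\qquad v_i\mapsto\bigl(t_{t(w)}M_w(v_i)\bigr)_w
\]
must be injective. The entries of $\Phi$ are polynomial in the coordinates $(x,y,s,t)$, so injectivity is the nonvanishing of some maximal minor of size $\sum_i m_ir_i$. This is an open condition, and the stable locus is therefore open.

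The main obstacle is verifying that $N^{\max}$, defined via $k$-linear kernels of the composite path maps, really is a collection of $k_{m_i}$-submodules forming a subrepresentation. This needs the path maps to be compatible with the $\epsilon$-actions, using the identification in \cref{Lem/balancingBimodules} that each $M_a$ intertwines $\epsilon^{f_{ji}}$ and $\epsilon^{f_{ij}}$. It also needs the bound on path length, which I will get by the standard stabilisation argument: the descending chain obtained by intersecting over paths of length $\le n$ stabilises after at most $\dim_k$ steps.
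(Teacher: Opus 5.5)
Your $\GL_{\bfm,\bfr}$-invariance argument is fine. The openness argument, however, has a genuine gap at exactly the step you flagged, and the remedy you propose for it does not work.

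\textbf{The gap.} For an arrow $a\colon i\to j$ one only has $M_a(\epsilon^{f_{ji}}v)=\epsilon^{f_{ij}}M_a(v)$. So $\Ker(t_j\circ M_a)$ is only guaranteed to be stable under $\epsilon^{f_{ji}}$, not under $\epsilon$. Consequently your intersection over paths in $\overline{Q}$ need not consist of $k_{m_i}$-submodules. (Also, it is $k_{m_{ij}}$ that maps into $k_{m_j}$, not the other way round.)

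Here is a concrete counterexample. Take $Q=1\to 2$ with $\bfm=(2,1)$, $\bfr=(1,1)$ and $\bff=(0,1)$. Let $x_a\colon k_2\to k$ be the $k$-linear map $a+b\epsilon\mapsto b$, and set $y_a=0$ and $t_2=\mathrm{id}_k$.
\begin{itemize}
\item Stability forces $N_2=0$ and $N_1\subset\Ker(x_a)=k\cdot 1$. The only nonzero proper $k_2$-submodule of $k_2$ is $k\epsilon$, which is not contained in $k\cdot 1$, so $N_1=0$ and this point is stable.
\item Your map $\Phi$ kills $1\in M_1$. The trivial path lands in $k_{2}^{\oplus f_1}=0$, and every nontrivial path from $1$ begins with $a$, so it passes through $x_a(1)=0$.
\end{itemize}
Thus ``$\Phi$ injective'' is strictly stronger than stability as soon as adjacent multiplicities differ, which is precisely the setting of the paper. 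Openness of the injectivity locus of $\Phi$ then says nothing about openness of the stable locus.

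\textbf{The repair.} Enlarge the set of words $w$ to include the operators $\epsilon_i$ acting on each $M_i$, i.e.\ take words in the arrows of $\overline{Q}$ together with these loops. The resulting intersection is stable under the arrows and under $\epsilon$ by construction. It is therefore exactly the largest collection of $k_{m_i}$-submodules inside $\Ker(t)$ forming a $(\overline{Q},\bfm)$-subrepresentation. Your stabilisation bound on word length and your maximal-minor argument then go through unchanged.

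\textbf{Comparison with the paper.} With this fix your route differs from the paper's. The paper exhibits the unstable locus as the image of an incidence variety of nonzero submodule collections contained in $\Ker(t)$. These live in Grassmannians of $k_{m_i}$-submodules, which are closed in ordinary Grassmannians; the image is closed by properness. That formulation builds the $k_{m_i}$-module structure in automatically, which is exactly the point your version missed. Your corrected argument instead gives an explicit determinantal description of the unstable locus.
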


\begin{proof}
We claim that $R(\overline{Q_\bff},\widehat{\bfm};\widehat{\bfr})^{\st}$ is the open complement of the (closed) image of the morphism
\[
Z\subset
\bigsqcup_\bfd\mathrm{Gr}_{\bfd,\bfm}(M)\times R(\overline{Q_\bff},\widehat{\bfm};\widehat{\bfr})
\rightarrow
R(\overline{Q_\bff},\widehat{\bfm};\widehat{\bfr}),
\]
where $\mathrm{Gr}_{\bfd,\bfm}(M)$ is the Grassmannian of (not necessarily locally free) $k_{m_i}$-submodules $N_i\subset M_i$ of dimension $d_i$ over $k$ (for $i\in Q_0$) and $Z$ is the (say, reduced) closed subscheme of pairs $(N,x)$ such that $x_a(N_i)\subset N_j$ for all $a:i\rightarrow j$, $a\in Q_1$ and $N_i\subset\Ker(t_i)$. The disjoint union runs over the dimension vectors $\bfd$ such that $d_i\leq m_ir_i$ for all $i\in Q_0$.

Since $\mathrm{Gr}_{\bfd,\bfm}(M)$ is proper, the image of this morphism is closed. The closed points of its open complement correspond exactly to stable, rank $\widehat{\bfr}$ representations of $(\overline{Q_\bff},\widehat{\bfm})$, by construction.
\end{proof}

We now collect a few lemmas on the behaviour of stability under Hecke modifications. This is crucial for our inductive quotient procedure (see \cref{Sect/RecursiveQuotient}).

\begin{lemma}\label{Lem/stabilityCriterion}
Let $M$ be a locally free, rank $\widehat{\bfr}$ representation of $(\overline{Q_\bff},\widehat{\bfm})$. If $M$ is stable, then for all $i\in Q_0$, the homomorphism of $k_{m_i}$-modules
\[
M_{i,\mathrm{out}} \colon M_i\rightarrow\bigoplus_{\substack{a\in (\overline{Q_\bff})_1 \\ a \colon i\rightarrow j}}k_a\otimes_{k_{m_j}}M_j
\]
is injective.
\end{lemma}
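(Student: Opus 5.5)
Set $N_i := \Ker(M_{i,\mathrm{out}})$ at the chosen vertex $i$, and $N_j := 0$ for every $j \ne i$. The plan is to show this collection satisfies both hypotheses in \cref{def/stable}; stability then forces $N_i = 0$, which is exactly injectivity of $M_{i,\mathrm{out}}$. Since $i$ is arbitrary, this proves the lemma.

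First, $N_i$ is a $k_{m_i}$-submodule of $M_i$. This holds because $M_{i,\mathrm{out}}$ is $k_{m_i}$-linear: by \cref{Lem/balancingBimodules}, each component is obtained from a $k_{m_{ij}}$-linear arrow map via the adjunction isomorphism, which produces a $k_{m_i}$-linear map into $k_a\otimes M_j$.

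Second, $N_i \subset \Ker(t_i)$. The arrows $i \to \infty$ in $\overline{Q_\bff}$ are the duals $b_{i,f}^*$ of the framing arrows, and they are among the arrows out of $i$ appearing in the statement. Here $\widehat{m}_\infty = 1$, so $m_{i\infty} = 1$ and $k_{b_{i,f}^*} = k_{m_i}\otimes_k k$. Collecting these components of $M_{i,\mathrm{out}}$ gives a map $M_i \to k_{m_i}^{\oplus f_i}$ which corresponds to $t_i$ under \cref{Lem/balancingBimodules}. The point to check is that this correspondence is an isomorphism, so a vector in the kernel of the adjoint map is in the kernel of $t_i$. I expect this to be the main (if mild) obstacle: one must unwind the functorial isomorphism of \cref{Lem/balancingBimodules} carefully. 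Concretely, for $a\colon i\to j$ with $M_a$ $k_{m_{ij}}$-linear, the adjoint map is $v\mapsto \sum_e \epsilon^{e}\otimes M_a(\epsilon^{*}\ldots v)$, taken with respect to a $k_{m_{ij}}$-basis $\{\epsilon^e\}_{0\le e<f_{ji}}$ of $k_{m_i}$ together with the dual basis. So $v$ lies in the kernel of the adjoint map if and only if $M_a(\epsilon^{e}v)=0$ for all $e$. In particular $M_a(v)=0$, and hence $k_{m_i}v \subset \Ker M_a$.

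Third, the $N_j$ form a $(\overline{Q},\bfm)$-subrepresentation. Since $Q$ has no loops, every arrow $a\colon i\to j$ of $\overline{Q}$ has $j\ne i$, so we need $M_a(N_i)=0=N_j$. This follows from the same unwinding: every component of $M_{i,\mathrm{out}}$, including those for arrows of $\overline{Q}$, vanishes on $N_i$, and hence $M_a(N_i)=0$. Arrows into $i$ are automatically fine, because the $N_j$ with $j \ne i$ are zero.

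Stability then gives $N_i=0$, i.e.\ $M_{i,\mathrm{out}}$ is injective.
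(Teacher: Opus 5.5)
Your proposal is correct and follows the same route as the paper: both take $N_i=\Ker(M_{i,\mathrm{out}})=\sub_i(M)$ and $N_j=0$ for $j\ne i$, note that this collection lies in $\Ker(t_i)$ and forms a subrepresentation of the restriction to $(\overline{Q},\bfm)$, and conclude $N_i=0$ by stability. The only difference is that you check the subrepresentation and kernel conditions by unwinding the adjunction of \cref{Lem/balancingBimodules} by hand, whereas the paper simply cites \cref{Lem/propertiesK_iC_i}.
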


\begin{proof}
The claim follows from the fact that $\sub_i(M)=\Ker(M_{i,\mathrm{out}})$ is a subrepresentation of $M$ concentrated at vertex $i$ (see \cref{Lem/propertiesK_iC_i}). In particular it is a subrepresentation of $M$ under restriction to $(\overline{Q},\bfm)$, and is contained in $\Ker(t_i)$. Therefore, stability implies that $\Ker(M_{i,\mathrm{out}})=0$. \end{proof}

\begin{lemma}\label{Lem/stabilityHeckeModification}
Let $M$ be a locally free, rank $\widehat{\bfr}$ representation of $(\overline{Q_\bff},\widehat{\bfm})$. Suppose that $M'\subset M$ is a locally free subrepresentation such that $M/M'\simeq E_i^{\oplus r}$ for some $i\in Q_0$ and $r\geq1$. Then the following are equivalent:
\begin{enumerate}[label=\emph{\roman*})]
\item $M$ is stable,
\item $M'$ is stable and $M_{i,\mathrm{out}}$ is an injective homomorphism of $k_{m_i}$-modules.
\end{enumerate}
\end{lemma}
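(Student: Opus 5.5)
The implication i) $\Rightarrow$ ii) is the easy direction. Injectivity of $M_{i,\mathrm{out}}$ follows from \cref{Lem/stabilityCriterion}. For stability of $M'$, I take a collection $N_j\subset M'_j$ that forms a $(\overline{Q},\bfm)$-subrepresentation of $M'$ and satisfies $N_j\subset\Ker(t'_j)$. Since $M'\subset M$ is a subrepresentation of $(\overline{Q_\bff},\widehat{\bfm})$, the maps $t'_j$ are the restrictions of $t_j$. Hence the $N_j$ also form a $(\overline{Q},\bfm)$-subrepresentation of $M$ contained in $\Ker(t_j)$, and stability of $M$ gives $N=0$. I only need to note that $M'$ has rank $\widehat{\bfr}-r\bfalpha_i$, still with rank $1$ at $\infty$, so the framing part is unchanged.

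For ii) $\Rightarrow$ i), I take $N\subset M$ destabilising: a $(\overline{Q},\bfm)$-subrepresentation with $N_j\subset\Ker(t_j)$ for all $j$. Set $N':=N\cap M'$. This is a $(\overline{Q},\bfm)$-subrepresentation of $M'$ (an intersection of subrepresentations), still contained in the kernels of the $t_j$. Stability of $M'$ then forces $N'=0$. Since $M/M'$ is concentrated at vertex $i$, we have $N_j=N'_j=0$ for all $j\neq i$, so $N$ is concentrated at vertex $i$.

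It remains to show $N_i=0$. Because $N$ is a $(\overline{Q},\bfm)$-subrepresentation concentrated at $i$, every arrow $a\colon i\to j$ of $\overline{Q}$ sends $N_i$ into $N_j=0$ (there are no loops, so $j\neq i$). The remaining arrows out of $i$ in $\overline{Q_\bff}$ are the framing arrows $i\to\infty$, which make up $t_i$, and $t_i$ vanishes on $N_i$ by assumption. Therefore $N_i\subset\Ker(M_{i,\mathrm{out}})$, where $M_{i,\mathrm{out}}$ is taken with respect to all arrows of $\overline{Q_\bff}$ as in \cref{Lem/stabilityCriterion}. Injectivity then gives $N_i=0$, so $M$ is stable.

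The only point needing care is the identification of $\Ker(M_{i,\mathrm{out}})$ with the set of elements of $M_i$ killed by all outgoing arrows, including the framing ones. This uses the functorial isomorphisms of \cref{Lem/balancingBimodules}: the adjoint map $M_i\to k_a\otimes M_j$ kills $v$ exactly when $M_a(k_{m_i}v)=0$, and $N_i$ is a $k_{m_i}$-submodule. Beyond that I expect no real obstacle; the hypothesis $M/M'\simeq E_i^{\oplus r}$ is used only to localise $N$ at vertex $i$.
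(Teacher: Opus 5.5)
Your proof is correct and follows the same route as the paper. For i) $\Rightarrow$ ii) you use \cref{Lem/stabilityCriterion} and observe that a destabilising collection for $M'$ also destabilises $M$; for ii) $\Rightarrow$ i) you use stability of $M'$ to kill $N\cap M'$, so that $N$ is concentrated at $i$, and then conclude $N_i\subset\Ker(M_{i,\mathrm{out}})=0$. You also make explicit two steps the paper leaves implicit: that $N_j=0$ for $j\neq i$, and the identification of $\Ker(M_{i,\mathrm{out}})$ via \cref{Lem/balancingBimodules}.
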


\begin{proof}
Let us first prove i) $\Rightarrow$ ii). Suppose that $M$ is stable. Then $M'$ is also stable, since any collection of submodules $N_j\subset M'_j$ satisfying $N_j\subset\Ker(t_j)$ for all $j\in Q_0$ and forming a subrepresentation of $M'$ under restriction to $(\overline{Q},\widehat{\bfm})$ has the same property with respect to $M$. Moreover, stability implies $\Ker(M_{i,\mathrm{out}})=0$ by \cref{Lem/stabilityCriterion}.

Conversely, suppose that $M'$ is stable and $M_{i,\mathrm{out}}$ is injective. Consider a collection of submodules $N_j\subset M_j$ satisfying $N_j\subset\Ker(t_j)$ for all $j\in Q_0$ and forming a subrepresentation of $M$ in restriction to $(\overline{Q},\widehat{\bfm})$. Then, since $M'$ is stable, we have $N_j\cap M'_j=0$ for all $j\in Q_0$. Therefore, we have $N_i\subset\Ker(M_{i,\mathrm{out}})$, hence $N_i=0$. This shows that $M$ is stable.
\end{proof}

A converse to \cref{Lem/stabilityCriterion} holds for E-filtered representations.

\begin{lemma}\label{Lem/nilpotentStabilityCondition}
Let $M$ be a locally free, rank $\widehat{\bfr}$ representation of $(\overline{Q_\bff},\widehat{\bfm})$. Suppose that $M$ is E-filtered. If $M_{i,\mathrm{out}}$ is injective for all $i\in Q_0$, then $M$ is stable.
\end{lemma}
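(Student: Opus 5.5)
The plan is a direct argument by contradiction, using only the E-filtration to locate a vector killed by every outgoing arrow. Suppose $M$ is E-filtered with all $M_{i,\mathrm{out}}$ injective, and let $(N_i)_{i\in Q_0}$ be submodules with $N_i\subset\Ker(t_i)$ forming a subrepresentation of the restriction of $M$ to $(\overline{Q},\bfm)$. I want to show every $N_i=0$. Assume instead that some $N_i\neq 0$.

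\textbf{Step 1: a filtration of the unframed restriction.} Take an E-filtration $0=M_{n+1}\subsetneq\ldots\subsetneq M_0=M$ as in \cref{def/E-filteredRep}, for $M$ as a representation of $(\overline{Q_\bff},\widehat{\bfm})$. Restrict each $M_l$ to the vertices of $Q_0$. This gives a filtration of $M|_{(\overline{Q},\bfm)}$ whose successive subquotients are either some $E_i$ with $i\in Q_0$ or $0$; the zero quotients are exactly the steps where $E_\infty$ occurs.

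\textbf{Step 2: arrows push the filtration down.} The key observation uses that $Q$, hence $\overline{Q}$, has no loop arrows. Let $a\colon i\to j$ be an arrow with $j\neq i$. Then $M_l/M_{l+1}$ is concentrated at a single vertex. Applying the arrow $a$ and passing to $M_l/M_{l+1}$ therefore gives zero, so $M_a\bigl((M_l)_i\bigr)\subset (M_{l+1})_j$ for every $l$.

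\textbf{Step 3: choose the deepest level meeting $N$.} Let $l$ be maximal such that $N_i\cap (M_l)_i\neq0$ for some $i\in Q_0$. Such $l$ exists since $N\neq0$, and $l\leq n$. Fix such an $i$ and put $V:=N_i\cap(M_l)_i$, a non-zero $k_{m_i}$-submodule.
\begin{itemize}
\item For every arrow $a\colon i\to j$ of $\overline{Q}$, Step 2 and the fact that $N$ is a subrepresentation give $M_a(V)\subset N_j\cap (M_{l+1})_j$. This is zero by the maximality of $l$.
\item For the framing arrows, $t_i(V)=0$ because $N_i\subset\Ker(t_i)$.
\end{itemize}

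\textbf{Step 4: conclude.} It remains to see that $V\subset\Ker(M_{i,\mathrm{out}})$, which contradicts injectivity.

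This is the one point needing care: $M_{i,\mathrm{out}}$ is defined through the adjunction isomorphisms of \cref{Lem/balancingBimodules}, so it is not literally the tuple of the maps $M_a$. Since $V$ is a $k_{m_i}$-submodule, restricting along $V\hookrightarrow M_i$ is compatible with the functorial isomorphism of \cref{Lem/balancingBimodules}. Hence the composite $V\to k_a\otimes M_j$ corresponds to $M_a|_V=0$, and therefore vanishes. Applying this to each summand shows $M_{i,\mathrm{out}}|_V=0$.

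This contradicts the injectivity of $M_{i,\mathrm{out}}$, so $N=0$ and $M$ is stable.
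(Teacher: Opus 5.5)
Your proof is correct, but it is organised differently from the paper's. The paper inducts on the rank. It splits off a top piece $M/M'\simeq E_i^{\oplus r}$, assumes $M'$ stable by induction, and concludes with \cref{Lem/stabilityHeckeModification}; the proof of that lemma shows that a destabilising collection $N$ meets $M'$ trivially and therefore lies in $\Ker(M_{i,\mathrm{out}})$. You instead fix one E-filtration and make a single extremal choice, the deepest level $l$ meeting $N$. You then use that, in the absence of loops, every arrow of $\overline{Q}$ maps $(M_l)_i$ into $(M_{l+1})_j$. This is essentially the paper's induction unrolled, with the no-loop hypothesis entering at the same place, but it is more self-contained in two ways. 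First, you never need the intermediate subrepresentations to satisfy the hypotheses or to be stable; the paper's induction tacitly uses that each $M'_{j,\mathrm{out}}$ is again injective, being a restriction or corestriction of $M_{j,\mathrm{out}}$. Second, steps with subquotient $E_\infty$ are absorbed uniformly as repeated terms of the restricted filtration, whereas \cref{Lem/stabilityHeckeModification} is only stated for $i\in Q_0$. Your care in Step 4 is well placed: $\Ker(M_{i,\mathrm{out}})$ is the largest $k_{m_i}$-submodule annihilated by all the $M_a$, which can be strictly smaller than $\bigcap_a\Ker(M_a)$, so it matters that $V$ is a $k_{m_i}$-submodule. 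The paper's version, in turn, is shorter because it reuses \cref{Lem/stabilityHeckeModification}, which it needs anyway for the inductive quotient construction and for \cref{Prop/crystalOperatorRange}.
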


\begin{proof}
We prove the claim by induction on $\rk M$. Since $M$ is E-filtered, there exists a (locally free) subrepresentation $M'\subset M$ such that $M/M'\simeq E_i^{\oplus r}$ for some integer $r\geq1$. By induction, we may assume that $M'$ is stable. Then the conclusion follows from \cref{Lem/stabilityHeckeModification}.\end{proof}

This characterisation of stable E-filtered representations has the following favourable consequence.

\begin{proposition}\label{Prop/trivialStabilisers}
The quotient stack $\left[\Lambda_{Q_\bff,\widehat{\bfm};\widehat{\bfr}}^\st/\GL_{\bfm,\bfr}\right]$ is an algebraic space.
\end{proposition}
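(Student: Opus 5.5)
To prove \cref{Prop/trivialStabilisers}, we show that $\GL_{\bfm,\bfr}$ acts on $\Lambda_{Q_\bff,\widehat{\bfm};\widehat{\bfr}}^\st$ with trivial scheme-theoretic stabilisers. A quotient stack $[X/G]$ of a scheme by a smooth affine group is an algebraic stack. It is an algebraic space precisely when its inertia is trivial, that is, when the stabiliser group schemes $\mathrm{Stab}_G(x)$ are trivial for all geometric points $x$; see e.g.\ the standard criterion that an algebraic stack with trivial (unramified and monomorphic) diagonal is an algebraic space. Since $\GL_{\bfm,\bfr}$ is smooth and the stabiliser is a closed subgroup scheme, it suffices to show that both the group of $k$-points of the stabiliser and its Lie algebra are trivial at every closed point. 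Triviality of the Lie algebra rules out non-reduced stabilisers in positive characteristic.

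Fix a closed point $(x,y,s,t)$ with associated representation $M$. An element $g=(g_i)\in\GL_{\bfm,\bfr}(k)$ stabilises it if and only if $g$ is an automorphism of $M$ restricted to $(\overline{Q},\bfm)$ with $t_ig_i=t_i$ and $g_is_i=s_i$. Set $h:=g-\mathrm{id}$. Then $h$ is an endomorphism of the $(\overline{Q},\bfm)$-representation underlying $M$, and it satisfies $t_i h_i=0$ for all $i$. Hence the modules $N_i:=\Ima(h_i)\subset M_i$ are $k_{m_i}$-submodules contained in $\Ker(t_i)$, and they form a subrepresentation of $M$ restricted to $(\overline{Q},\bfm)$, because $h$ commutes with all arrows $x_a,y_a$. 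Stability (\cref{def/stable}) then forces $N_i=0$, so $h=0$ and $g=\mathrm{id}$.

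The same argument applies verbatim to the Lie algebra. An element $\xi\in\gl_{\bfm,\bfr}$ of the infinitesimal stabiliser commutes with $x,y$ and satisfies $t\xi=0$ and $\xi s=0$. Taking images again yields a subrepresentation inside $\Ker(t)$, so $\xi=0$. More generally, one can run the argument for $R$-points with $R$ an arbitrary $k$-algebra by working fibrewise, but the $k$-points together with the tangent space suffice.

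The step requiring the most care is the formal passage from trivial stabilisers to "algebraic space", not the stability computation itself. One must check the following:
\begin{enumerate}[label=\roman*)]
\item $\Lambda^\st$ is a (reduced, locally closed) $\GL_{\bfm,\bfr}$-invariant subscheme, which follows from \cref{Lem/opennessStability} together with the invariance of $\Lambda$;
\item the diagonal of $[X/G]$, i.e.\ the morphism $G\times X\to X\times X$, is a monomorphism.
\end{enumerate}
For the second point, trivial stabiliser group schemes give that the action map is a monomorphism. This uses the fact that for a group action, the fibre of $G\times X\to X\times X$ over $(x,gx)$ is a torsor under $\mathrm{Stab}(x)$. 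An algebraic stack whose diagonal is a monomorphism is an algebraic space. Only the framing map $t$ (the arrows $i\to\infty$) is used here, matching the form of the stability condition.
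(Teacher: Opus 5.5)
Your argument is correct, but it takes a different route from the paper. The paper also reduces, via Conrad's criterion, to triviality of stabiliser group schemes at geometric points. It then uses the E-filtration of $M$ and the absence of loops. By \cref{Lem/stabilityCriterion} each $M_{j,\mathrm{out}}$ is injective. Composing these maps along the filtration, which terminates by nilpotency, gives an injective map $\Phi_i$ from $M_i$ into an iterated tensor product built only from the framing modules $k_{m_j}^{\oplus f_j}$, on which $\GL_{\bfm,\bfr}$ acts trivially. Hence $\Phi_i=\Phi_i\circ g_i^{-1}$, and a left inverse of $\Phi_i$ (defined over $\kappa$) forces $g=e$ directly for $R$-valued points.

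You instead apply the definition of stability to $\Ima(g-\mathrm{id})$, which is the classical Nakajima argument. It uses neither E-filtrations nor the no-loop hypothesis. So it proves the stronger and more elementary statement that $\GL_{\bfm,\bfr}$ acts with trivial stabilisers on the whole open set $R(\overline{Q_\bff},\widehat{\bfm};\widehat{\bfr})^\st$. What the paper's construction buys is an explicit rigidifying map with a left inverse, which handles arbitrary $R$-points in one step. You can get the same without the Lie algebra. The stabiliser of a $\kappa$-point is $\GL_{\bfm,\bfr}\cap(\mathrm{id}+V)$, where $V$ is the linear space of $h$ with $h_jM_a=M_ah_i$, $t_ih_i=0$ and $h_is_i=0$. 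Your argument shows $V(\kappa)=0$, and since the equations are linear and defined over $\kappa$, it follows that $V=0$ as a scheme.

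Two small points need tightening. First, you should work at geometric points over an arbitrary algebraically closed field $\kappa\supset k$, not just at closed points; your linear algebra goes through verbatim. Second, the fibrewise torsor description alone does not show that $G\times X\to X\times X$ is a monomorphism. Either cite the geometric-point criterion as the paper does, or use the following. The map $h\mapsto(h_jM_a-M_ah_i,\,t_ih_i,\,h_is_i)$ is injective on the fibre at every point, hence locally split injective. Therefore stabilisers are trivial for all $T$-valued points of $X$.
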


\begin{proof}
By \cite[Thm.\ 2.2.5]{Con06}, it is sufficient to show that the stabiliser group schemes of geometric points of $\left[\Lambda_{Q_\bff,\widehat{\bfm};\widehat{\bfr}}^\st/\GL_{\bfm,\bfr}\right]$ are all trivial.

Let $\kappa$ be an algebraically closed field containing $k$, let $(x,y,s,t)\in\Lambda_{Q_\bff,\widehat{\bfm};\widehat{\bfr}}^\st(\kappa)$ and $M=M(x,y,s,t)$. Since $M$ is $E$-filtered, there exists a chain of (locally free) subrepresentations
\[
M\supsetneq M^1\supsetneq\ldots\supsetneq M^n=0
\]
such that, for all $1\leq l\leq n$, we have $M^{l-1}/M^l\simeq E_{i_l}$ for some $i_l\in Q_0$. Since $Q$ has no loop arrows, \cref{Lem/stabilityCriterion} yields a chain of injective $k_{m_i}$-linear maps for every $i\in Q_0$:
\[
\begin{split}
\kappa_{m_i}^{\oplus r_i} &
\hookrightarrow
\kappa_{m_i}^{\oplus f_i}\oplus\bigoplus_{a:i\rightarrow j_1}\kappa_a\otimes_{j_1} M_{j_1}^1 \hookrightarrow \ldots \\
& \hookrightarrow
\kappa_{m_i}^{\oplus f_i}\oplus
\bigoplus_{a_1:i\rightarrow j_1}
\kappa_{a_1}\otimes_{j_1}
\left(
\kappa_{m_{j_1}}^{\oplus f_{j_1}}\oplus
\ldots
\left(
\kappa_{m_{j_{n-1}}}^{\oplus f_{j_{n-1}}}
\oplus\bigoplus_{a_n:j_{n-1}\rightarrow j_n}
\underbrace{\kappa_{a_n}\otimes_{j_n}M_{j_n}^n}_{=0}
\right)
\right)
,
\end{split}
\]
where we write $\otimes_i$ for $\otimes_{\kappa_{m_i}}$ to avoid clutter. The above chain of linear maps is obtained by composing $M_{j_l,\mathrm{out}}$ for $ l=0,\ldots, n-2$ (with the convention $j_0=i$). Note that all tensor products involve free modules, hence preserve injectivity of linear maps.

We have thus obtained an injective $\kappa_{m_i}$-linear map $\Phi_i=\Phi_i(x,y,s,t)$ defined by:
\[
\Phi_i:
\kappa_{m_i}^{\oplus r_i}
\hookrightarrow
\kappa_{m_i}^{\oplus f_i}
\oplus
\bigoplus_{a_1:i\rightarrow j_1}\kappa_{a_1}\otimes_{j_1}
\left(
\kappa_{m_{j_1}}^{\oplus f_{j_1}}
\oplus
\ldots
\left(
\ldots
\bigoplus_{a_{n-1}:j_{n-2}\rightarrow j_{n-1}}\kappa_{a_{n-1}}\otimes_{j_{n-1}}\kappa_{m_{j_{n-1}}}^{\oplus f_{j_{n-1}}}
\right)
\right)
.
\]
Now let $R$ be a $\kappa$-algebra. If $g\cdot (x,y,s,t)=(x,y,s,t)$ for some $g\in\GL_{\bfm,\bfr}(R)$, then we have
\[
\Phi_i(x,y,s,t)=\Phi_i(x,y,s,t)\circ g_i^{-1}
\]
for all $i\in Q_0$. Since $\Phi_i(x,y,s,t)$ is injective, it admits a left-inverse (over $\kappa$, hence over $R$), we obtain $g=e$ as required.
\end{proof}

Finally, we show that for stable E-filtered representations of $(\overline{Q_\bff},\widehat{\bfm})$, the framing maps starting from the vertex $\infty$ vanish. This is crucial to relate the geometric constructions of $B(C;\infty)$ and $B(C;\lambda)$ (see \cref{Sect/selectCrystalComponents}). We will also use this for the base case of our quotient construction (see \cref{prop/quotientBaseStep}).

\begin{lemma}\label{Lem/framingVanishing}
Let $M$ be a stable, E-filtered representation of $(\overline{Q_\bff},\widehat{\bfm})$ of rank $\widehat{\bfr}$. Then $M$ admits a subrepresentation isomorphic to $E_\infty$. In particular, $s_i=0$ for all $i\in Q_0$.
\end{lemma}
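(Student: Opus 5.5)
The plan is to take the bottom step of an E-filtration of $M$ and show that stability forces it to sit at the framing vertex $\infty$. Since $M$ is E-filtered as a representation of $(\overline{Q_\bff},\widehat{\bfm})$, \cref{def/E-filteredRep} gives a chain of subrepresentations
\[
0=M^{n}\subsetneq M^{n-1}\subsetneq\ldots\subsetneq M^0=M
\]
with subquotients $M^{l-1}/M^{l}\simeq E_{i_l}$ for some $i_l\in Q_0\sqcup\{\infty\}$. In particular, the smallest non-zero term $N:=M^{n-1}$ is a subrepresentation of $M$ isomorphic to $E_j$ for some vertex $j:=i_n$. I would then split into the cases $j\in Q_0$ and $j=\infty$, and rule out the first.

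Suppose $j\in Q_0$. Then $N$ is concentrated at the vertex $j$, with $N_j\simeq k_{m_j}\neq0$. The quiver $\overline{Q_\bff}$ has no loop arrows, since $Q$ has none and the framing arrows join distinct vertices. Hence every arrow $a\colon j\rightarrow j'$ of $\overline{Q_\bff}$, including the framing arrows collected in $t_j$, has $j'\neq j$. Because $N$ is a subrepresentation, each such arrow maps $N_j$ into $N_{j'}=0$, so $N_j\subset\Ker(M_{j,\mathrm{out}})=\sub_j(M)$. But $M$ is stable, and \cref{Lem/stabilityCriterion} says that $M_{j,\mathrm{out}}$ is injective. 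Therefore $N_j=0$, a contradiction. (Equivalently, one can apply \cref{def/stable} directly to the collection $N_{j'}:=N\cap M_{j'}$ for $j'\in Q_0$, which lies in $\Ker(t)$ and is a $(\overline{Q},\bfm)$-subrepresentation.) Hence $j=\infty$, and $M$ has a subrepresentation $N\simeq E_\infty$.

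For the final assertion, note that $\widehat{r}_\infty=1$ and $\widehat{m}_\infty=1$, so $N_\infty\simeq k$ must equal $M_\infty$. Since $N$ is a subrepresentation concentrated at $\infty$, each arrow $b_{i,f}\colon\infty\rightarrow i$ maps $N_\infty=M_\infty$ into $N_i=0$. These arrows are exactly the components of the framing maps, so $s_i=0$ for all $i\in Q_0$.

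I expect no serious obstacle. The only points to check carefully are that the filtration lives in the framed doubled quiver, so that its bottom term may sit at $\infty$, and that the absence of loops is exactly what forces a subrepresentation concentrated at a vertex into the kernel of the outgoing map.
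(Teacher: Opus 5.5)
Your proof is correct and is essentially the paper's argument. Both rest on the same observation: stability kills every subrepresentation of $M$ that vanishes at $\infty$, since such a subrepresentation lies in $\Ker(t)$ and restricts to a $(\overline{Q},\bfm)$-subrepresentation. The paper applies \cref{def/stable} to the part $M''$ lying below the (unique) step $M'/M''\simeq E_\infty$ of the filtration to get $M''=0$, whereas you apply the same reasoning to the bottom step (via \cref{Lem/stabilityCriterion}, or directly as in your parenthetical) to rule out $E_j$ with $j\in Q_0$.
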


\begin{proof}
By assumption, $M$ is E-filtered, so there exist locally free subrepresentations $M''\subset M'\subset M$ such that $M'/M''\simeq E_\infty$. Then, in restriction to $(\overline{Q},\bfm)$, the modules $M''_i$ for $i\in Q_0$ form a subrepresentation of $M$ and we further have $M''_i\subset\Ker(t_i)$. Therefore, stability implies that $M''=0$, hence $M'\simeq E_\infty$ is a subrepresentation.
\end{proof}

\subsection{A toy model for the quotienting procedure}\label{Sect/toyModel}

For natural numbers $n$ and $N$ with $n <N$,  consider the action of the reductive group
\[ \GL_n \curvearrowright X_n:=\mathrm{Hom}_k(k^{\oplus n},k^{\oplus N})\]
by $g \cdot x = x g^{-1}$. For the character $\theta := \det^{-1} \colon \GL_n \rightarrow \GG_m$, the twisted affine GIT quotient is a Zariski-locally trivial quotient
\[ X_n^{\theta-(s)s}  = X_n^{\mathrm{inj}}:=\{ x \in X_n : \rk(x) = n \} \rightarrow  X_n^{\mathrm{inj}}/\GL_n \cong \GL_N /P_{n,N}\cong \mathrm{Gr}({n,N}) \]
where $\mathrm{Gr}({n,N})$ is the Grassmannian of $n$-dimensional subspaces in $k^{\oplus N}$ and $P_{n,N} < \GL_N$ is the parabolic subgroup of automorphisms preserving the inclusion $k^{\oplus n} \subset k^{\oplus N}$ as the first $n$ coordinates. The above isomorphisms holds as we can embed $X_n^{\mathrm{inj}} \hookrightarrow \GL_N$ as the first $n$ columns, and then the action induces an isomorphism $X_n^{\mathrm{inj}} \times^{\GL_n} P_{n,N} \cong \GL_N$. 

We will now restrict this action to a parabolic subgroup of $\GL_n$ and construct a quotient by this non-reductive action. For this, we fix decompositions $n = n' + n''$ and let $P := P_{n',n}$ be the parabolic subgroup preserving $k^{\oplus n'} \subset k^{\oplus n}$. More precisely, this is the parabolic group associated to the one-parameter subgroup
\[ \lambda_{n'}(t) = \left( \begin{array}{cc} t^0 I_{n'} & 0 \\ 0 & t^{-1}I_{n''} \end{array} \right)\]
and $P$ corresponds to block upper triangular matrices. We have $P = U \rtimes (G' \times G'')$, where $U$ is its unipotent radical and the Levi factors are given by $G' = \GL_{n'}$ and $G''= \GL_{n''}$. We will write points in $X_n = X_{n'} \times X_{n''}$ as pairs $x = (x',x'')$, and let $f$ denote the projection onto the first factor. Then there is an equivariant action
\begin{equation}\label{eqn:toy example equivariant action set up}
(P \twoheadrightarrow G' \times G'') \curvearrowright (X_{n} \stackrel{f}{\rightarrow} X_{n'})
\end{equation}
where $G''$ acts trivially on $X_{n'}$. We will construct a quotient of the $P$-action on $X$ by explicitly finding slices for the unipotent group action, and we will show this coincides with the relative non-reductive GIT quotient in \cite{HHJ}.

\begin{lemma}\label{lemma toy model without mult}
For the action of the parabolic subgroup $P$ on $X_n$, the open subset $X_n^{\mathrm{inj}}$ admits a Zariski-locally trivial $P$-quotient
\[ X_n^{\mathrm{inj}} \rightarrow \mathrm{Fl}(n',n,N)\]
where $\mathrm{Fl}(n',n,N)$ is the flag variety parametrising flags in $k^{\oplus N}$ of dimension $n' < n < N$.
\end{lemma}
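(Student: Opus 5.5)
The plan is to construct the quotient directly as a homogeneous space, mirroring the Grassmannian argument in the preceding paragraph. The map $X_n^{\mathrm{inj}} \to \mathrm{Fl}(n',n,N)$ sends $x=(x',x'')$ to the flag $\Ima(x')\subset\Ima(x)\subset k^{\oplus N}$. This map is well defined because $x$ is injective, so $\dim\Ima(x')=n'$ and $\dim\Ima(x)=n$. It is $P$-invariant: writing $g\in P$ in block upper triangular form, the first $n'$ columns of $xg^{-1}$ are the columns of $x'$ multiplied by an invertible $n'\times n'$ block, so their span is $\Ima(x')$. Likewise the span of all the columns is unchanged.

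Next I would show that the fibres are exactly the $P$-orbits and that $P$ acts freely on $X_n^{\mathrm{inj}}$. Freeness holds already for $\GL_n$, since $xg^{-1}=x$ together with injectivity of $x$ forces $g=e$, scheme-theoretically, by using a left inverse of $x$. For the orbit statement, suppose $x,y$ are two injective maps inducing the same flag. Then $y=xh$ for a unique $h\in\GL_n$, and $h$ preserves $k^{\oplus n'}$ because $\Ima(y|_{k^{n'}})=\Ima(x|_{k^{n'}})$. Hence $h\in P$.

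For Zariski-local triviality, I would mirror the identification $X_n^{\mathrm{inj}}\times^{\GL_n}P_{n,N}\cong \GL_N$ and cover $\mathrm{Fl}(n',n,N)$ by the standard big-cell charts. Fix a flag $V'\subset V$ together with a complementary coordinate decomposition $k^{\oplus N}=A'\oplus A''\oplus A'''$, chosen so that $V'$ is a graph over $A'$ and $V$ is a graph over $A'\oplus A''$. On the open set $\mathcal U$ of flags transverse to the corresponding complements, I would write down an explicit section $\sigma\colon \mathcal U\to X_n^{\mathrm{inj}}$. It sends a flag to the matrix whose columns are the graph bases: the first $n'$ columns are $e_{A'}+(\text{graph part})$, and the remaining $n''$ columns are $e_{A''}+(\text{graph part})$ with no $A'$-component. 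This gives a slice.

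The map $\mathcal U\times P\to f^{-1}(\mathcal U)$, $(F,g)\mapsto \sigma(F)g^{-1}$, is then an isomorphism. Its inverse is given by the Gaussian-elimination formula $g^{-1}=(\text{restriction of }x\text{ to the coordinates }A'\oplus A'')$, and this matrix is block upper triangular precisely because of the graph conditions. These local trivialisations show that $X_n^{\mathrm{inj}}\to\mathrm{Fl}(n',n,N)$ is a Zariski-locally trivial principal $P$-bundle, and in particular a categorical quotient.

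The main obstacle is verifying that this inverse lands in $P$ and depends regularly on $x$. For $x\in f^{-1}(\mathcal U)$, the $(A'\oplus A'')$-minor of $x$ is invertible, and its lower-left block vanishes modulo the graph normalisation. I expect this to reduce to a block LU decomposition, valid exactly on the chart. Alternatively, I could argue that $X_n^{\mathrm{inj}}\to \mathrm{Gr}(n,N)$ is a principal $\GL_n$-bundle, and that $\mathrm{Fl}(n',n,N)\to\mathrm{Gr}(n,N)$ is the associated $\GL_n/P=\mathrm{Gr}(n',n)$-bundle. Then $X_n^{\mathrm{inj}}\to X_n^{\mathrm{inj}}/P$ is Zariski-locally trivial, since $\GL_n\to\GL_n/P$ is Zariski-locally trivial via big cells.
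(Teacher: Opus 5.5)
Your proof is correct, but it follows a different route from the paper.

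\textbf{The paper's route.} The paper does not start from the map to $\mathrm{Fl}(n',n,N)$. It works relative to the projection $X_n\to X_{n'}$ and covers $X_{n'}^{\mathrm{inj}}$ by the opens $Z_S$. It removes the unipotent radical $U$ with the explicit slice $Z_S\times\Hom(k^{\oplus n''},S)$, using the unique $u$ with $\Ima(x''-x'u)\subset S$. It then quotients by $G''$ to get $Z_S\times\mathrm{Gr}(n'',S)$, and by $G'$ over $Z_S/G'\subset\mathrm{Gr}(n',N)$. It checks that these injective loci exhaust $X_n^{\mathrm{inj}}$, and only at the end identifies the result with $\mathrm{Fl}(n',n,N)$ via $\GL_N$-homogeneity.

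\textbf{Your route.} You start from the invariant map $x\mapsto(\Ima x'\subset\Ima x)$, prove freeness and that fibres are orbits, and trivialise over big-cell charts. Your alternative via $X_n^{\mathrm{inj}}\times^{\GL_n}\GL_n/P$ over $\mathrm{Gr}(n,N)$ is the cleanest version.

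\textbf{One slip in the chart argument.} The $A'\oplus A''$ rows $\pi x$ of $x$ do not lie in $P$. Indeed $\pi\sigma(F)=\bigl(\begin{smallmatrix}I&0\\ \psi&I\end{smallmatrix}\bigr)$, where $\psi$ is the $A''$-part of the graph of $V'$. The correct inverse is $g^{-1}=(\pi\sigma(F(x)))^{-1}\pi x$, the upper factor of the block LU decomposition of $\pi x$. This is regular exactly where $\pi x$ and $\pi_{A'}x'$ are invertible, i.e.\ on the chart, so the LU fix you anticipated is the right one.

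\textbf{Comparison.} Your route is shorter and self-contained for this lemma. What the paper's staging buys is the relative structure. The $(U\rtimes G'')$-quotient is $G'$-equivariant over $X_{n'}^{\mathrm{inj}}$, with Grassmannian fibres $\mathrm{Gr}(n'',N-n')$. This is exactly statement i) of \cref{Lem/toyJetParabolicQuotient}, and it is the ingredient pulled back in \cref{Lem/partialQuotient}. There $X_{n'}^{\mathrm{inj}}$ is replaced by $\Lambda^{\st}_{F',0\bfalpha_i}$, whose quotient is known to exist only by induction and is not homogeneous.

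Your argument as written yields only the full $P$-quotient. Your associated-bundle variant fibres $\mathrm{Fl}(n',n,N)$ over $\mathrm{Gr}(n,N)$ with fibre $\mathrm{Gr}(n',n)$. That is the other projection from the one mirrored by the tower in \cref{Rem/fibresRecursiveQuotient}.
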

\begin{proof}
We simplify notation and write $X:=X_n \stackrel{f}{\rightarrow} Z:=X_{n'}$. First we perform a base change over the stable locus $Z':= Z^{\theta-s}(G') = X_{n'}^{\mathrm{inj}}$ and denote this by $f' \colon X' \rightarrow Z'$. We claim there is a Zariski-locally trivial $U$-quotient $q_U \colon X' \rightarrow Y := X'/U$ that is relative to $Z'$ by producing slices locally. The stable locus $Z'$ is covered by open subschemes 
\[ Z_S = \{ z \in Z: S \oplus \mathrm{Im}(z) = k^{\oplus N} \}, \]
where $\dim S = N -n'$. Then we claim that for the $U$-action on $X_S:=X \times_Z Z_S$, there is a $U$-slice in $X_S$ relative to $Z_S$ given by
\[ Y_S := Z_S \times F_S, \quad \text{where} \quad F_S:=\mathrm{Hom}(k^{\oplus n''}, S).\]
Indeed, the action of $u \in U$ on $x = (x' | x'') \in X_S$ is given by $ u \cdot x = (x'| x'' - x'u)$, 
and so there is a unique element $u \in U$ such that the image of $(u \cdot x)''$ lies in the complement $S$ to the image of $x'$. In particular, we obtain trivial $U$-quotients relative to $Z_S$
\[ X_S \rightarrow X_Z / U  \cong Y_S \rightarrow Z_S\]
which glue to give the claimed quotient $q_U$. 

The open locus $Y^{\mathrm{inj}}_S:= Z_S \times F_S^{\mathrm{inj}} \subset Y_S$ admits a Zariski-locally trivial $G''$-quotient relative to $Z_S$, with fibre $\mathrm{Gr}(n'',S)$. The group $G'$ acts freely and equivariantly on the morphism
\[ Y^{\mathrm{inj}}_S/G'' = Z_S \times \mathrm{Gr}(n'',S) \rightarrow Z_S\]
with Zariski-locally trivial quotient 
\[ Y^{\mathrm{inj}}_S/(G' \times G'') \cong \mathrm{Gr}(n',N) \times \mathrm{Gr}(n'',S) \hookrightarrow \mathrm{Fl}(n',n,N)\]
relative to the open subset $Z_S/G' \subset \mathrm{Gr}(n',N)$. We claim that 
\begin{equation}\label{eqn:HM type description for toy ex}
    X_n'':=\bigcup_S q_U^{-1}(Y_S^{\mathrm{inj}}) = X_n^{\mathrm{inj}}
\end{equation}
and thus $X_n^{\mathrm{inj}}$ has a Zariski-locally trivial $P$-quotient given by 
\[ X_n^{\mathrm{inj}} / P \cong \GL_N / P_{n',n,N} \cong \mathrm{Fl}(n',n,N),\]
where the first isomorphism follows as $\GL_N \cong X_n^{\mathrm{inj}} \times^P P_{n',n,N}$. 
To prove \cref{eqn:HM type description for toy ex}, we need to show for $x = (x',x'') \in X'$ that $x$ is injective if and only if $(u \cdot x)''$ is injective for all $u \in U$. We note that if there exists $u \in U$ such that $(u \cdot x)'' = (x'' - x' u)$ is not injective, then there exists $w \in k^{\oplus n''}$ such that $(x'' - x' u)(w) = 0$. This is equivalent to the existence $v=uw \in k^{\oplus n'}$ and $w \in k^{\oplus n''}$ such that $x'(v) = x''(w)$, which shows that the images of $x'$ and $x''$ have non-zero intersection, and so $x = (x', x'')$ is not injective.
\end{proof}

\begin{remark}
    The above explicitly constructed quotient coincides with a relative non-reductive quotient associated to the equivariant action \cref{eqn:toy example equivariant action set up} on the affine morphism $f \colon X_n \rightarrow X_{n'}$ in the sense of \cite{HHJ}. Indeed this action is (internally) graded by $\lambda_{n'}$ and the relative quotient is performed with respect to the reductive GIT quotient $q \colon X'_{n'} :=X_{n'}^{G'-ss}(\theta) = X_{n'}^{\mathrm{inj}} \rightarrow X_{n'} \git_\theta G' = \mathrm{Gr}(n,N)$ and the character $\rho \colon P \rightarrow \GG_m$ given by $\rho(g',g'') := \det(g'')^{-1}$. Using the non-reductive Hilbert--Mumford criterion, one can show that 
    \[ X_n^{(s)s}(f;q,\rho) = X_n^{\mathrm{inj}}\]
    admits a Zariski-locally trivial $P$-quotient that is projective over  $X_{n'} \git_\theta G' = \mathrm{Gr}(n,N)$. 
\end{remark}

We now consider a jet version of the above set up and look at the action 
\begin{equation}\label{eq NRGIT set-up for jet Grassmannian toy model}
    \GL_{m,n} \curvearrowright X_{m,n}:=\mathrm{Hom}_{k_m}(k_m^{\oplus n},k_m^{\oplus N}).
\end{equation}
In \cite[$\S$4.6.1]{HJV25}, we use relative non-reductive GIT to construct a Zariski-locally trivial quotient of the open set $X_{m,n}^{\mathrm{inj}}$ which is equal to the jet Grassmannian $\mathrm{Gr}_m(n,N)$ parametrising free rank $n$ submodules of $k_m^{\oplus N}$. In fact, this is the bundle of $m$-jets over $\mathrm{Gr}(n,N)$ and as $\GL_{m,n}$ is special, this quotient is Zariski-locally trivial.

As above, we write $n = n' + n''$ and restrict our attention to the action of the jet parabolic subgroup $P_m:=P_{m;n',n}$. We can write $P_m = U_m \rtimes (G_m' \times G_m'')$, where $G_m' := \GL_{m,n'}$ and $G_m'':=\GL_{m,n''}$ are non-reductive groups, and we view their product as a mock-Levi subgroup of $P_m$, as it is the centraliser subgroup of the one-parameter subgroup $\lambda_{n'} \colon \GG_m \rightarrow \GL_n\hookrightarrow \GL_{m,n}$. 

\begin{lemma}\label{Lem/toyJetParabolicQuotient}
The following statements hold for the action of $P_m$ on $X_{m,n}$.
\begin{enumerate}[label=\emph{\roman*})]
\item The open subset $X_{m,n}^{\mathrm{inj}}$ admits a Zariski-locally trivial $(U_m \rtimes G_m'')$-quotient
\[ X_{m,n}^{\mathrm{inj}} \rightarrow X_{m,n}^{\mathrm{inj}}/ (U_m \rtimes G_m''), \]
which is $G_m'$-equivariant relative to $X_{m,n'}^{\mathrm{inj}}$, with fibres given by jet Grassmannians $\mathrm{Gr}_m(n'',N-n')$.
\item There is a Zariski-locally trivial $P_m$-quotient  $X_{m,n}^{\mathrm{inj}} \rightarrow \mathrm{Fl}_m(n',n,N)$  given by the jet flag variety parametrising flags in $k_m^{\oplus N}$ of free submodules of ranks $n' < n < N$.
\end{enumerate}
\end{lemma}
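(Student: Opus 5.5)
The plan is to mimic the proof of \cref{lemma toy model without mult}, replacing vector spaces by free $k_m$-modules and subspaces by free direct summands. Write $x=(x'\,|\,x'')\in X_{m,n}=X_{m,n'}\times X_{m,n''}$ and let $f\colon X_{m,n}\to X_{m,n'}$ be the first projection. The basic algebraic fact we need is this: a $k_m$-linear map $z\colon k_m^{\oplus n}\to k_m^{\oplus N}$ is injective if and only if its reduction $\bar z$ modulo $\epsilon$ is injective, and in that case $\Ima(z)$ is a free direct summand. This holds because $k_m$ is local, so Nakayama's lemma applies and free submodules with injective reduction split. The same criterion gives
\[
X_{m,n}^{\mathrm{inj}}=\{x : x'\text{ injective and } x'' \bmod \Ima(x')\text{ injective}\},
\]
which is the jet analogue of \eqref{eqn:HM type description for toy ex}.

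For i), cover $X_{m,n'}^{\mathrm{inj}}$ by the open sets $Z_S=\{z : S\oplus\Ima(z)=k_m^{\oplus N}\}$, where $S$ runs over free direct summands of rank $N-n'$; it is enough to take coordinate summands. These sets cover because a complement of $\Ima(\bar z)$ lifts to a complement of $\Ima(z)$ by Nakayama. The group $U_m=\{\left(\begin{smallmatrix}1&u\\0&1\end{smallmatrix}\right)\}$, with $u\in\Hom_{k_m}(k_m^{\oplus n''},k_m^{\oplus n'})$, acts by $x''\mapsto x''-x'u$. Over $Z_S$, the decomposition $k_m^{\oplus N}=\Ima(x')\oplus S$ together with the injectivity of $x'$ (so that $x'$ is an isomorphism onto its image) gives a unique $u$ with $\Ima(x''-x'u)\subset S$. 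This $u$ depends algebraically on $x$: it is obtained by composing the projection to $\Ima(x')$ with a left inverse of $x'$ that depends regularly on $x'\in Z_S$. Consequently $f^{-1}(Z_S)\simeq U_m\times Z_S\times\Hom_{k_m}(k_m^{\oplus n''},S)$, and $Z_S\times\Hom_{k_m}(k_m^{\oplus n''},S)$ is a $U_m$-slice relative to $Z_S$.

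Next, injectivity of $x$ corresponds on the slice to injectivity of $x''\in\Hom(k_m^{\oplus n''},S)$. By the jet Grassmannian quotient of \cite[\S4.6.1]{HJV25}, $\Hom^{\mathrm{inj}}(k_m^{\oplus n''},S)\to\mathrm{Gr}_m(n'',S)\simeq\mathrm{Gr}_m(n'',N-n')$ is a Zariski-locally trivial $G''_m$-quotient. The local quotients glue, because over $Z_S\cap Z_{S'}$ the two slices are identified by the unique $U_m$-element, and a categorical quotient is unique. They are $G'_m$-equivariant, since $G'_m$ commutes with $G''_m$ and normalises $U_m$. This proves i). Globally, the quotient can be identified with pairs $(x',L)$ where $L\subset k_m^{\oplus N}/\Ima(x')$ is a free rank $n''$ summand, which exhibits the relative jet Grassmannian.

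For ii), observe that $G'_m$ acts freely on $X_{m,n'}^{\mathrm{inj}}$ with Zariski-locally trivial quotient $\mathrm{Gr}_m(n',N)$, again by \cite[\S4.6.1]{HJV25}. The quotient from i) is a $G'_m$-equivariant bundle over this base, so it descends to a bundle over $\mathrm{Gr}_m(n',N)$ with fibre $\mathrm{Gr}_m(n'',N-n')$. That descended bundle is $\mathrm{Fl}_m(n',n,N)$, the map sending $x$ to the flag $\Ima(x')\subset\Ima(x)$. Alternatively, one can check that $\GL_{m,N}\simeq X_{m,n}^{\mathrm{inj}}\times^{P_m}P_{m;n',n,N}$, by completing an injective $x$ to a basis. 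Then $X_{m,n}^{\mathrm{inj}}/P_m\simeq\GL_{m,N}/P_{m;n',n,N}\simeq\mathrm{Fl}_m(n',n,N)$. Zariski-local triviality of the composite follows because $P_m$ is special: $P_m=U_m\rtimes(\GL_{m,n'}\times\GL_{m,n''})$, and the groups $\GL_{m,r}$ are extensions of $\GL_r$ by unipotent vector groups.

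I expect the main obstacle to be the careful verification that the local slices and trivialisations are genuinely algebraic and glue over $k_m$. The key points are that $Z_S$ is open, that the unique $u$ is a regular function, and that injectivity over $k_m$ can be checked modulo $\epsilon$. I would handle all three by reducing modulo $\epsilon$ and applying Nakayama's lemma.
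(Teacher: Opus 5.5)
Your proposal is correct and follows essentially the same route as the paper. You cover $X_{m,n'}^{\mathrm{inj}}$ by the opens $Z_S$ and use the $U_m$-slices $Z_S\times\Hom_{k_m}(k_m^{\oplus n''},S)$. You then quotient by $G''_m$ via the jet Grassmannian of \cite[\S4.6.1]{HJV25}, and finally by $G'_m$, identifying the result with the jet flag variety as in the toy model. Your Nakayama-based injectivity criterion spells out the jet analogue of \eqref{eqn:HM type description for toy ex}, which the paper leaves implicit.
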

\begin{proof}
As in \cref{lemma toy model without mult} above, we consider the projection $f \colon X:=X_{m,n} \rightarrow Z:=X_{m,n'}$. The open set $X_{m,n'}^{\mathrm{inj}}$ is covered by $Z_S := \{ z \in Z \colon S \oplus \mathrm{Im}(z) = k_m^{\oplus N}\}$, 
where $S$ is a free submodule of rank $N-n'$. The same argument shows that there is a $U_m$-slice on $X_S=f^{-1}(Z_S)$ given by $Y_S := Z_S \times F_S$, where $F_S:= \Hom_{k_m}(k_m^{\oplus n''},S)$. Then $Y_S^{\mathrm{inj}} = Z_S \times F_S^{\mathrm{inj}}$ admits a Zariski-locally trivial $G''_m$-quotient, which is relative and $G'_m$-equivariant over $Z_S$ with fibres $\mathrm{Gr}_m(n'',S)$. The first claim then follows by taking the preimage of the loci $Y_S^{\mathrm{inj}}$ under the $U$-quotient, and the second claim follows by additionally quotienting by $G'_m$ exactly as in \cref{lemma toy model without mult}.
\end{proof}

\subsection{A recursive quotienting procedure}\label{Sect/RecursiveQuotient}

From now on, we fix a quiver with multiplicities $(Q_\bff,\widehat{\bfm})$, where $\bff\in\bbZ_{\geq0}^{Q_0}$ is a framing vector. We will construct a quotient of the stable locus in $\Lambda_{\mathrm{filt}}$ by using the decomposition in \cref{eqn stratification of Lambda filt} by the type $F$, which encodes how the $E$-filtered representation is constructed as a sequence of Hecke modifications (see \cref{def/filtrationType}). Our strategy is to prove this by induction on the length of the type $F$. The main result is as follows.

\begin{theorem}\label{Thm/quotientConstruction}
For a type $F=(c_1\bfalpha_{i_1},\ldots,c_s\bfalpha_{i_s})$ with rank vector $\bfr:=\sum_{t=1}^sc_t\bfalpha_{i_t}$, the $\GL_{\bfm,\bfr}$-action on $\Lambda_F^\st$ admits a Zariski-locally trivial quotient such that the quotient map is a Zariski-locally trivial principal $\GL_{\bfm,\bfr}$-bundle. As a consequence, the same holds for the action $\GL_{\bfm,\bfr}\curvearrowright Z_{\mathrm{filt}}^\st$, for any crystal component $Z\subset\Lambda_{\widehat{\bfr}}$.
\end{theorem}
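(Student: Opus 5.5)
We argue by induction on the length $s$ of the type $F$, working with framed representations of $(\overline{Q_\bff},\widehat{\bfm})$. Throughout, the framing vertex $\infty$ may occur in the type. By \cref{Lem/framingVanishing}, a stable E-filtered representation contains $E_\infty$ as a subrepresentation, so the last step of the filtration is $c_s\bfalpha_{i_s}=\bfalpha_\infty$.

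\textbf{Base case.} Take $\bfr=0$ with the framing vertex of rank one. Then $\Lambda_F^\st$ is the single point where all maps vanish, and $\GL_{\bfm,0}$ is trivial, so this case is immediate; this is the base case we call \cref{prop/quotientBaseStep}.

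\textbf{Inductive step.} Write $F=(c\bfalpha_i,F')$ with $i=i_1$, $c=c_1$, and let $\bfr'=\bfr-c\bfalpha_i$. We use the Hecke correspondence of \cref{Rem/restrictionHeckeCorrespondence} with $l=c$, restricted to $\Lambda_F$:
\[
\Lambda_{F'}\cap\Lambda_{\bfr',0\bfalpha_i}\overset{q}{\longleftarrow}p^{-1}(\Lambda_F)\overset{p}{\longrightarrow}\Lambda_F .
\]
Two facts make this correspondence well behaved on stable loci.
\begin{itemize}
\item Since $\fac_i(M)\simeq k_{m_i}^{\oplus c}$ is free, the submodule $\varphi$ in a point of $p^{-1}(\Lambda_F)$ is forced to be $\Ima(M_{i,\mathrm{in}})$. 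Hence $p$ restricts to an isomorphism onto $\Lambda_F$, up to the choice of frame, that is, up to a $\GL_{m_i,r_i-c}$-torsor.
\item By \cref{Lem/stabilityHeckeModification}, $M$ is stable if and only if $M'=K_i(M)$ is stable and $M_{i,\mathrm{out}}$ is injective.
\end{itemize}
It is cleaner to parametrise $\Lambda_F^\st$ directly. A point of $\Lambda_F^\st$ is determined by three pieces of data:
\begin{itemize}
\item the restricted representation $M'$, a point of $\Lambda_{F'}^\st$ with $\fac_i(M')=0$;
\item the inclusion $\varphi$ of $k_{m_i}^{\oplus r'_i}$ into $k_{m_i}^{\oplus r_i}$, taken as the first $r'_i$ coordinates;
\item the extension data: the values of the outgoing maps on the complementary $c$ free generators, together with the compatible incoming data.
\end{itemize}
The incoming arrows to $i$ land in $\Ima\varphi$ and are determined by $M'$. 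The new data are the outgoing maps $M_{i,\mathrm{out}}$ on the new generators. These must satisfy the linear moment map condition $\widetilde{M}_{i,\mathrm{in}}\circ M_{i,\mathrm{out}}=0$ of \cref{Lem/momentMapEqn}, and, by \cref{Lem/stabilityHeckeModification}, an injectivity condition.

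After fixing the frame, the stabiliser of $\Ima\varphi$ in $\GL_{\bfm,\bfr}$ is the product of $\GL_{\bfm,\bfr'}$ over the vertices $j\neq i$ with the jet parabolic $P_{m_i;r_i',r_i}$ at vertex $i$. We therefore place ourselves in the toy model of \cref{Lem/toyJetParabolicQuotient}, relative to the base $\Lambda_{F'}^\st$. Its quotient exists by induction and is a Zariski-locally trivial principal $\GL_{\bfm,\bfr'}$-bundle, so we may work Zariski-locally on the base where this bundle is trivial.

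The outgoing map $M'_{i,\mathrm{out}}$ is injective with image a free submodule of the free module $V:=\ker\widetilde{M}_{i,\mathrm{in}}$, which lives in $k_{m_i}^{\oplus f_i}\oplus\bigoplus k_a\otimes M_j$. We need this kernel to be free over $k_{m_i}$, of locally constant rank, locally on the base; one would like to deduce this from genericity of the type. Granting it, we choose local free complements $S$ to $\Ima M'_{i,\mathrm{out}}$ inside $V$. As in the proof of \cref{Lem/toyJetParabolicQuotient}, the unipotent radical $U_{m_i}$ then admits a slice: the new outgoing maps are those with image in $S$. Injectivity of $M_{i,\mathrm{out}}$ corresponds to injectivity of the new block, by the argument at the end of \cref{lemma toy model without mult}. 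Quotienting by $\GL_{m_i,c}$ gives a jet Grassmannian bundle $\mathrm{Gr}_{m_i}(c,S)$ over the base. Finally we quotient by $\GL_{\bfm,\bfr'}$ using the inductive quotient; here triviality of stabilisers (\cref{Prop/trivialStabilisers}) and specialness of $\GL_{m,n}$ ensure that the result is again Zariski-locally trivial. Gluing over the choices of $S$ and of local trivialisations of the base gives the quotient of $\Lambda_F^\st$.

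\textbf{Consequence for $Z_{\mathrm{filt}}^\st$.} Since $Z_{\mathrm{filt}}=\bigcup_{F}Z_F$, each $Z_F$ is open in $Z$ (\cref{Lem/typeCrystalComponent}) and $Z_F=\Lambda_F$, the quotients over the pieces $Z_F^\st$ agree on overlaps. Indeed, each is a categorical quotient for a free action and a principal bundle, so they glue to a quotient of $Z_{\mathrm{filt}}^\st$.

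\textbf{Main obstacle.} The hard step is showing that $\ker\widetilde{M}_{i,\mathrm{in}}$ is free, of locally constant rank, Zariski-locally over $\Lambda_{F'}^\st$, and that the locus defined by injectivity of the new block is exactly $\Lambda_F^\st$, not a larger set. For this I would try to combine the dimension count of \cref{Rem/restrictionHeckeCorrespondence} with the freeness of $\fac_i$ on the generic type locus.
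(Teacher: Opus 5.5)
Your architecture is the paper's: induction on the length of $F$, and reduction via the Hecke correspondence to the slice $Y_F$, where $\Ima(M_{i,\mathrm{in}})$ is a fixed free submodule $N_i$ and the group is $P_{\bfm;\bfr,c\bfalpha_i}$. It also uses stability via \cref{Lem/stabilityHeckeModification}, the jet-parabolic toy model for $U_{\bfm;\bfr,c\bfalpha_i}\rtimes\GL_{m_i,c}$, and slices pulled back from the inductive quotient of $\Lambda^\st_{F',0\bfalpha_i}$. Starting at $F=(\bfalpha_\infty)$ is fine: the paper's base case $(c\bfalpha_i,\bfalpha_\infty)$ is then your first inductive step, with $r'_i=0$. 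However, the step you grant is the substantive part of the inductive step (the paper's \cref{Lem/extensionMap}), so the proof is incomplete. Your suggested route would not close it. Genericity of the type or a dimension count only controls a dense open subset of the base. A quotient of all of $\Lambda_F^\st$ needs $\Ker\widetilde{M}'_{i,\mathrm{in}}$ to be free of constant rank at \emph{every} point of $\Lambda^\st_{F',0\bfalpha_i}$, and needs every admissible extension to lie in $\Lambda_F$.

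Both facts follow from the condition $\fac_i(M')=0$ that you already imposed, together with the Hecke correspondence. That condition says $\widetilde{M}'_{i,\mathrm{in}}$ (same image as $M'_{i,\mathrm{in}}$) surjects onto the free module $N_i\simeq k_{m_i}^{\oplus r'_i}$. A surjection onto a free module splits, so the kernel is projective, hence free over the local ring $k_{m_i}$. It is also Zariski-locally free in families, as $\mathcal{O}_{X,x}\otimes_k k_{m_i}$ is local, and its rank is that of the source minus $r'_i$. As $Q$ has no loops, $\widetilde{M}_{i,\mathrm{in}}$ is $\widetilde{M}'_{i,\mathrm{in}}$ followed by $N_i\subset M_i$, and the new block of $M_{i,\mathrm{out}}$ has the same target as $M'_{i,\mathrm{out}}$. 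So the moment map equation at $i$ cuts out the vector bundle $\mathcal{H}om_{k_{m_i}}(k_{m_i}^{\oplus c},\Ker\widetilde{M}'_{i,\mathrm{in}})$. The equations at other vertices are automatic, since all arrows into $i$ land in $N_i$. Every point $M$ of this bundle has type $F$: $\Ima(M_{i,\mathrm{in}})=N_i$ gives $\fac_i(M)\simeq k_{m_i}^{\oplus c}$ and $K_i(M)=M'$. It also lies in $p(q^{-1}(\Lambda_{F',0\bfalpha_i}))$, which is irreducible and top-dimensional by \cref{Rem/restrictionHeckeCorrespondence}, hence contained in $\Lambda_F$. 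With \cref{Lem/stabilityHeckeModification} this identifies $Y_F^\st$ exactly, with no dimension count needed.

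Two smaller points. First, state explicitly that $\Lambda_F\simeq\GL_{\bfm,\bfr}\times^{P_{\bfm;\bfr,c\bfalpha_i}}Y_F$, via $M\mapsto\Ima(M_{i,\mathrm{in}})$ and local sections of $\GL_{\bfm,\bfr}\rightarrow\GL_{\bfm,\bfr}/P_{\bfm;\bfr,c\bfalpha_i}$ (\cref{Lem/parabolicReduction}); this is what turns a quotient of the slice into one of $\Lambda_F^\st$. Second, trivial stabilisers do not produce the quotient or its Zariski-local triviality. Pulling back slices from the base does (\cref{Lem/slicesPullback}), which is what your ``work locally where the inductive bundle is trivial'' amounts to.
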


\begin{proof}
This result is proved by induction on the length $s$ of $F$, where the base case is \cref{prop/quotientBaseStep} and the inductive step is \cref{prop/quotientInductiveStep} below.
\end{proof}

Both the base case and the inductive step use the above toy model. Note that, by \cref{Lem/framingVanishing}, $\Lambda_F^\st\ne\emptyset$ if and only if the last entry of $F$ is $\bfalpha_{\infty}$. Therefore, we take as our base case $F=(c\bfalpha_i,\bfalpha_\infty)$ for some $i\in Q_0$ and $c\in\bbZ_{\geq1}$.

\begin{proposition}[Base case]\label{prop/quotientBaseStep}
Let $c\in\bbZ_{\geq1}$, $i\in Q_0$ and $\bfr:=c\bfalpha_i$. Then the action $\GL_{m_i,c}\curvearrowright\Lambda_{\widehat{\bfr}}^\st$ admits a Zariski-locally trivial quotient.
\end{proposition}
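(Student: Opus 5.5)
We first identify $\Lambda_{\widehat{\bfr}}^\st$ explicitly for $\bfr=c\bfalpha_i$, and then reduce to the jet Grassmannian quotient of \cref{Sect/toyModel}.

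For the rank vector $\widehat{\bfr}=c\bfalpha_i+\bfalpha_\infty$, only vertices $i$ and $\infty$ carry non-zero modules. Since $Q$ has no loops, a point of $R(\overline{Q_\bff},\widehat{\bfm};\widehat{\bfr})$ is just a pair $(s_i,t_i)$ with
\[
s_i\in\Hom_{k_{m_i}}(k_{m_i}^{\oplus f_i},k_{m_i}^{\oplus c})
\quad\text{and}\quad
t_i\in\Hom_{k_{m_i}}(k_{m_i}^{\oplus c},k_{m_i}^{\oplus f_i}).
\]
Here $k_{m_i}^{\oplus f_i}$ is identified, via \cref{Lem/balancingBimodules}, with $k_b\otimes M_\infty$ over the $f_i$ framing arrows.

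By \cref{Lem/framingVanishing}, every stable E-filtered point has $s_i=0$. For such a representation, $M_{i,\mathrm{out}}$ is exactly $t_i$, because there are no other arrows out of $i$. So \cref{Lem/stabilityCriterion} and \cref{Lem/nilpotentStabilityCondition} together show that stability is equivalent to injectivity of $t_i$.

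It remains to check that every point $(0,t_i)$ with $t_i$ injective actually lies in $\Lambda_{\widehat{\bfr}}$. Such a representation is E-filtered: $E_\infty$ is a subrepresentation, since $s_i=0$, and the quotient is $E_i^{\oplus c}$. The moment map equations are trivially satisfied, because $s_it_i$ and $t_is_i$ both vanish. To see that it is crystal, note that $\fac_i(M)=M_i$ is free and $\sub_i(M)=\Ker t_i=0$, and similarly at $\infty$. Moreover, $K_i(M)=E_\infty$ and $C_\infty(M)=E_i^{\oplus c}$ are crystal. Then \cref{Rmk/crystalLocusVScrystalComponents} places this point in $\Lambda_{\widehat{\bfr}}$.

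Putting these together, and being slightly careful that no other crystal component contributes stable points (any stable point has $s_i=0$ by the argument above),
\[
\Lambda_{\widehat{\bfr}}^\st=\{0\}\times\Hom^{\mathrm{inj}}_{k_{m_i}}(k_{m_i}^{\oplus c},k_{m_i}^{\oplus f_i}),
\]
with $\GL_{m_i,c}$ acting by $t\mapsto tg^{-1}$. This locus is empty if $c>f_i$.

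We then apply the jet Grassmannian construction recalled before \cref{Lem/toyJetParabolicQuotient} (from \cite[\S 4.6.1]{HJV25}), with $N=f_i$, $n=c$, $m=m_i$. Equivalently, this is the case $n''=0$ of \cref{Lem/toyJetParabolicQuotient}, where the slices are the $Z_S$ constructed there. It gives a Zariski-locally trivial principal $\GL_{m_i,c}$-bundle
\[
X^{\mathrm{inj}}_{m_i,c}\to\mathrm{Gr}_{m_i}(c,f_i).
\]
Concretely, over the open chart where the image of $t$ is complementary to a fixed free submodule $S$, the slice consists of maps of the form $t=\left(\begin{smallmatrix}I\\ *\end{smallmatrix}\right)$ in adapted coordinates. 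This gives local trivialisations over open charts, and these glue.

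The main point needing care is the identification of $\Lambda_{\widehat{\bfr}}^\st$. In particular, one must verify that $\Lambda_{\widehat{\bfr}}$ is the union of only the two crystal components $\{s=0\}$ and $\{t=0\}$, and that the second has empty stable locus. The quotient step itself is formal.
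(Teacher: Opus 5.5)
Your proposal is correct and follows the paper's proof: \cref{Lem/framingVanishing} forces $s_i=0$, stability reduces to injectivity of $t_i$, and the jet Grassmannian quotient of \cite[\S 4.6.1]{HJV25} finishes the argument. Your check that every $(0,t_i)$ with $t_i$ injective is crystal, and hence lies in $\Lambda_{\widehat{\bfr}}$, usefully fills in a step the paper leaves implicit. One correction: your closing claim that $\Lambda_{\widehat{\bfr}}$ has only the two components $\{s=0\}$ and $\{t=0\}$ is false in general. For $m_i=1$, $c=f_i=2$, the closure of the locus where $s_i,t_i$ both have rank one and $s_it_i=t_is_i=0$ is a third top-dimensional, hence crystal, component. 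The claim is not needed, though, since your framing-vanishing argument already applies to every stable point of $\Lambda_{\widehat{\bfr}}$.
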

\begin{proof}
A stable E-filtered rank $\widehat{\bfr}$ representation of $\overline{Q_{\mathbf{f}}}$ satisfying the moment map equations is given by an injective linear map $k_{m_i}^{\oplus c} \rightarrow k_{m_\infty}^{\oplus f_i}$, as by \cref{Lem/framingVanishing} the maps from the framing vertex $\infty$ to $i$ are all zero. Hence we are considering the action of $\GL_{m_i,c}$ on $\Lambda_{\widehat{\bfr}} = \Hom_{k_{m_i}}(k_{m_i}^{\oplus c} , k_{m_i}^{\oplus f_i})$ as in \cref{eq NRGIT set-up for jet Grassmannian toy model}. By \cite[$\S$4.6.1]{HJV25}, 
there is a Zariski-locally trivial $\GL_{m_i,c}$-quotient of $\Lambda^{\st}_{\widehat{\bfr}} = \Hom_{k_{m_i}}^{\mathrm{inj}}(k_{m_i}^{\oplus c} , k_{m_i}^{\oplus f_i})$ given by the jet Grassmannian $\mathrm{Gr}_m(c,f_i)$ parametrising free rank $c$ submodules of $k_m^{\oplus f_i}$. 
\end{proof}

\begin{notation}\label{not/type and subtype}
For the rest of this section, we fix a type $F$ of length $s$ and write $F = (c\bfalpha_i,F')$ where $F'$ is a type of length $s-1$. We let $\bfr$ and $\bfr'$ denote the rank vectors associated to the types $F$ and $F'$, so that $\bfr = \bfr' + c\bfalpha_i$.
\end{notation}

\begin{definition}\label{def/jetParabolicAction}
For a free $k_{m_i}$-submodule $N_i\subset k_{m_i}^{\oplus r_i}$ of rank $r_i-c$ and an isomorphism $N_i\simeq k_{m_i}^{\oplus (r_i-c)}$, we consider the affine subspace:
\[ R(\overline{Q}_\bff,\widehat{\bfm};\widehat{\bfr},N_i) 
:= \left\{
(x,y,s,t)\in R(\overline{Q}_\bff,\widehat{\bfm};\widehat{\bfr})
\ \vert\ 
\Ima(M(x,y,s,t)_{i,\mathrm{in}})\subset N_i
\right\}  \subset  R(\overline{Q}_\bff,\widehat{\bfm};\widehat{\bfr}).\]
Then we define $Y_F=Y_F(N_i):=\Lambda_F\cap R(\overline{Q}_\bff,\widehat{\bfm};\widehat{\bfr},N_i)$ and
let $P_{\bfm;\bfr,c\bfalpha_i}=P_{\bfm,\bfr}(N_i)\subset\GL_{\bfm,\bfr}$ denote the subgroup of $\GL_{\bfm,\bfr}$ stabilising the flag $N_i\subset k_{m_i}^{\oplus r_i}$. 
\end{definition}

Note that both $Y_F$ and $P_{\bfm;\bfr,c\bfalpha_i}$ depend on $N_i$, but as we will fix $N_i$ below, we suppress this in the notation. The action $\GL_{\bfm,\bfr}\curvearrowright\Lambda_F$ induces an action $P_{\bfm;\bfr,c\bfalpha_i}\curvearrowright Y_F$. The first step in the inductive procedure replaces the former action with the latter, by relating their quotient stacks. First, we verify that the group $P_{\bfm;\bfr,c\bfalpha_i}$ is special.

\begin{lemma}\label{Lem/specialGroups}
Let $\bfr\in\bbZ_{\geq0}^{Q_0}$ and $(c,i)\in\bbZ_{\geq0}\times Q_0$. The group $P_{\bfm;\bfr,c\bfalpha_i}$ is special.
\end{lemma}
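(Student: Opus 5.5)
We will show that $P:=P_{\bfm;\bfr,c\bfalpha_i}$ is special, i.e.\ that every étale-locally trivial principal $P$-bundle is Zariski-locally trivial. The plan is to write $P$ as an iterated extension of groups already known to be special and to use that the class of special groups is closed under extensions.

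First we describe $P$ concretely. Since $P$ is the stabiliser in $\GL_{\bfm,\bfr}=\prod_{j\in Q_0}\GL_{m_j,r_j}$ of the flag $N_i\subset k_{m_i}^{\oplus r_i}$, it is a product
\[
P=P_{m_i;\,r_i-c,r_i}\times\prod_{j\ne i}\GL_{m_j,r_j},
\]
where $P_{m_i;r_i-c,r_i}$ is the jet parabolic of $\GL_{m_i,r_i}$ of \cref{Sect/toyModel}. A finite product of special groups is special, so it suffices to show two things: that each $\GL_{m,r}$ is special, and that each jet parabolic $P_m=U_m\rtimes(G'_m\times G''_m)$ is special.

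For $\GL_{m,r}=\GL_r(k_m)$, reduction modulo $\epsilon$ gives a split surjection $\GL_r(k_m)\to\GL_r$. Its kernel $\{1+\epsilon A\}$ is a unipotent group with a filtration by the subgroups $1+\epsilon^lA$, whose successive quotients are the vector groups $\mathfrak{gl}_r$, i.e.\ products of $\mathbb{G}_a$. Since $\GL_r$ and $\mathbb{G}_a$ are special (Serre), and a connected split unipotent group is special (as $H^1(X,\mathbb{G}_a)=0$ on affines, together with the long exact sequence for the filtration), we conclude that $\GL_{m,r}$ is special. This uses the standard fact that if $1\to H\to G\to K\to 1$ is exact with $H$ and $K$ special, then $G$ is special: a $G$-torsor over a local ring first reduces to an $H$-torsor because $K$-torsors are trivial locally, and that $H$-torsor is in turn trivial locally.

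For the jet parabolic, we use the exact sequence $1\to U_m\to P_m\to G'_m\times G''_m\to1$. Here $U_m\simeq\Hom_{k_m}(k_m^{\oplus n''},k_m^{\oplus n'})$ is a vector group, hence special, and the Levi quotient $G'_m\times G''_m$ is special by the previous paragraph. Hence $P_m$ is special.

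The main obstacle is making the extension argument rigorous in arbitrary characteristic for the non-reductive unipotent part. The point to check is that the filtration of the kernel has vector-group subquotients and is normal at each stage, so that the torsor-reduction argument applies. Alternatively, one can argue directly: $P$ is connected and solvable-by-$\GL$ with split unipotent radical, and its Levi quotient is a product of $\GL_n$'s, which are special. Then we would cite the known result that a connected linear group is special if its reductive quotient is special and its unipotent radical is split.
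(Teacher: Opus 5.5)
Your proof is correct and follows essentially the same route as the paper. Both arguments write $P_{\bfm;\bfr,c\bfalpha_i}$ as an extension of $\GL_{\bfm,\bfr'}\times\GL_{m_i,c}$ by a unipotent group, view each $\GL_{m,n}$ as an extension of $\GL_n$ by a unipotent group, and use that extensions of special groups are special, with your explicit description of the unipotent parts (the vector group $U_m$ and the congruence filtration $1+\epsilon^l A$ with $\mathfrak{gl}_r$ subquotients) simply spelling out what the paper obtains by citing Serre.
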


\begin{proof}
The group $P_{\bfm;\bfr,c\bfalpha_i}$ is an extension of $\GL_{\bfm,\bfr'}\times\GL_{m_i,c}$ by a unipotent algebraic group. Thus by \cite[Lem.\ 6, Prop.\ 14]{Ser58}, it is enough to show that $\GL_{\bfm,\bfr'}\times\GL_{m_i,c}$ is special. This in turn follows from \cite[Lem.\ 6, Prop.\ 14]{Ser58} and the fact that the general linear group is special \cite[Lem.\ III.4.10]{Mil80}.
\end{proof}

We can relate the two actions by showing the quotient stacks are isomorphic.

\begin{lemma}\label{Lem/parabolicReduction}
There is a $\GL_{\bfm,\bfr}$-equivariant isomorphism of schemes:
\[
\Lambda_F\simeq \GL_{\bfm,\bfr}\times^{P_{\bfm;\bfr,c\bfalpha}}Y_F.
\]
In particular, there is an isomorphism of stacks $[\Lambda_F/\GL_{\bfm,\bfr}] \cong [ Y_F /P_{\bfm;\bfr,c\bfalpha}]$.
\end{lemma}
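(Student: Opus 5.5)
The plan is the standard one for associated bundles. Consider the action map
\[
a\colon \GL_{\bfm,\bfr}\times Y_F\longrightarrow\Lambda_F,\qquad (g,z)\mapsto g\cdot z,
\]
which is invariant for the free action of $P_{\bfm;\bfr,c\bfalpha_i}$ given by $p\cdot(g,z)=(gp^{-1},p\cdot z)$. The quotient $\GL_{\bfm,\bfr}\times^{P_{\bfm;\bfr,c\bfalpha_i}}Y_F$ exists as a scheme. Indeed, it is the fibre bundle over $\GL_{\bfm,\bfr}/P_{\bfm;\bfr,c\bfalpha_i}$ with fibre $Y_F$, and this base is the jet Grassmannian $\mathrm{Gr}_{m_i}(r_i-c,r_i)$ times a point. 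Since $P_{\bfm;\bfr,c\bfalpha_i}$ is special by \cref{Lem/specialGroups}, the bundle $\GL_{\bfm,\bfr}\to\GL_{\bfm,\bfr}/P_{\bfm;\bfr,c\bfalpha_i}$ is Zariski-locally trivial, so the quotient is obtained by gluing products. The map $a$ therefore descends to a $\GL_{\bfm,\bfr}$-equivariant morphism $\bar a$, and the task is to show that $\bar a$ is an isomorphism.

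The key geometric input is that, on $\Lambda_F$, one has $\fac_i(M)\simeq k_{m_i}^{\oplus c}$. Hence $\Ima(M_{i,\mathrm{in}})$ is a free $k_{m_i}$-submodule of rank $r_i-c$, and it is in fact a direct summand: its cokernel is free, so the sequence splits. This gives a morphism
\[
\pi\colon\Lambda_F\to\mathrm{Gr}_{m_i}(r_i-c,r_i),\qquad (x,y,s,t)\mapsto\Ima(M(x,y,s,t)_{i,\mathrm{in}}),
\]
which is $\GL_{\bfm,\bfr}$-equivariant, and whose scheme-theoretic fibre over the point $N_i$ is exactly $Y_F$. The inclusion $\Ima\subset N_i$ with equal ranks of free summands forces equality. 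To check that $\pi$ is a morphism, write the image locally as the column space of a matrix $M_{i,\mathrm{in}}$ of constant $k_{m_i}$-rank type. On the locus where a fixed set of $r_i-c$ columns generates a free summand, Plücker-type coordinates on the jet Grassmannian, as in \cite[\S 4.6.1]{HJV25}, are regular functions. Because $\Lambda_F$ is reduced, it suffices to produce $\pi$ on these open pieces and glue.

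Given $\pi$, the general principle is that a $G$-equivariant morphism $X\to G/P$ identifies $X\simeq G\times^P X_{eP}$. I will write down the inverse of $\bar a$ explicitly on the open sets $U_S=\{N : N\oplus S=k_{m_i}^{\oplus r_i}\}$ of the jet Grassmannian. Over each $U_S$ there is a section $\sigma_S\colon U_S\to\GL_{\bfm,\bfr}$ with $\sigma_S(N)\cdot N_i=N$; for instance, unipotent matrices in the fixed complement coordinates do this, exactly as in the slices of \cref{Lem/toyJetParabolicQuotient}. On $\pi^{-1}(U_S)$, the inverse is
\[
z\mapsto\bigl[\sigma_S(\pi(z)),\,\sigma_S(\pi(z))^{-1}z\bigr].
\]
Independence of $S$ holds because the two sections differ by an element of $P_{\bfm;\bfr,c\bfalpha_i}$. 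Verifying that this map and $\bar a$ are mutually inverse is routine. The isomorphism of stacks then follows formally, since
\[
[\GL_{\bfm,\bfr}\times^{P}Y_F/\GL_{\bfm,\bfr}]\simeq[Y_F/P].
\]

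I expect the main obstacle to be showing that $\pi$ is an honest morphism of schemes, and not merely a map on points. This matters because $\Lambda_F$ is only constructible and reduced, and the target is a jet Grassmannian parametrising free submodules. I would handle this by using reducedness together with the openness and density from \cref{Lem/typeCrystalComponent}, and by working with $\Lambda_F$ with its reduced locally closed structure. On that structure the $k$-dimension of $\Ima(M_{i,\mathrm{in}})$ is constant, so the image defines a flat family of submodules, and the functor of points of the jet Grassmannian applies.
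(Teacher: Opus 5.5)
Your proposal is correct and follows essentially the same route as the paper. You descend the action map to $\GL_{\bfm,\bfr}\times^{P_{\bfm;\bfr,c\bfalpha_i}}Y_F$, which is a scheme because $P_{\bfm;\bfr,c\bfalpha_i}$ is special, and you build the inverse by gluing the maps $z\mapsto[\sigma(h(z)),\sigma(h(z))^{-1}z]$ over trivialising opens of $\GL_{\bfm,\bfr}/P_{\bfm;\bfr,c\bfalpha_i}$, where $h(z)=\Ima(M_{i,\mathrm{in}})$. Your extra checks fill in points the paper leaves implicit: that $h$ is a genuine morphism (by constant rank on the reduced locally closed $\Lambda_F$), and that its fibre over $N_i$ is $Y_F$ (a free rank-$(r_i-c)$ submodule contained in $N_i$ must equal it).
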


\begin{proof}
Let us write $Y:=Y_F$, $X:=\Lambda_F$, $P:=P_{\bfm;\bfr,c\bfalpha_i}$ and $G:=\GL_{\bfm,\bfr}$ for short. The action $G\curvearrowright X$ induces a $P$-invariant map $G\times Y\rightarrow X$ (for the diagonal action $p\cdot (g,y)=(gp^{-1},p\cdot y)$), hence a map $G\times^PY\rightarrow X$. Note that the quotient $G\times^PY$ is a scheme, as $P$ is special by \cref{Lem/specialGroups}. 

To construct an inverse morphism, let $V\subset G/P$ be a trivialising open for the principal $P$-bundle $G\rightarrow G/P$, let $s\colon V\hookrightarrow G$ be the associated section and $U\subset X$ be the inverse image of $V$ under the morphism
\[
\begin{array}{rcl}
 X & \stackrel{h}{\longrightarrow} & G/P \\
(x,y,s,t) & \mapsto & \Ima(M(x,y,s,t)_{i,\mathrm{in}})
.
\end{array}
\]
Then we obtain a morphism:
\[
\begin{array}{rcl}
U & \rightarrow & G\times Y \\
p & \mapsto & \left((s\circ h)(p),(s\circ h)^{-1}(p)\cdot p\right)
.
\end{array}
\]
After composing with the quotient map $G\times Y\rightarrow G\times^PY$, the above maps (for covering trivialising opens $V\subset G/P$) glue to a morphism $X\rightarrow G\times^PY$, which is the required inverse.
\end{proof}

\begin{remark}\label{Rmk/HeckeCorrespondences}
As we hinted in the Introduction, the map $p\colon \Lambda_{\bfr',\bfr;c\bfalpha_i}\rightarrow\Lambda_{\bfr;c\bfalpha_i}$ restricts to a principal $\GL_{m_i,r'_i}$-bundle above $\Lambda_F$, whose fibres parametrise isomorphisms:
\[
\varphi \colon k_{m_i}^{\oplus r'_i}\simeq \Ima(M_{i,\mathrm{in}})
.
\]
Therefore, we obtain a Hecke correspondence:
\[
[\Lambda_{F'}/\GL_{\bfm,\bfr'}]\overset{q}{\longleftarrow} \Lambda_{F',F}:=p^{-1}(\Lambda_F)/\GL_{m_i,r'_i}\overset{\sim}{\longrightarrow}\Lambda_F
,
\]
where $q$ is $\GL_{\bfm,\bfr}$-invariant. The projection $\Lambda_{F',F}\rightarrow\GL_{\bfm,\bfr}/P_{\bfm;\bfr,c\bfalpha_i}, M\mapsto \Ima(\varphi)$ is a fibre bundle with fibre isomorphic to $Y_F$. Reducing to the quotient stack $[Y_F/P_{\bfm;\bfr,c\bfalpha_i}]$ amounts to fixing $\Ima(M_{i,\mathrm{in}})=N_i$, as shown in the above lemma. This is summarised in the following diagram, where we quotient by $\GL_{\bfm,\bfr}$:
\[
\begin{tikzcd}
\left[Y_F/P_{\bfm;\bfr,c\bfalpha_i}\right] \ar[d]\ar[r,"\sim"] & \left[\Lambda_{F',F}/\GL_{\bfm,\bfr}\right] \ar[d] \\
\mathrm{B}P_{\bfm;\bfr,c\bfalpha_i} \ar[r,"\sim"] & \left[\GL_{\bfm,\bfr}\setminus\GL_{\bfm,\bfr}/P_{\bfm;\bfr,c\bfalpha_i}\right]
\end{tikzcd}
\]
This kind of reduction is common in the study of Hecke correspondences and Hall algebras, see for instance \cite{SV13a,Hen24a} or more recently \cite{GGS26}.
\end{remark}

\begin{proposition}[Inductive step]\label{prop/quotientInductiveStep}
Let $F = (c\bfalpha_i,F')$ be a type with rank vector $\bfr$ and $\bfr':=\bfr - c \bfalpha_i$ be the rank vector of the type $F'$.  
Suppose that the action $\GL_{\bfm,\bfr'}\curvearrowright\Lambda_{F'}^\st$ admits a Zariski-locally trivial $\GL_{\bfm,\bfr'}$-quotient. Then the action $P_{\bfm;\bfr,c\bfalpha_i}\curvearrowright Y_F^\st$ admits a Zariski-locally trivial $P_{\bfm;\bfr,c\bfalpha_i}$-quotient. Consequently, there is a Zariski-locally trivial $\GL_{\bfm,\bfr}$-quotient of $\Lambda^\st_F$.
\end{proposition}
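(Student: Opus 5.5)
The plan is to mimic the toy model of \cref{Lem/toyJetParabolicQuotient}, with the base $X_{m,n'}^{\mathrm{inj}}$ replaced by $\Lambda_{F'}^\st$. The final claim then follows from \cref{Lem/parabolicReduction}. Since $P:=P_{\bfm;\bfr,c\bfalpha_i}$ is special (\cref{Lem/specialGroups}), a Zariski-locally trivial $P$-quotient $Y_F^\st\to Y_F^\st/P$ gives a Zariski-locally trivial $\GL_{\bfm,\bfr}$-quotient of $\Lambda_F^\st\simeq\GL_{\bfm,\bfr}\times^PY_F^\st$, with the same base. So everything reduces to the $P$-action on $Y_F^\st$.

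\emph{Step 1: the restriction map.} For a point of $Y_F$, all arrows into $i$ land in $N_i\simeq k_{m_i}^{\oplus r'_i}$, so $M':=K_i(M)$ has underlying modules $N_i$ at $i$ and $M_j$ for $j\neq i$. Restricting $(x,y,s,t)$ to $M'$ gives a map $f\colon Y_F\to\Lambda_{F'}$. This map is equivariant for $P\twoheadrightarrow\GL_{\bfm,\bfr'}\times\GL_{m_i,c}$, with the $\GL_{m_i,c}$ factor acting trivially on the base. By \cref{Lem/stabilityHeckeModification}, $f$ maps $Y_F^\st$ into $\Lambda_{F'}^\st$. Moreover, $Y_F^\st$ is the locus where the image is stable \emph{and} $M_{i,\mathrm{out}}$ is injective.

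\emph{Step 2: the fibres of $f$.} Choose a complement $N_i\oplus k_{m_i}^{\oplus c}$. The data not recorded by $f(M)$ is the restriction of $M_{i,\mathrm{out}}$ to the complement, namely a $k_{m_i}$-linear map
\[
x''\colon k_{m_i}^{\oplus c}\to W_i:=k_{m_i}^{\oplus f_i}\oplus\textstyle\bigoplus_{a\colon i\to j}k_a\otimes M_j .
\]
This map is subject to the moment map equation $\widetilde{M}_{i,\mathrm{in}}\circ M_{i,\mathrm{out}}=0$ (\cref{Lem/momentMapEqn}), i.e.\ $x''$ lands in $\Ker(\widetilde{M'}_{i,\mathrm{in}})$; the equations at other vertices $j$ are linear in $x''$ as well. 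The condition $\fac_i(M)\simeq k_{m_i}^{\oplus c}$ then holds automatically, since the image of $M_{i,\mathrm{in}}$ is $N_i$. Here I would use the Hecke-correspondence description of \cref{Rem/restrictionHeckeCorrespondence} and \cref{Rmk/HeckeCorrespondences}, together with openness of $Z_F$ (\cref{Lem/typeCrystalComponent}), to argue the following. Over $\Lambda_{F'}^\st$, the fibre of $f$ on $Y_F$ is an open subset of $\Hom_{k_{m_i}}(k_{m_i}^{\oplus c},K)$, where $K\subset W_i$ is a locally free summand whose rank is constant on $\Lambda_{F'}$, so $K$ forms a vector bundle $\mathcal{K}$ over $\Lambda_{F'}^\st$. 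Stability then singles out exactly those $x''$ for which $(M'_{i,\mathrm{out}},x'')\colon N_i\oplus k_{m_i}^{\oplus c}\to W_i$ is injective, and $M'_{i,\mathrm{out}}$ is already injective with image in $K$.

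\emph{Step 3: apply the toy model relatively.} Locally on $\Lambda_{F'}^\st$, trivialise $\mathcal K$ and choose a free complement $S$ to the image of $M'_{i,\mathrm{out}}$ inside $\mathcal K$. As in \cref{Lem/toyJetParabolicQuotient}, the unipotent radical $U$ of $P$ acts by $x''\mapsto x''-M'_{i,\mathrm{out}}\circ u$, so it has the slice $\{x''\colon\Ima x''\subset S\}$. Injectivity of the pair becomes injectivity of the slice map. The $\GL_{m_i,c}$-quotient is then a relative jet Grassmannian $\mathrm{Gr}_{m_i}(c,S)$, which is Zariski-locally trivial. Finally, the resulting space is $\GL_{\bfm,\bfr'}$-equivariantly fibred over $\Lambda_{F'}^\st$, and I descend it along the assumed Zariski-locally trivial quotient $\Lambda_{F'}^\st\to\Lambda_{F'}^\st/\GL_{\bfm,\bfr'}$ (an associated bundle construction, using specialness). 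The local constructions glue because the slices for different $S$ give canonically isomorphic quotients.

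\emph{Main obstacle.} The hard part is Step 2: showing that $f|_{Y_F^\st}$ really is a (Zariski-locally) trivial bundle of open subsets of a locally free Hom-bundle with constant rank. This needs the moment map constraint to cut out a locally free module of constant rank along $\Lambda_{F'}$. I would try to deduce this from the smoothness and the fibre-dimension formula for $p,q$ in \cref{Rem/restrictionHeckeCorrespondence}, combined with $\fac_i(M')=0$ on $\Lambda_{F'}$.
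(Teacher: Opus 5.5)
Your overall plan is the paper's. You restrict to $K_i(M)$ to get a bundle over the base (\cref{Lem/extensionMap}). You take the $U_{\bfm;\bfr,c\bfalpha_i}\rtimes\GL_{m_i,c}$-quotient fibrewise using the jet toy model and \cref{Lem/slicesPullback} (\cref{Lem/partialQuotient}). You then descend along the assumed quotient of the base and finish with \cref{Lem/parabolicReduction}. The problem is Step~2, which you leave open and partly misstate.

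First, the base of your restriction map $f$ (the paper's $\Phi$) is not $\Lambda_{F'}^\st$. Since $\Ima(M_{i,\mathrm{in}})\subset N_i$, $\rk N_i=r_i-c$ and $\fac_i(M)\simeq k_{m_i}^{\oplus c}$, one has $\Ima(M_{i,\mathrm{in}})=N_i$. So $f$ lands in the open subset $\Lambda_{F',0\bfalpha_i}$.

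Your claim that $K=\Ker(\widetilde{M'}_{i,\mathrm{in}})$ has constant rank on $\Lambda_{F'}$ is false in general: its $k$-dimension jumps with $\dim_k\fac_i(M')$. Smoothness and fibre-dimension counts for $p,q$ are also not what is needed. On $\Lambda_{F',0\bfalpha_i}$ the statement is pure linear algebra. There $\widetilde{M'}_{i,\mathrm{in}}$ is a surjection of free $k_{m_i}$-modules onto $N_i$, hence split. Its kernel is therefore a free summand of constant rank $p$, and $\mathcal{K}$ is a vector bundle.

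Moreover, the moment map equations at vertices $j\neq i$ only involve arrows out of $i$ restricted to $N_i$, because every arrow into $i$ lands in $N_i$. So they do not involve $x''$ at all; they are not merely ``linear in $x''$''. The only condition on $x''$ is $\widetilde{M'}_{i,\mathrm{in}}\circ x''=0$.

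The Hecke correspondence of \cref{Rem/restrictionHeckeCorrespondence} is needed for a different point. A point of $f^{-1}(\Lambda_{F',0\bfalpha_i})$ where the moment map vanishes is automatically E-filtered of type $F$. What is not automatic is that it lies in $\Lambda_F$, i.e.\ in a crystal component on which $F$ is the generic type. The correspondence shows that it does. Hence $Y_F$ is the full zero locus of the moment map over $\Lambda_{F',0\bfalpha_i}$, and $Y_F\to\Lambda_{F',0\bfalpha_i}$ is the whole bundle $\mathcal{H}om_{k_{m_i}}(k_{m_i}^{\oplus c},\mathcal{K})$, not just an open subset of it.

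With these corrections, the rest of your proposal goes through as you describe. This covers Step~1, the slice argument in Step~3, and the descent via \cref{Lem/slicesPullback} over the open invariant subset $\Lambda^\st_{F',0\bfalpha_i}$.
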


The following subsection is devoted to the proof of this proposition.

\subsection{Proof of the inductive step}

We continue to write the type as $F = (c\bfalpha_i,F')$ as in \cref{not/type and subtype}, and let $\bfr$ and $\bfr'$ denote the rank vectors of $F$ and $F'$. Using the fixed isomorphism $N_i\simeq k_{m_i}^{\oplus (r_i-c)}$ (see \cref{def/jetParabolicAction}), we can define a restriction map:
\[
\begin{array}{rcl}
R(\overline{Q_\bff},\widehat{\bfm};\widehat{\bfr},N_i) &
\overset{\Phi}{\longrightarrow} &
R(\overline{Q}_\bff,\widehat{\bfm};\widehat{\bfr}') \\
(x,y,s,t) & \mapsto & (x\vert_N,y\vert_N,s\vert_N,t\vert_N)
\end{array}
,
\]
where $N$ denotes the collection of submodules $N_j\subset k_{m_j}^{\oplus r_j},\ j\in Q_0$ given by:
\[
N_j:=
\left\{
\begin{array}{ll}
N_i & j=i, \\
M_j & j\ne i.
\end{array}
\right.
\]

As in \cref{Notation/0alpha_i}, we will write $0\bfalpha_i$ to keep track of the vertex $i\in Q_0$ where $\fac_i(M)=0$. In particular, $\Lambda_{F',0\bfalpha_i}$ is the open subscheme of $\Lambda_{F'}$ consisting of representations $M$ such that $M_{i,\mathrm{in}}$ is surjective.

\begin{lemma}\label{Lem/extensionMap}
The restriction map $ \Phi \colon 
R(\overline{Q_\bff},\widehat{\bfm};\widehat{\bfr},N_i)
\longrightarrow
R(\overline{Q}_\bff,\widehat{\bfm};\widehat{\bfr}')$
induces a vector bundle
\[
Y_F\rightarrow\Lambda_{F',0\bfalpha_i},
\]
which is equivariant with respect to the morphism of algebraic groups $P_{\bfm;\bfr,c\bfalpha_i}\rightarrow\GL_{\bfm,\bfr'}$.
\end{lemma}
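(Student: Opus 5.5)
We first describe $\Phi$ and its fibres on the ambient affine spaces. Since $Q$ has no loop arrows, a point $(x,y,s,t)\in R(\overline{Q_\bff},\widehat{\bfm};\widehat{\bfr},N_i)$ is determined by its restriction $\Phi(x,y,s,t)$ together with the components of the arrows leaving vertex $i$ that are not seen by the restriction. Choose a free complement $C$ with $k_{m_i}^{\oplus r_i}=N_i\oplus C$, $C\simeq k_{m_i}^{\oplus c}$. The arrows into $i$ already land in $N_i$ by definition, so they are recorded by $\Phi$. The arrows out of $i$ (including $t_i$) split into their restrictions to $N_i$, which are recorded by $\Phi$, and their restrictions to $C$. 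Thus $R(\overline{Q_\bff},\widehat{\bfm};\widehat{\bfr},N_i)\simeq R(\overline{Q_\bff},\widehat{\bfm};\widehat{\bfr}')\times V$, where $V$ is the space of $k_{m_i}$-linear maps $C\rightarrow\bigoplus_{a:i\rightarrow j}k_a\otimes_{k_{m_j}}M_j$ (by \cref{Lem/balancingBimodules}). The group $P_{\bfm;\bfr,c\bfalpha_i}$ acts by affine-linear transformations in the $V$ direction, because its unipotent radical mixes $C$ into $N_i$. So rather than using the splitting, we describe the fibre intrinsically as $\Hom(k_{m_i}^{\oplus r_i}/N_i,\ \cdot\ )$ twisted by the data on $N_i$. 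This fibre is a torsor under, and after linearising a vector bundle over, the base, and the projection $\Phi$ is equivariant along $P\rightarrow \GL_{\bfm,\bfr'}$, the restriction to the Levi factor.

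Next, we impose the conditions defining $Y_F$. The moment map equations at vertices $j\neq i$ are linear in the $V$-coordinates: only the terms $x_a y_a$ with $a$ incident to $i$ involve $V$. The equation at vertex $i$ reads $\widetilde{M}_{i,\mathrm{in}}\circ M_{i,\mathrm{out}}=0$ by \cref{Lem/momentMapEqn}. On $C$ it is again linear in $V$, and on $N_i$ it is the equation of the restricted representation. The key observation is the following. Given $M'=\Phi(x,y,s,t)$ with $\fac_i(M')=0$, i.e.\ $M'_{i,\mathrm{in}}$ surjective onto $N_i$, the extra data $v\in V$ must satisfy $\widetilde{M'}_{i,\mathrm{in}}\circ v=0$. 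This is the kernel of a map of constant rank, namely the Hom into the kernel of a surjection between free modules, which is again free of constant rank. Hence the solutions form a vector subbundle of rank independent of the base point. The $E$-filtered condition holds automatically, since $M/M'\simeq E_i^{\oplus c}$ and $M'$ is $E$-filtered. We then check that the image of $\Phi|_{Y_F}$ lands in $\Lambda_{F',0\bfalpha_i}$. By definition of type, $\fac_i(M)\simeq k_{m_i}^{\oplus c}$ and $K_i(M)=M'$ has type $F'$ (here $\Ima M_{i,\mathrm{in}}=N_i$ exactly, by the rank count). Moreover, $M'$ has $\fac_i(M')=0$, because any nonzero free part of $\fac_i(M')$ would enlarge $\fac_i(M)$. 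For the converse, a point in the vector bundle over $\Lambda_{F',0\bfalpha_i}$ defines $M$ with $\Ima M_{i,\mathrm{in}}=\Ima M'_{i,\mathrm{in}}=N_i$, so that $\fac_i(M)\simeq k_{m_i}^{\oplus c}$ and $K_i(M)=M'$ is of type $F'$. Hence $M$ is of type $F$, and it lies in $\Lambda_F$ because $\Lambda_F$ is the irreducible locus obtained via the Hecke correspondence of \cref{Rem/restrictionHeckeCorrespondence}. The conditions cut out exactly the type-$F$ locus, by \cref{Lem/typeCrystalComponent} and the remark following \cref{def/typeCrystalComponent}.

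The main obstacle is the equality $Y_F=\Phi^{-1}(\Lambda_{F',0\bfalpha_i})\cap\{\text{moment map}\}$. Concretely, every type-$F$ representation built from a point of $\Lambda_{F',0\bfalpha_i}$ must lie in the component $\Lambda_F$ rather than in another component containing some type-$F$ points. We handle this using irreducibility: the vector bundle over the irreducible $\Lambda_{F'}$-piece is irreducible, its image under $p$ is dense in the component $p(q^{-1}(\Lambda_{F'}))$, and $\Lambda_F$ is by definition the type-$F$ locus of that component. A second, more technical point is checking that the rank of the kernel of $v\mapsto\widetilde{M'}_{i,\mathrm{in}}\circ v$ is constant. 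We deduce this from freeness of $\ker \widetilde{M'}_{i,\mathrm{in}}$, using surjectivity onto the free module $N_i$ together with local freeness of the source modules.
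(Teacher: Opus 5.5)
Your argument follows the paper's route. $\Phi$ is a trivial bundle with fibre $\Hom_{k_{m_i}}(C,\bigoplus_{a:i\rightarrow j}k_a\otimes_{k_{m_j}}M_j)$. Then $Y_F$ is identified with $\Phi^{-1}(\Lambda_{F',0\bfalpha_i})\cap\mu^{-1}(0)$, using the Hecke correspondence with $\Ima(M_{i,\mathrm{in}})=N_i$ fixed. Finally, the remaining condition $\widetilde{M}'_{i,\mathrm{in}}\circ v=0$ cuts out $\mathcal{H}om(C,\Ker\widetilde{M}'_{i,\mathrm{in}})$, which has constant rank because $\widetilde{M}'_{i,\mathrm{in}}$ is a surjection of free modules. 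Your observation that the unipotent radical acts on the fibres by translations (by elements of $\Hom_{k_{m_i}}(C,\Ima M'_{i,\mathrm{out}})$) is correct, and it is what \cref{Lem/partialQuotient} later exploits. Two steps need repair, though.

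\textbf{The moment map equations at $j\neq i$.} You claim these involve $V$ through the terms with $a$ incident to $i$. That is false, and as written it leaves a hole: you impose only the vertex-$i$ equation on $v$, which is legitimate only if no other equation constrains $v$. The correct reason is that every arrow into $i$ (including $s_i$) lands in $N_i$. So any composite $j\rightarrow i\rightarrow j$ (or $\infty\rightarrow i\rightarrow\infty$) only sees the restriction of the outgoing arrow to $N_i$. Hence the equations at all vertices other than $i$, and the $N_i$-part of the equation at $i$, are exactly those of $M'=\Phi(x,y,s,t)$. They therefore hold automatically on $\Phi^{-1}(\Lambda_{F',0\bfalpha_i})$. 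The paper uses this silently when it reduces $\mu=0$ to $\widetilde{M}_{i,\mathrm{in}}\circ M_{i,\mathrm{out}}=0$.

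\textbf{The inclusion $\Phi(Y_F)\subset\Lambda_{F',0\bfalpha_i}$.} Knowing that $K_i(M)$ has type $F'$ only places it in the type-$F'$ locus. You also need that this whole locus equals $\Lambda_{F'}$. One way is induction on the length of the type: the locus is $p(q^{-1}(\cdot))$ of the previous one, hence irreducible and top-dimensional. Alternatively, as in the paper, run the Hecke correspondence from the component containing $\Lambda_F$, as you already do for the converse. Finally, $\fac_i(M')=0$ follows directly from $\Ima(M'_{i,\mathrm{in}})=\Ima(M_{i,\mathrm{in}})=N_i=M'_i$; the argument about enlarging $\fac_i(M)$ is not needed.
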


\begin{proof}
Note that the map $\Phi$ is a trivial vector bundle with fibre:
\[
\Hom_{k_{m_i}}\left( k_{m_i}^{\oplus c},\bigoplus_{a:i\rightarrow j}k_a\otimes_{k_{m_j}}k_{m_j}^{\oplus r_j}\right)
.
\]
Consider the pullback of that vector bundle to $\Lambda_{F',0\bfalpha_i}$:
\[
\begin{tikzcd}
\Phi^{-1}(\Lambda_{F',0\bfalpha_i}) \ar[r]\ar[d]\arrow[dr, phantom, "\ulcorner", very near start] & \Lambda_{F',0\bfalpha_i} \ar[d,hookrightarrow] \\
R(\overline{Q_\bff},\widehat{\bfm};\widehat{\bfr},N_i) \ar[r,"\Phi"] & R(\overline{Q}_\bff,\widehat{\bfm};\widehat{\bfr}') .
\end{tikzcd}
\]
Then it suffices to prove that $Y_F\subset\Phi^{-1}(\Lambda_{F',0\bfalpha_i})$ is a subbundle.

By construction, we have:
\[
\Phi^{-1}(\Lambda_{F',0\bfalpha_i})\cap \mathrm{nil}_{\widehat{\bfr}}
=
\Phi^{-1}(\Lambda_{F',0\bfalpha_i})\cap\mu_{\overline{Q_\bff},\widehat{\bfm};\widehat{\bfr}}^{-1}(0)
\subset
\mathrm{nil}_{\widehat{\bfr},c\bfalpha_i}
,
\]
i.e. $(x,y,s,t)\in\Phi^{-1}(\Lambda_{F',0\bfalpha_i})$ lies in $\mathrm{nil}_{\widehat{\bfr}}$ if and only if $\mu_{\overline{Q_\bff},\widehat{\bfm};\widehat{\bfr}}(x,y,s,t)=0$ and in that case, $(x,y,s,t)\in\mathrm{nil}_{\widehat{\bfr},c\bfalpha_i}$. In particular, for $(x,y,s,t)\in\Phi^{-1}(\Lambda_{F',0\bfalpha_i})\cap\mathrm{nil}_{\widehat{\bfr}}$, the representation $M(x,y,s,t)$ has type $F$. Consider now the (reduced) subscheme:
\[
\mathrm{nil}_{\widehat{\bfr},N_i;c\bfalpha_i}
:=
\mathrm{nil}_{\widehat{\bfr}',\widehat{\bfr};c\bfalpha_i}
\cap
\left(
R(\overline{Q_\bff},\widehat{\bfm};\widehat{\bfr},N_i)\times\{\varphi_i:N_i\hookrightarrow k_{m_i}^{\oplus r_i}\}
\right)
\subset
\mathrm{nil}_{\widehat{\bfr}',\widehat{\bfr};c\bfalpha_i}
.
\]
The restriction of the map $p\colon \mathrm{nil}_{\widehat{\bfr}',\widehat{\bfr};c\bfalpha_i}\rightarrow\mathrm{nil}_{\widehat{\bfr},c\bfalpha_i}$ (see \cref{def/quiverHeckeCorrespondence}) to $\mathrm{nil}_{\widehat{\bfr},N_i;c\bfalpha_i}$ coincides with the restriction of the closed immersion $R(\overline{Q_\bff},\widehat{\bfm};\widehat{\bfr},N_i)\hookrightarrow R(\overline{Q_\bff},\widehat{\bfm};\widehat{\bfr})$ to $\mathrm{nil}_{\widehat{\bfr},N_i;c\bfalpha_i}$. Likewise, the restrictions of the maps $q \colon\mathrm{nil}_{\widehat{\bfr}',\widehat{\bfr};c\bfalpha_i}\rightarrow\mathrm{nil}_{\widehat{\bfr}',0\bfalpha_i}$ and $\Phi$ to $\mathrm{nil}_{\widehat{\bfr},N_i;c\bfalpha_i}$ coincide. By \cref{Rem/restrictionHeckeCorrespondence}, we thus obtain:
\[
Y_F
=
\Phi^{-1}(\Lambda_{F',0\bfalpha_i})\cap\Lambda_{\widehat{\bfr}}
=
\Phi^{-1}(\Lambda_{F',0\bfalpha_i})\cap\mu_{\overline{Q_\bff},\widehat{\bfm};\widehat{\bfr}}^{-1}(0).
\]
Now, given $(x,y,s,t)\in\Phi^{-1}(\Lambda_{F',0\bfalpha_i})$ and the associated representation $M=M(x,y,s,t)$, we have $\mu_{\overline{Q_\bff},\widehat{\bfm};\widehat{\bfr}}(x,y,s,t)=0$ if and only if $\widetilde{M}_{i,\mathrm{in}}\circ M_{i,\mathrm{out}}=0$, by \cref{Lem/momentMapEqn}. The operator
\[
\widetilde{M}_{i,\mathrm{in}}(x\vert_N,y\vert_N,s\vert_N,t\vert_N)\circ (-)\colon \! \ \Hom_{k_{m_i}}\!\!\left(k_{m_i}^{\oplus c},\bigoplus_{a:i\rightarrow j}k_a\otimes_{k_{m_j}}k_{m_j}^{\oplus r_j} \!\right)
\rightarrow
\Hom_{k_{m_i}}(k_{m_i}^{\oplus c},k_{m_i}^{\oplus (r_i-c)})
\]
has constant rank on $\Lambda_{F',0\bfalpha_i}$, since $\widetilde{M}_{i,\mathrm{in}}(x\vert_N,y\vert_N,s\vert_N,t\vert_N)$ has constant rank, equal to $r_i-c$. This shows that $Y_F=\Phi^{-1}(\Lambda_{F',0\bfalpha_i})\cap\mu_{\overline{Q_\bff},\widehat{\bfm};\widehat{\bfr}}^{-1}(0)\subset\Phi^{-1}(\Lambda_{F',0\bfalpha_i})$ is the subbundle $\Ker(\widetilde{M}_{i,\mathrm{in}}\circ (-))$, as required.
\end{proof}

We will need the following well-known lemma in the following proofs.

\begin{lemma}\label{Lem/slicesPullback}
Let $G$ be an algebraic group and $f \colon X\rightarrow Y$ a $G$-equivariant map of $G$-schemes. Suppose that the action $G\curvearrowright Y$ admits a Zariski-locally trivial $G$-quotient. Then so does the action $G\curvearrowright X$.
\end{lemma}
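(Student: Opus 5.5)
The plan is to pull back the quotient of $Y$ along $f$. Let $\pi\colon Y\rightarrow Y/G$ be the given Zariski-locally trivial $G$-quotient. By assumption there is an open cover $\{V_\alpha\}$ of $Y/G$ together with $G$-equivariant isomorphisms $\pi^{-1}(V_\alpha)\simeq G\times V_\alpha$ over $V_\alpha$. Equivalently, there are sections $\sigma_\alpha\colon V_\alpha\rightarrow\pi^{-1}(V_\alpha)$, and the action map $G\times V_\alpha\rightarrow\pi^{-1}(V_\alpha)$, $(g,v)\mapsto g\cdot\sigma_\alpha(v)$, is an isomorphism. Its inverse has the form $y\mapsto(\gamma_\alpha(y),\pi(y))$, where $\gamma_\alpha\colon\pi^{-1}(V_\alpha)\rightarrow G$ is a morphism satisfying $\gamma_\alpha(g\cdot y)=g\gamma_\alpha(y)$.

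Set $U_\alpha:=f^{-1}(\pi^{-1}(V_\alpha))$, an open $G$-invariant cover of $X$. On $U_\alpha$, define the slice
\[
S_\alpha:=\{x\in U_\alpha\ \vert\ \gamma_\alpha(f(x))=e\},
\]
a closed subscheme of $U_\alpha$. I then show that the morphism $G\times S_\alpha\rightarrow U_\alpha$, $(g,x)\mapsto g\cdot x$, is an isomorphism. Its inverse should be
\[
x\mapsto\bigl(\gamma_\alpha(f(x)),\ \gamma_\alpha(f(x))^{-1}\cdot x\bigr).
\]
This lands in $S_\alpha$ because equivariance of $f$ and of $\gamma_\alpha$ gives $\gamma_\alpha(f(h^{-1}x))=h^{-1}\gamma_\alpha(f(x))$. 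The fact that the two maps are mutually inverse is a direct check using the same identity. So $U_\alpha\simeq G\times S_\alpha$ equivariantly, and the projection $U_\alpha\rightarrow S_\alpha$ is a trivial principal $G$-bundle. It is a categorical quotient, since any $G$-invariant morphism out of $G\times S_\alpha$ factors uniquely through the second projection (restrict along $e\times S_\alpha$).

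Next, glue the local quotients $S_\alpha$. On overlaps, both $S_\alpha\cap U_\beta$ and $S_\beta\cap U_\alpha$ are quotients of $U_\alpha\cap U_\beta$, so they are canonically identified. Concretely, $x\mapsto\gamma_\beta(f(x))^{-1}\cdot x$ gives an isomorphism $S_\alpha\cap U_\beta\rightarrow S_\beta\cap U_\alpha$. The cocycle condition is automatic because the identifications are induced by the universal property of categorical quotients. Gluing yields a scheme $X/G$ with a $G$-invariant morphism $X\rightarrow X/G$ that is Zariski-locally (on $X/G$) a trivial principal $G$-bundle. It is categorical, since being a categorical quotient is local on the target for such an invariant open cover (morphisms glue). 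Alternatively, one can note that $X/G$ is obtained as a scheme over $Y/G$ by descending $X$ along the torsor $Y\rightarrow Y/G$.

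The only point requiring any care is this gluing and the verification that the quotient is categorical in the category of all schemes rather than just locally. I expect this to be routine: a principal $G$-bundle is locally trivial, hence it is a universal categorical quotient, and that property is local on the base.
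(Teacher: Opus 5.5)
Your proof is correct and is essentially the paper's argument: your slice $S_\alpha=\{x\in U_\alpha\ \vert\ \gamma_\alpha(f(x))=e\}$ is exactly the preimage under $f$ of the slice $\sigma_\alpha(V_\alpha)$ in $Y$. Your inverse map $x\mapsto(\gamma_\alpha(f(x)),\gamma_\alpha(f(x))^{-1}\cdot x)$ is the same as the paper's $u\mapsto((g\circ f)(u),(g\circ f)(u)^{-1}\cdot u)$. The paper leaves the gluing of the local quotients and the categorical-quotient check implicit, and your explicit treatment of both is sound.
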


\begin{proof}
This amounts to providing $G$-slices for certain covering $G$-invariant opens in $X$. We argue that these can be obtained by pulling back such slices from $Y$. Indeed, suppose that $V\subset Y$ is a $G$-invariant open subset endowed with a $G$-equivariant isomorphism $V\simeq G\times S$, $v\mapsto (g(v),s(v))$. Then the map
\[
\begin{array}{rcl}
f^{-1}(V) & \rightarrow & G\times f^{-1}(S) \\
u & \mapsto & \left((g\circ f)(u),(g\circ f)(u)^{-1}\cdot u\right)
\end{array}
\]
is an isomorphism.
\end{proof}

We first take a partial quotient by the following normal subgroup of $P_{\bfm;\bfr,c\bfalpha_i}$.

\begin{notation}
Let $U_{\bfm;\bfr,c\bfalpha_i}\subset P_{\bfm;\bfr,c\bfalpha_i}$ be the normal subgroup such that the following decomposition holds:
\[
P_{\bfm;\bfr,c\bfalpha_i}:=U_{\bfm;\bfr,c\bfalpha_i}\rtimes (\GL_{\bfm,\bfr'}\times\GL_{m_i,c})
.
\]
\end{notation}

\begin{lemma}\label{Lem/partialQuotient}
The vector bundle $Y_F\rightarrow\Lambda_{F',0\bfalpha_i}$ induces a map $Y_F^\st\rightarrow\Lambda_{F',0\bfalpha_i}^\st$, which in turn induces a fibrewise, Zariski-locally trivial quotient:
\begin{equation}\label{eq relative morphism inductive step}
    Y_F^\st/(U_{\bfm;\bfr,c\bfalpha_i}\rtimes\GL_{m_i,c})\rightarrow\Lambda_{F',0\bfalpha_i}^\st .
\end{equation}
\end{lemma}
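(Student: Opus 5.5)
The plan has three steps: show that the vector bundle $\Phi\colon Y_F\rightarrow\Lambda_{F',0\bfalpha_i}$ of \cref{Lem/extensionMap} respects stability, describe its fibres over stable points, and then quotient each fibre by a jet-Grassmannian model as in \cref{Lem/toyJetParabolicQuotient}.

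\textbf{Step 1: stability descends along $\Phi$.} Let $(x,y,s,t)\in Y_F^\st$ and $M=M(x,y,s,t)$. The restriction $M':=M(\Phi(x,y,s,t))$ is the subrepresentation of $M$ supported on $N$, and $M/M'\simeq E_i^{\oplus c}$ because $\Ima(M_{i,\mathrm{in}})\subset N_i$. By \cref{Lem/stabilityHeckeModification}, $M$ is stable if and only if $M'$ is stable and $M_{i,\mathrm{out}}$ is injective. Hence $\Phi$ maps $Y_F^\st$ into $\Lambda_{F',0\bfalpha_i}^\st$. Moreover, $Y_F^\st$ is the open subset of $\Phi^{-1}(\Lambda_{F',0\bfalpha_i}^\st)$ cut out by injectivity of $M_{i,\mathrm{out}}$.

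\textbf{Step 2: the fibre over a stable point.} Fix a point of $\Lambda^\st_{F',0\bfalpha_i}$ with representation $M'$, and set
\[
W:=\bigoplus_{a\in(\overline{Q_\bff})_1,\ a:i\rightarrow j}k_a\otimes_{k_{m_j}}M'_j,
\]
a free $k_{m_i}$-module. Since $Q$ has no loops, $M_{i,\mathrm{out}}$ has the form $(\iota,\psi)\colon N_i\oplus k_{m_i}^{\oplus c}\rightarrow W$. Here $\iota=M'_{i,\mathrm{out}}$ is fixed and injective by \cref{Lem/stabilityCriterion}, and its image is a free submodule. I expect to check freeness using E-filteredness together with the moment map relation $\widetilde{M'}_{i,\mathrm{in}}\circ\iota=0$; this is where I would work hardest.

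The fibre of $Y_F$ is the linear subspace $\{\psi : \widetilde{M}_{i,\mathrm{in}}\circ\psi=0\}$, and $\widetilde{M}_{i,\mathrm{in}}$ is the surjection $W\twoheadrightarrow N_i$. So $\psi$ ranges over $\Hom_{k_{m_i}}(k_{m_i}^{\oplus c},K)$ with $K:=\Ker(\widetilde{M}_{i,\mathrm{in}})$, a free module containing $\Ima\iota$. The stable part of the fibre consists of the $\psi$ with $(\iota,\psi)$ injective, i.e.\ with $\Ima\psi$ free of rank $c$ and meeting $\Ima\iota$ trivially as a direct summand.

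\textbf{Step 3: the group action and the quotient.} The unipotent radical $U_{\bfm;\bfr,c\bfalpha_i}$ acts by $\psi\mapsto\psi+\iota u$ with $u\in\Hom(k_{m_i}^{\oplus c},N_i)$. The group $\GL_{m_i,c}$ acts by precomposition, and both actions preserve the base point $\Phi(x,y,s,t)$. This is precisely the toy model of \cref{Lem/toyJetParabolicQuotient}\,(i), now inside $K$: take $n'=r_i-c$, $n''=c$, and $x'=\iota$ with image in $K$.

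The $U$-slices are given by choosing local free complements $S$ of $\Ima\iota$ inside $K$. After that, the $\GL_{m_i,c}$-quotient of $\Hom^{\mathrm{inj}}(k_{m_i}^{\oplus c},S)$ is the jet Grassmannian. This yields a fibrewise quotient with fibre $\mathrm{Gr}_{m_i}(c,\rk K-r_i+c)$.

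To globalise, I would cover $\Lambda_{F',0\bfalpha_i}^\st$ by opens on which $K$ is trivialised and a complement $S$ of $\Ima\iota$ exists. Here $K$ is a vector bundle because $\widetilde{M}_{i,\mathrm{in}}$ has constant rank, and local complements exist because the inclusion $\Ima\iota\subset K$ is a subbundle inclusion of free modules. On each such open the slice $Y_S=\{\psi:\Ima\psi\subset S,\ \psi\ \text{injective}\}$ gives a product decomposition, and these slices glue exactly as in the proof of \cref{lemma toy model without mult}.

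The main obstacle is showing that $\Ima\iota$ is locally a direct summand of $K$ in families. This requires flatness over the base, i.e.\ constant rank of $\iota$ modulo $\epsilon$. I would deduce it from the fact that $\iota$ is injective between free $k_{m_i}$-modules, so it is split, which can be checked on the $\epsilon$-socle.
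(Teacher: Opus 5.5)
Your proposal is correct and follows the paper's proof: stability is handled by \cref{Lem/stabilityHeckeModification}, the fibres are $\Hom_{k_{m_i}}(k_{m_i}^{\oplus c},\Ker(\widetilde{M}_{i,\mathrm{in}}))$, and over an open $V$ trivialising $\Ker(\widetilde{M}_{i,\mathrm{in}})\simeq k_{m_i}^{\oplus p}$ one is in the situation of \cref{Lem/toyJetParabolicQuotient}. The only differences are in packaging: the paper does not build the relative slices by hand but maps $\Phi^{-1}(V^\st)^\st$ equivariantly to $\Hom^{\mathrm{inj}}_{k_{m_i}}(k_{m_i}^{\oplus r_i},k_{m_i}^{\oplus p})$ and pulls back the toy-model slices via \cref{Lem/slicesPullback}, so your ``main obstacle'' is absorbed into the cover of the injective locus by the opens $Z_S$ (justified by your socle argument), and your freeness worry in Step 2 is unnecessary, since $\Ima\iota\simeq N_i$ is free because $\iota$ is injective and is a direct summand because injective maps between free $k_{m_i}$-modules split.
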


\begin{proof}
Recall that, by the proof of \cref{Lem/extensionMap}, the map $\Phi\colon Y_F\rightarrow\Lambda_{F',0\bfalpha_i}$ may be identified with the vector bundle:
\[
\Ker(\widetilde{M}_{i,\mathrm{in}}\circ (-))=\mathcal{H}om_{k_{m_i}}(k_{m_i}^{\oplus c},\Ker(\widetilde{M}_{i,\mathrm{in}}))\subset \Lambda_{F',0\bfalpha_i}\times\Hom_{k_{m_i}}\left( k_{m_i}^{\oplus c},\bigoplus_{a:i\rightarrow j}k_a\otimes_{k_{m_j}}k_{m_j}^{\oplus r_j}\right)
.
\]

Let $V\subset\Lambda_{F',0\bfalpha_i}$ be a trivialising open subset for $\Ker(\widetilde{M}_{i,\mathrm{in}})$. Let $p:=\rk_{k_{m_i}}(\Ker(\widetilde{M}_{i,\mathrm{in}}))$. Then, by \cref{Lem/stabilityHeckeModification}, we have the following diagram:
\[
\begin{tikzcd}
\Phi^{-1}(V^\st)^\st \ar[d]\ar[r,hookrightarrow]\arrow[dr, phantom, "\ulcorner", very near start] & \Phi^{-1}(V^\st) \ar[d]\ar[r,"\Phi"]\arrow[dr, phantom, "\ulcorner", very near start] & V^\st \ar[d] \\
\Hom_{k_{m_i}}^{\mathrm{inj}}(k_{m_i}^{\oplus r_i},k_{m_i}^{\oplus p}) \ar[r, hookrightarrow] & \Psi^{-1}(\Hom_{k_{m_i}}^{\mathrm{inj}}(N_i,k_{m_i}^{\oplus p})) \ar[r,"\Psi"] & \Hom_{k_{m_i}}^{\mathrm{inj}}(N_i,k_{m_i}^{\oplus p}) ,
\end{tikzcd}
\]
where $\Psi\colon \Hom_{k_{m_i}}(k_{m_i}^{\oplus r_i},k_{m_i}^{\oplus p})\rightarrow\Hom_{k_{m_i}}(N_i,k_{m_i}^{\oplus p})$ is the restriction morphism and the vertical arrows are the $(U_{\bfm;\bfr,c\bfalpha_i}\rtimes\GL_{m_i,c})$-equivariant maps forgetting arrows of $Q$ with source different from $i$. By combining \cref{Lem/toyJetParabolicQuotient} and \cref{Lem/slicesPullback} applied to the leftmost arrow in the above diagram, we obtain a Zariski-locally trivial $(U_{\bfm;\bfr,c\bfalpha_i}\rtimes\GL_{m_i,c})$-quotient: 
\[
\Phi^{-1}(V^\st)^\st/(U_{\bfm;\bfr,c\bfalpha_i}\rtimes\GL_{m_i,c}).
\]
By gluing these quotients for different trivialising opens $V$, we obtain the required quotient $Y_F^\st/(U_{\bfm;\bfr,c\bfalpha_i}\rtimes\GL_{m_i,c})$.
\end{proof}

\begin{proof}[Proof of \cref{prop/quotientInductiveStep}] 
By assumption there is a Zariski-locally trivial $\GL_{\bfm,\bfr'}$-quotient of $\Lambda_{F'}^\st$, and thus also of the open invariant subset $\Lambda_{F',0\bfalpha_i}^\st$. Then the result about the $P_{\bfm;\bfr, c\bfalpha_i}$-quotient follows by applying \cref{Lem/slicesPullback} to the $\GL_{\bfm,\bfr'}$-equivariant morphism \eqref{eq relative morphism inductive step} constructed in \cref{Lem/partialQuotient}. The last statement follows by \cref{Lem/parabolicReduction}.
\end{proof}

This completes the proof of \cref{Thm/quotientConstruction}.

\begin{remark}\label{Rem/fibresRecursiveQuotient}
The above proofs implies that the map:
\[
\Lambda_F/\GL_{\bfm,\bfr}\simeq Y_F/P_{\bfm;\bfr,c\bfalpha_i}\rightarrow\Lambda_{F',0\bfalpha_i}/\GL_{\bfm,\bfr'}
\]
is a Zariski-locally trivial fibration with the same fibres as the map:
\[
\Hom_{k_{m_i}}^{\mathrm{inj}}(k_{m_i}^{\oplus r_i},k_{m_i}^{\oplus p})/(U_{\bfm;\bfr,c\bfalpha_i}\rtimes\GL_{m_i,c})
\rightarrow
\Hom_{k_{m_i}}^{\mathrm{inj}}(N_i,k_{m_i}^{\oplus p}),
\]
which, by \cref{Lem/toyJetParabolicQuotient}, are isomorphic to:
\[
\mathrm{Gr}_{m_i}\left( k_{m_i}^{\oplus c},\Ker(\widetilde{M}'_{i,\mathrm{in}})/\Ima(M'_{i,\mathrm{out}})\right)
=\mathrm{Gr}_{m_i}(c,p-r'_i)
=\mathrm{Gr}_{m_i}\left(c,\left\langle h_i,\lambda-\sum_{j\in Q_0}r'_j\alpha_j\right\rangle\right)
,
\]
where $M'$ is the representation corresponding to the base point in $\Lambda_{F',0\bfalpha_i}/\GL_{\bfm,\bfr'}$. Indeed, by surjectivity of $\widetilde{M}_{i,\mathrm{in}}$, we have:
\[
p=\rk\Ker(\widetilde{M}_{i,\mathrm{in}})=f_i+\sum_{j\ne i}c_{ij}r_j-r_i+c=f_i+\sum_{j\ne i}c_{ij}r'_j-r'_i.
\]
Note that fibres are non-empty if, and only if
\[
c\leq\left\langle h_i,\lambda-\sum_{j\in Q_0}r'_j\alpha_j\right\rangle,
\]
which is consistent with \cref{Prop/crystalOperatorRange} below (note that, in the notation of that Proposition, $c=0$ and $l$ in the Proposition correspond to $c$ in this Remark).
\end{remark}

\begin{example}
Consider the quiver $(Q,\bfm)$ as in \cref{Exmp/NonMaxComponentsTypeC2} and fix $\bfr=(2,2)$, $\bff=(2,2)$. Then $\Lambda_{Q_\bff,\widehat{\bfm};\widehat{\bfr}}^\st$ parametrises representations of the form:
\[
\begin{tikzcd}
k^{\oplus 2}\ar[r,"A",shift left]\ar[d,"t_1"] & k_2^{\oplus 2} \ar[l,"B", shift left]\ar[d,"t_2"] \\
k^{\oplus 2} & k_2^{\oplus 2},
\end{tikzcd}
\]
which satisfy (but are not characterised by) the following properties: $AB=0$, $BA[\epsilon]=0$ and the linear maps
\[
\begin{array}{cccc}
M_{1,\mathrm{out}}: & k^{\oplus 2} & \overset{(A,t_1)}{\longrightarrow} & k_2^{\oplus 2}\oplus k^{\oplus 2} \\
M_{2,\mathrm{out}}: & k_2^{\oplus 2} & \overset{(B,t_2)}{\longrightarrow} & k_2^{\oplus 2}\oplus k_2^{\oplus 2}
\end{array}
\]
are injective. Note that the forgetful map $\Lambda_{Q_\bff,\widehat{\bfm};\widehat{\bfr}}^\st\rightarrow\Lambda_{Q,\bfm;\bfr}$ induces an injection on irreducible components (see \cref{Prop/geometricCriteriaB(lambda)} below for a general proof).

Consider the component $Z\subset\Lambda_{Q_\bff,\widehat{\bfm};\widehat{\bfr}}^\st$ lying above the orbit closure of
\[
(A_0,B_0)=((\begin{smallmatrix}1 & 0 \\ 0 & 0\end{smallmatrix}),(\begin{smallmatrix}0 & 0 \\ 1 & 0\end{smallmatrix})),
\]
which is a crystal component of $\Lambda_{Q,\bfm;\bfr}$ as discussed in \cref{Exmp/cokernelLociCrystalComponents}. Then a general representation $M(A,B,t)$ for $ (A,B,t)\in Z$ admits filtrations of types $F_1=(\bfalpha_2,\bfalpha_1,\bfalpha_2,\bfalpha_1,\bfalpha_\infty)$ and $F_2=(\bfalpha_1,2\bfalpha_2,\bfalpha_1,\bfalpha_\infty)$. One can check that $Z_{F_1}$ is the locus where $(A,B)$ lies in the orbit of $(A_0,B_0)$, whereas $Z_{F_2}$ is the locus where $\fac_1(M)$ has rank $1$. Therefore, we have $Z_{F_1}\subsetneq Z_{F_2}$. Note that, since $r_i\leq f_i$ for all $i\in Q_0$, the map
\[
Z\rightarrow\overline{\GL_{\bfm,\bfr}\cdot (A_0,B_0)}
\]
is surjective (the maps $t_i$ for $ i\in Q_0$ can be chosen to be injective).

For the type $F = F_2$, let us construct the quotient $Z_{F}/\GL_{\bfm,\bfr}$. inductively. We set $F'=(2\bfalpha_2,\bfalpha_1,\bfalpha_\infty)$, $\bfr'=(1,2)$ and $F''=(\bfalpha_1,\bfalpha_\infty)$, $\bfr''=(1,0)$. By (the proof of) \cref{prop/quotientBaseStep}, we have $Z_{F''}/\GL_{\bfm,\bfr''}\simeq\mathbb{P}^1$. Moreover, for any $(x,y,s,t)\in Z_{F''}$ and $M''=M(x,y,s,t)$ we have that $M''_{2,\mathrm{in}}$ is surjective, $\Ima(M''_{2,\mathrm{out}})=0$ and $\Ker(M''_{2,\mathrm{in}})=k_2\oplus k_2^{\oplus 2}$. Therefore, by \cref{Rem/fibresRecursiveQuotient}, the map
\[
Z_{F'}/\GL_{\bfm,\bfr'}\rightarrow Z_{F'',0\bfalpha_2}/\GL_{\bfm,\bfr''}\simeq\mathbb{P}^1
\]
is a fibration, with fibres isomorphic to $\mathrm{Gr}_2(2,3)\simeq\mathrm{T}\mathbb{P}^2$, parametrising the subspace $(B',t'_2):k_2^{\oplus 2}\hookrightarrow k_2\oplus k_2^{\oplus 2}$. Finally, for $(x,y,s,t)\in Z_{F'}$ and $M'=M(x,y,s,t)$ we have $(x,y,s,t)\in Z_{F',0\bfalpha_1}$ if and only if $B':k_2^{\oplus 2}\rightarrow k_2$ is surjective. Furthermore, we have:
\begin{align*}
& \Ima(M'_{1,\mathrm{out}})=\Ima(t'_1)\subset k_2^{\oplus 2}\oplus k^{\oplus 2} \\
& \Ker(M'_{1,\mathrm{in}})=\Ker(B')\oplus k^{\oplus 2}\subset k_2^{\oplus 2}\oplus k^{\oplus 2}.
\end{align*}
Therefore, $Z_{F',0\bfalpha_1}\subset Z_{F'}$ is the open subset of the above $\mathrm{T}\mathbb{P}^2$-fibration which restricts on fibres to the locus where the map $k_2^{\oplus 2}\hookrightarrow k_2\oplus k_2^{\oplus 2}\twoheadrightarrow k_2$ is surjective. Likewise, the map
\[
Z_F/\GL_{\bfm,\bfr}\rightarrow Z_{F',0\bfalpha_1}/\GL_{\bfm,\bfr'}
\]
is a $\mathbb{P}^3$-fibration, whose fibres parametrise the subspace $(A,t_1):k\hookrightarrow\Ker(B')\oplus\mathrm{Coker}(t'_1)$.

Summing up, $Z_F/\GL_{\bfm,\bfr}$ is a tower of two fibrations over $\mathbb{P}^1$, with respective fibres an open locus in $\mathrm{T}\mathbb{P}^2$ and $\mathbb{P}^3$.
\end{example}

\section{Prerequisites on crystals}\label{Sect/prerequisitesCrystals}

In this section, we collect prerequiste results concerning crystals and their geometric realisations \cite{KS97,Sai02,GLS18a}. In particular, we adapt Saito's strategy for constructing crystals of irreducible highest-weight representations of Kac--Moody algebras \cite{Sai02} to the setting of quivers with multiplicities. Recall that we assume $Q$ has no loop arrows.

\subsection{Kashiwara crystals}

In this section, we recall basic facts concerning Kashiwara crystals attached to a symmetrisable Cartan datum. In particular, we recall a characterisation of the crystals of highest-weight irreducible representations of Kac--Moody algebras.

Let $C = (c_{ij})_{i,j \in I}$ be a symmetrisable generalised Cartan matrix indexed by $I$, let $\mathfrak{g}$ be the associated Kac--Moody algebra and $P$ be its weight lattice (see \cite{Kac95} for background). For $i \in I$, let $\alpha_i$ (resp.\ $h_i$, $\varpi_i$) denote the corresponding simple root (resp.\ simple coroot, fundamental weight) of $\mathfrak{g}$. In particular, we have the following pairings, for all $i,j\in I$:
\begin{align}\label{pairings/rootscoroots}
\langle h_i,\alpha_j\rangle & =c_{ij}, \\
\langle h_i,\varpi_j\rangle & =\delta_{ij}. \notag
\end{align}

\begin{definition}
A $\mathfrak{g}$-crystal is the datum of a set $B$, along with maps:
\begin{itemize}
\item $\mathrm{wt} \colon B\rightarrow P$ (which we refer to as a weight function),
\item $e_i,f_i \colon B\rightarrow B\sqcup\{0\}$ for every $i\in I$ (which we refer to as raising and lowering operators),
\item $\epsilon_i,\phi_i \colon B\rightarrow\bbZ\sqcup\{-\infty\}$ for every $i\in I$,
\end{itemize}
satisfying the following axioms:
\begin{enumerate}[label=(C\arabic*)]
\item\label{C1} For all $i\in I,\ b\in B$, we have $ \phi_i(b)=\epsilon_i(b)+\langle h_i,\mathrm{wt}(b)\rangle$,
\item\label{C2} For all $b\in B$ such that $e_i(b)\in B$, we have 
$
\left\{
\begin{array}{l}
\mathrm{wt}(e_i(b))=\mathrm{wt}(b)+\alpha_i \\
\epsilon_i(e_i(b))=\epsilon_i(b)-1 \\
\phi_i(e_i(b))=\phi_i(b)+1
\end{array}
\right.
$,
\item\label{C3} For all $b\in B$ such that $f_i(b)\in B$, we have $
\left\{
\begin{array}{l}
\mathrm{wt}(f_i(b))=\mathrm{wt}(b)-\alpha_i \\
\epsilon_i(f_i(b))=\epsilon_i(b)+1 \\
\phi_i(f_i(b))=\phi_i(b)-1
\end{array}
\right.
$,
\item\label{C4} For all $ i\in I,\ b,b'\in B$ we have$\ b=e_i(b')\Leftrightarrow b'=f_i(b)$,
\item\label{C5} If $\epsilon_i(b)=-\infty$ (or equivalently, if $\phi_i(b)=-\infty$), then $e_i(b)=f_i(b)=0$.
\end{enumerate}
\end{definition}

\begin{remark}\label{Rem/crystalOperatorsE/F}
By \ref{C4}, the operators $e_i$ are determined by the operators $f_i$, and vice versa (see for instance \cite[\S7]{BK10}). Similarly, when the weight function is specified, by \ref{C1} the functions $\epsilon_i$ and determined by the functions $\phi_i$, and vice versa.
\end{remark}

\begin{notation}
Given a crystal $B$, a weight $\lambda\in P$ and $i\in Q_0$, $c\in\bbZ$, we use the following notation: $B[\lambda]:=\mathrm{wt}^{-1}(\lambda)$ and $B[\lambda]_{i,c}:=B[\lambda]\cap\epsilon_i^{-1}(c)$.
\end{notation}

 
The crystals we will consider in this paper all come from highest-weight representations of the quantised universal enveloping algebra of $\mathfrak{g}$. We will next introduce highest-weight crystals, and show that such a crystal is completely determined by its highest-weight elements and the lowering operators ${f}_i$, as we will use this result later in our proofs.

\begin{definition}
A $\mathfrak{g}$-crystal $B$ is called highest-weight if there exists a distinguished element $b_+\in B$, called the highest-weight element, such that:
\begin{itemize}
\item for every $b\in B$, there exist $s\geq0$ and  a (possibly empty) sequence $(i_1,\ldots,i_s)\in I^s$ such that $b=({f_{i_1}}\circ\ldots\circ{f_{i_s}})(b_+)$,
\item for all $b\in B$, for all $i\in I$, we have $\epsilon_i(b)=\max\{k\geq0\ \vert\ e_i^k(b)\ne0\}$.
\end{itemize}
\end{definition}

\begin{lemma}\label{Lem/HWcrystalsF-operators}
Let $B$ be a highest-weight $\mathfrak{g}$-crystal with highest-weight element $b_+$. Then the crystal structure on $B$ is uniquely determined by the weight $\mathrm{wt}(b_+)$ of the highest-weight element, and the lowering operators $f_i$ for all $i \in I$.
\end{lemma}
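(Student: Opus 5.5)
We must show: given two highest-weight crystal structures on the same set $B$ with the same highest-weight element $b_+$, the same value $\mathrm{wt}(b_+)$ and the same lowering operators $f_i$, all the remaining data $\mathrm{wt}$, $e_i$, $\epsilon_i$, $\phi_i$ coincide. The plan is to recover each piece of data in turn from $f_i$ and $\mathrm{wt}(b_+)$, using the axioms together with the two defining properties of highest-weight crystals.

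First, the weight function. Every $b\in B$ can be written as $b=(f_{i_1}\circ\cdots\circ f_{i_s})(b_+)$. Applying \ref{C3} repeatedly gives
\[
\mathrm{wt}(b)=\mathrm{wt}(b_+)-\sum_{t=1}^s\alpha_{i_t}.
\]
The right-hand side depends only on $\mathrm{wt}(b_+)$ and the word $(i_1,\ldots,i_s)$, and the word is determined by the operators $f_i$. Different words producing the same $b$ give the same value, because $\mathrm{wt}$ is a well-defined function on $B$ in each of the two structures. Since both structures satisfy this formula, their weight functions agree.

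Second, the raising operators. By \ref{C4}, $e_i(b)=b'$ if and only if $f_i(b')=b$, and $e_i(b)=0$ if no such $b'$ exists. Thus $e_i$ is determined by $f_i$, as in \cref{Rem/crystalOperatorsE/F}.

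Third, the functions $\epsilon_i$ and $\phi_i$. The highest-weight condition gives $\epsilon_i(b)=\max\{k\geq0\mid e_i^k(b)\neq0\}$, and this is determined by $e_i$, hence by $f_i$. The maximum is finite: each application of $e_i$ lowers $\epsilon_i$ by one by \ref{C2}, and $\epsilon_i(b)$ is an integer here because it is defined as a maximum over $k\geq0$. Finally, \ref{C1} gives $\phi_i(b)=\epsilon_i(b)+\langle h_i,\mathrm{wt}(b)\rangle$, which is determined by the previous steps.

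There is no real obstacle. The only point needing care is that the weight formula does not depend on the chosen word. This is automatic once we compare two given crystal structures, since each one already has a well-defined weight function satisfying the same formula.
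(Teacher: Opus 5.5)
Your proposal is correct and follows the same route as the paper's proof. It recovers the weight from $\mathrm{wt}(b_+)$ by writing $b$ as a word in the $f_i$ applied to $b_+$ and using (C3), gets $e_i$ from $f_i$ via (C4), gets $\epsilon_i$ from $e_i$ via the maximum formula in the definition of a highest-weight crystal, and gets $\phi_i$ via (C1). Your side remark on finiteness of that maximum is not needed for uniqueness; if you keep it, the clean justification is that $\epsilon_i$ takes values in $\mathbb{Z}\sqcup\{-\infty\}$ while the maximum is at least $0$.
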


\begin{proof}
The weight function is determined by its value on $b_+$, as for any $b\in B$, we can write $b={f_{i_1}}\circ\ldots {f_{i_s}}(b_+)$, then $\mathrm{wt}(b)=\mathrm{wt}(b_+)-\sum_{t=1}^s\alpha_t$. As noted in \cref{Rem/crystalOperatorsE/F}, the operators $e_i$ are determined by the operators $f_i$, and the functions $\phi_i$ are determined by $\epsilon_i$. The latter is already known, as the functions $\epsilon_i$ are determined by the operators $e_i$ as per the second part of the definition of a highest-weight crystal.
\end{proof}

The next examples introduce the two main crystals which we consider in this paper and their properties. One is attached to irreducible highest-weight representations of the quantised universal enveloping algebra of $\mathfrak{g}$, whereas the other is attached to the negative part of that quantum algebra. We will also make use of an auxiliary crystal $T(\lambda)$, which allows us to shift weights by $\lambda$ using a tensor product of crystals.

\begin{example}\label{Exmp/B(infty)}
There is a highest-weight $\mathfrak{g}$-crystal $B(C;\infty)$ (of highest weight $0$) associated to the crystal basis of the quantised enveloping algebra $\mathcal{U}_q^-(\mathfrak{g})$. We denote by $b(\infty)$ its highest-weight element. The crystal basis was constructed by Kashiwara in \cite{Kas91}. It satisfies the following property: for all $b\in B$ and $i\in I$, we have:
\[
\epsilon_i(b)=\max\{k\geq0\ \vert\ e_i^k(b)\ne0\}
.
\]
Moreover, for all $i\in I$, we have $f_i(B(C;\infty))\subset B(C;\infty)$.
\end{example}

\begin{example}\label{Exmp/B(lambda)}
Let $\lambda\in P$ be a dominant (integral) weight. There is a highest-weight $\mathfrak{g}$-crystal $B(C;\lambda)$ (of highest weight $\lambda$) associated to the crystal basis of the simple highest-weight module $L(\lambda)$ over the quantised enveloping algebra $\mathcal{U}_q(\mathfrak{g})$. We denote by $b(\lambda)$ its highest-weight element. The crystal basis was also constructed in \cite{Kas91}. It satisfies the following property: for all $b\in B$ and $i\in I$, we have:
\begin{align*}
\epsilon_i(b)=\max\{k\geq0\ \vert\ e_i^k(b)\ne0\} , \\
\phi_i(b)=\max\{k\geq0\ \vert\ f_i^k(b)\ne0\} .
\end{align*}
\end{example}

\begin{example}\label{Exmp/T(lambda)}
Given an (integral) weight $\lambda\in P$, there is an auxiliary crystal $T(\lambda)$ with underlying set $\{t(\lambda)\}$. The crystal operators are given by $e_i(t(\lambda))=f_i(t(\lambda))=0$, for $i\in I$ and the functions $\mathrm{wt},\epsilon_i,\phi_i$ for $ i\in I$ are given by $\mathrm{wt}(t(\lambda))=\lambda$ and $\epsilon_i(t(\lambda))=\phi_i(t(\lambda))=-\infty$.
\end{example}

There is a notion of tensor product of crystals (see \cite[\S 7.3]{Kas95}) encoding the compatibility of crystal bases with tensor products of modules over the quantised enveloping algebra. We will only use the following tensor product by $T(\lambda)$ in this paper. Note that the tensor product bifunctor on the category of crystals is not symmetric.

\begin{definition}
Let $B$ be a $\mathfrak{g}$-crystal and $\lambda\in P$ be an (integral) weight. We define $B\otimes T(\lambda)$ as the crystal with underlying set $B$, crystal operators $e_i,f_i$ for $ i\in Q_0$ being those of $B$ and functions $\mathrm{wt}',\epsilon'_i,\phi'_i$ for $ i\in Q_0$ given by:
\[
\left\{
\begin{array}{l}
\mathrm{wt'}(b)=\mathrm{wt}(b)+\lambda \\
\epsilon'_i(b)=\epsilon_i(b) \\
\phi'_i(b)=\phi_i(b)+\langle h_i,\lambda\rangle
\end{array}
\right.
,
\]
for all $b\in B$. Given $b\in B$, we denote by $b\otimes t(\lambda)$ the corresponding element in $B\otimes T(\lambda)$.
\end{definition}

Our geometric realisation of $B(C;\lambda)$ will rely on a projection between two crystals.

\begin{definition}
Let $B_1,B_2$ be $\mathfrak{g}$-crystals. A morphism of $\mathfrak{g}$-crystals from $B_1$ to $B_2$ is a map $\psi \colon B_1\sqcup\{0\}\rightarrow B_2\sqcup\{0\}$ satisfying the following conditions:
\begin{itemize}
\item $\psi(0)=0$,
\item for all $b\in B_1$, if $\psi(b)\ne0$, then $
\left\{
\begin{array}{l}
\mathrm{wt}(\psi(b))=\mathrm{wt}(b) \\
\epsilon_i(\psi(b))=\epsilon_i(b) \\
\phi_i(\psi(b))=\phi_i(b)
\end{array}
\right.
$,
\item for every $b\in B_1$ such that $\psi(b)\ne0$ and $\psi(e_i(b))\ne0$ (resp.\ $\psi(=f_i(b))\ne0$), we have $e_i(\psi(b))=\psi(e_i(b))$ (resp.\ $f_i(\psi(b))=\psi(f_i(b))$).
\end{itemize}
The morphism $\psi$ is called strict if it commutes with $e_i,f_i$ for all $\ i\in I$ (with the convention that $e_i(0)=f_i(0)=0$).
\end{definition}

The following characterisation of $B(C;\lambda)$ is key to Saito's geometric construction of $B(C;\lambda)$ from the construction of $B(C;\infty)$ by Kashiwara--Saito in the symmetric case \cite{KS97,Sai02}. We will closely follow this strategy in the symmetrisable case. 

\begin{proposition}\label{Prop/SaitoCriterion}
\emph{\cite[Prop.\ 2.3.1]{Sai02}}
Let $B$ be a $\mathfrak{g}$-crystal. Let $\lambda\in P$ be a dominant (integral) weight and $b_\lambda\in B$ be an element of weight $\lambda$. Suppose that:
\begin{enumerate}[label=\roman*)]
\item $b_\lambda$ is the unique element of $B$ with weight $\lambda$;
\item there is a strict surjective morphism of crystals
\[
\Phi\colon B(C;\infty)\otimes T(\lambda)\sqcup\{0\}\rightarrow B\sqcup\{0\}
\]
mapping $b(\infty)\otimes t(\lambda)$ to $b_\lambda$;
\item $\Phi$ induces a bijection between $B$ and $\{ b\otimes t(\lambda)\in B(C;\infty)\otimes T(\lambda)\ \vert\ \Phi(b\otimes t(\lambda))\ne0\}$;
\item for all $b\in B$ and $i\in I$, we have:
\begin{align*}
\epsilon_i(b)=\max\{k\geq0\ \vert\ e_i^k(b)\ne0\} , \\
\phi_i(b)=\max\{k\geq0\ \vert\ f_i^k(b)\ne0\} .
\end{align*}
\end{enumerate}
Then $\Phi$ induces an isomorphism of crystals $B(C;\lambda) \simeq B$.
\end{proposition}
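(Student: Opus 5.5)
The plan is to compare $B$ with the image of $\Phi$ and show that $\Phi$ identifies $B(C;\lambda)$ with $B$, using the standard properties of $B(C;\lambda)$ and $B(C;\infty)$. The first move is to recall Kashiwara's strict morphism $\pi_\lambda\colon B(C;\infty)\otimes T(\lambda)\sqcup\{0\}\rightarrow B(C;\lambda)\sqcup\{0\}$. It is induced by $U_q^-(\mathfrak{g})\twoheadrightarrow L(\lambda)$ and sends $b(\infty)\otimes t(\lambda)\mapsto b(\lambda)$. It is surjective, and it is injective on the complement of $\pi_\lambda^{-1}(0)$. Moreover, $B(C;\lambda)$ satisfies iv) (\cref{Exmp/B(lambda)}). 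The aim is then to show that $\Phi$ and $\pi_\lambda$ have the same kernel, i.e.\ $\Phi(b\otimes t(\lambda))=0$ if and only if $\pi_\lambda(b\otimes t(\lambda))=0$. Granting this, iii) and the analogous property of $\pi_\lambda$ give a bijection $B\simeq B(C;\lambda)$. This bijection commutes with the $f_i$ because both maps are strict, and it sends $b_\lambda\mapsto b(\lambda)$. Both crystals are highest weight: every element is $f_{i_1}\cdots f_{i_s}$ applied to the image of $b(\infty)\otimes t(\lambda)$, and iv) holds. So \cref{Lem/HWcrystalsF-operators} upgrades the bijection to an isomorphism of crystals.

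To compare kernels, I would argue by induction on the length of $b$ as a monomial $f_{i_1}\cdots f_{i_s}b(\infty)$, or equivalently on the depth of $\lambda-\mathrm{wt}$. The base case is $b(\infty)\otimes t(\lambda)$, which is sent to $b_\lambda\neq0$ and to $b(\lambda)\neq0$. For the inductive step, write $b=f_i b'$, where $b'\otimes t(\lambda)$ has nonzero image under both maps; we may take the images to correspond under the bijection already established in lower depth. By strictness, $\Phi(b\otimes t(\lambda))=f_i\Phi(b'\otimes t(\lambda))$, and the same holds for $\pi_\lambda$. By iv), $f_i$ of the image is nonzero exactly when $\phi_i(\text{image})>0$. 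Morphisms preserve $\phi_i$, so this value equals $\phi_i(b'\otimes t(\lambda))=\phi_i(b')+\langle h_i,\lambda\rangle$ in both crystals. Hence the two kernels agree at depth $s$. One also needs the converse direction, that every $b\otimes t(\lambda)$ with $\Phi(b\otimes t(\lambda))\neq0$ is reached along such a chain. For this I would pick $i$ with $\epsilon_i(b)>0$ in $B(C;\infty)$ and use $b'=e_ib$. Strictness gives $e_i\Phi(b\otimes t(\lambda))=\Phi(b'\otimes t(\lambda))$. The left side is nonzero because $\epsilon_i$ of $\Phi(b\otimes t(\lambda))$ equals $\epsilon_i(b)>0$ and iv) holds.

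The main obstacle I expect is making this induction consistent. The decomposition $b=f_ib'$ is not unique, and one must check that the answer does not depend on the chosen path. Strictness resolves this: for any chosen path the image is computed by applying the same $f_i$'s. Hypothesis i) is used to pin down that $\Phi$ hits $b_\lambda$ only from $b(\infty)\otimes t(\lambda)$, and to ensure that $B$ contains no extra highest-weight elements. Any $b\in B$ with all $e_ib=0$ must have weight $\lambda$ by surjectivity and iii), hence equals $b_\lambda$. This gives the connectedness needed to apply \cref{Lem/HWcrystalsF-operators}.
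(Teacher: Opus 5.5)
The paper gives no proof of this statement; it imports it from \cite[Prop.\ 2.3.1]{Sai02}. Your argument is a correct, self-contained proof that reduces everything to Kashiwara's projection $\pi_\lambda$. Both $\Phi$ and $\pi_\lambda$ commute with the $f_i$, preserve $\phi_i$ off their kernels, and land in crystals satisfying the $\phi_i$-part of iv). So, for a nonzero image of $b'\otimes t(\lambda)$, the vanishing of its $f_i$ is on both sides the single condition $\phi_i(b')+\langle h_i,\lambda\rangle=0$. Induction on depth then shows the two kernels coincide, and iii) together with the corresponding property of $\pi_\lambda$ gives the bijection.

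The approach is also more economical than the hypotheses require. You never use i). You do not really need \cref{Lem/HWcrystalsF-operators} or the $\epsilon_i$-part of iv) either. If $K$ denotes the common non-kernel, the map $\theta:=\pi_\lambda\circ(\Phi\vert_K)^{-1}$ preserves $\mathrm{wt},\epsilon_i,\phi_i$ and commutes with the $f_i$. Hence it commutes with the $e_i$ by (C4). Your ``converse direction'' and your worry about path-independence are also unnecessary. For any decomposition $b=f_ib'$, the induction hypothesis puts $b'$ in both kernels or in neither. In the first case $b$ lies in both kernels simply because $f_i(0)=0$.

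One correction: $\pi_\lambda$ is \emph{not} strict in the paper's sense. Take $n=\langle h_i,\lambda\rangle$ and $x=f_i^{n+1}b(\infty)\otimes t(\lambda)$. Then $\pi_\lambda(x)=f_i^{n+1}b(\lambda)=0$, while $\pi_\lambda(e_ix)=f_i^{n}b(\lambda)\neq0$. Kashiwara's theorem only gives $\pi_\lambda\circ f_i=f_i\circ\pi_\lambda$, together with $e_i$-commutation off the kernel. The same computation with $\Phi$ in place of $\pi_\lambda$ shows that ii), read literally with the convention $e_i(0)=0$, is incompatible with iv). So ii) can only sensibly mean that $\Phi$ is a morphism commuting with the $f_i$. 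Your proof only uses this weaker property, for both maps, so it goes through unchanged once you stop calling $\pi_\lambda$ strict.
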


\subsection{Geometric crystal operators}\label{Sect/GLScrystals}

In this section, we recall Geiss--Leclerc--Schr\"{o}er's construction of $B(C;\infty)$ for symmetrisable Cartan data using quiver representations with multiplicities \cite{GLS18a}, generalising \cite{KS97}. We also set up a geometric crystal construction for framed quivers with multiplicities, from which we will extract $B(C;\lambda)$ in \cref{Sect/selectCrystalComponents}.

Let $(Q,\bfm)$ be a quiver with multiplicities \emph{without loop arrows}. The datum of $(Q,\bfm)$, up to orientation of arrows, is equivalent to that of a symmetrisable Cartan matrix, via the following construction (see \cite{GLS17a}).

\begin{definition}
The symmetrisable Cartan matrix associated to $(Q,\bfm)$ is the square matrix $C = (c_{ij})_{i,j \in Q_0}$ with the following entries:
\[
c_{ij}=
\left\{
\begin{array}{ll}
2 & i=j \\
-\frac{m_j}{m_{ij}}\times\#\{a\in Q_1\ \vert\ \{s(a),t(a)\}=\{i,j\}\} & i\ne j
\end{array}
\right.
,
\]
for $i,j\in Q_0$. This matrix is symmetrisable with symmetriser $D=\mathrm{diag}(m_i,\ i\in Q_0)$. We let $\mathfrak{g} = \mathfrak{g}_{Q,\bfm}$ denote the associated symmetrisable Kac--Moody algebra.
\end{definition}

Note that, for $i,j\in Q_0$, we have:
\[
\rk_{k_{m_i}}\left(\bigoplus_{\{s(a),t(a)\}=\{i,j\}}k_a\right)=\frac{m_j}{m_{ij}}\times\#\{a\in Q_1\ \vert\ \{s(a),t(a)\}=\{i,j\}\}=-c_{ij}.
\]

Let us now recall Geiss--Leclerc--Schr\"{o}er's construction of the $\mathfrak{g}_{Q,\bfm}$-crystal $B(C;\infty)$. Elements of the crystal are given by crystal components of $\Lambda_{Q,\bfm}$ (see \cref{Sect/CrystalComponents}), whereas the raising and lowering operators are determined by Hecke correspondences.

\begin{definition}\label{def/GeissLeclercSchröerCrystal}
\cite[\S 3.7, \S 5.3, Thm.\ 5.4]{GLS18a}
Let $B(Q,\bfm;\infty)$ be the set $\mathrm{Irr}(\Lambda)$, endowed with the following $\mathfrak{g}_{Q,\bfm}$-crystal structure: for $\bfr\in\bbZ_{\geq0}^{Q_0}$, $i\in Q_0$ and $Z\in\mathrm{Irr}(\Lambda_\bfr)$, define:
\begin{align*}
\mathrm{wt}(Z) & := -\sum_{i\in Q_0}r_i\alpha_i, \\
\epsilon_i(Z) & := \rk \fac_i(M),
\end{align*}
for $M=M(x,y)$ and $(x,y)\in Z$ general. If $c:=\epsilon_i(Z)$, we define
\[
e_i(Z):=
\left\{
\begin{array}{ll}
0 & c=0 \\
\overline{q(p^{-1}(Z\cap\Lambda_{\bfr,c\bfalpha_i}))} & c>0,
\end{array}
\right.
\]
where
\[
\Lambda_{\bfr-\bfalpha_i,(c-1)\bfalpha_i}\overset{q}{\longleftarrow}
\Lambda_{\bfr-\bfalpha_i,\bfr;c\bfalpha_i}\overset{p}{\longrightarrow}\Lambda_{\bfr,c\bfalpha_i}
,
\]
and, by convention, the two leftmost terms are empty if $c=0$. By \cref{Rem/crystalOperatorsE/F} this data completely determines the crystal structure.
\end{definition}

\begin{notation}
Given $b\in B(Q,\bfm;\infty)$, we let $Z_b\subset\Lambda$ denote the corresponding irreducible component. Moreover, for $i\in Q_0$ and $c\geq0$, we will use the following notation:
\begin{align*}
& B(Q,\bfm;\infty)_{\bfr} := B(Q,\bfm;\infty)\left[-\sum_{i\in Q_0}r_i\alpha_i\right] , \\
& B(Q,\bfm;\infty)_{\bfr,c\bfalpha_i} :=B(Q,\bfm;\infty)\left[-\sum_{i\in Q_0}r_i\alpha_i\right]_{i,c}.
\end{align*}
\end{notation}

\begin{remark}
Note that Geiss--Leclerc--Schr\"{o}er construct the lowest-weight crystal associated to the \emph{upper} half of the quantised enveloping algebra of $\mathfrak{g}$ \cite[\S 5]{GLS18a}. One obtains instead the highest-weight crystal $B(C;\infty)$ as introduced in \cref{Exmp/B(lambda)} using the \emph{lower} half of the quantised enveloping algebra by exchanging raising and lowering operators.
\end{remark}

The main result of \cite{GLS18a} is the following.

\begin{theorem}\label{Thm/Geiss-Leclerc-SchröerCrystal}\emph{\cite[Thm.\ 5.4]{GLS18a}}
Let $(Q,\bfm)$ be a quiver with multiplicities without loops and let $C$ be the associated symmetrisable Cartan matrix and $\mathfrak{g}_{Q,\bfm}$ be the corresponding Kac--Moody algebra. Then there is an isomorphism of $\mathfrak{g}_{Q,\bfm}$-crystals:
\[
B(Q,\bfm;\infty)\simeq B(C;\infty)
.
\]
\end{theorem}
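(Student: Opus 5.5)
The plan is to verify the abstract characterisation of $B(C;\infty)$ due to Kashiwara--Saito \cite{KS97}, as Kashiwara--Saito do in the symmetric case. This criterion says the following. Let $B$ be a crystal with a unique element $b_0$ of weight $0$, with $\mathrm{wt}(B)\subset-\sum_i\bbZ_{\geq0}\alpha_i$, with $\epsilon_i(b)=\max\{k\mid e_i^k(b)\ne0\}$, and with an involution $*$. Suppose that for every $i$ the map
\[
\Psi_i\colon B\rightarrow B\otimes B_i,\qquad b\mapsto (e_i^*)^{c}(b)\otimes \widetilde{f}_i^{\,c}b_i,\qquad c=\epsilon_i^*(b),
\]
is a strict embedding of crystals, where $e_i^*:=*\circ e_i\circ *$ and $\epsilon_i^*:=\epsilon_i\circ *$. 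Then $B\simeq B(C;\infty)$. So I would take $B=B(Q,\bfm;\infty)=\mathrm{Irr}(\Lambda)$ with the structure of \cref{def/GeissLeclercSchröerCrystal} and check these conditions one by one.

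\emph{Crystal axioms and normalisation.} The weight condition is built into the definition. Uniqueness of the weight-$0$ element holds because $\Lambda_{\mathbf 0}=\mathrm{pt}$. Well-definedness of $e_i$ and $f_i$ follows from \cref{Rem/restrictionHeckeCorrespondence}: $p$ and $q$ are smooth and surjective with connected fibres, so they induce mutually inverse bijections between $\mathrm{Irr}(\Lambda_{\bfr,c\bfalpha_i})$ and $\mathrm{Irr}(\Lambda_{\bfr-\bfalpha_i,(c-1)\bfalpha_i})$. Combined with \cref{Prop/dimCrystalComponents}, which says crystal components are exactly the top-dimensional ones, this gives axiom \ref{C4} and the identity $\epsilon_i(e_i Z)=\epsilon_i(Z)-1$. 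Iterating gives $\epsilon_i(Z)=\max\{k\mid e_i^kZ\ne0\}$. The identities for $\phi_i$ hold by definition via \ref{C1}.

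\emph{The involution.} I would define $*$ by the duality $M\mapsto \mathrm{Hom}_{k}(M,k)$, with the module structures twisted through $k_{m_i}\simeq \mathrm{Hom}_k(k_{m_i},k)$ and arrows transposed. Concretely, this is the isomorphism $\mathrm{T}^*R(Q,\bfm;\bfr)\simeq \mathrm{T}^*R(Q^{\mathrm{op}},\bfm;\bfr)$, adjusted by signs so that it preserves $\mu^{-1}(0)$. It swaps $\sub_i$ with the dual of $\fac_i$ and preserves E-filtrations, hence preserves crystal representations and so permutes $\mathrm{Irr}(\Lambda_\bfr)$. It then follows that $\epsilon_i^*(Z)$ is the generic rank of $\sub_i(M)$, and that $e_i^*$ is given by the Hecke correspondence removing a copy of $E_i$ from the socle instead of the top.

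\emph{Strict embedding, the main obstacle.} One must show that $\Psi_i$ commutes with $e_j$ and $f_j$ according to the tensor product rule. For $j\ne i$ this reduces to showing that the operations "remove $E_j$ from the top" and "remove $E_i$ from the socle" commute on generic points. I would prove this with a double Hecke correspondence parametrising flags $M''\subset M'\subset M$ with $M/M'\simeq E_j$ and $M''\simeq E_i$, and show that both composites are smooth with connected fibres.

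For $j=i$, the key is a generic identity of the form
\[
\epsilon_i(Z)=\max\bigl(\epsilon_i(Z^{\flat}),\ \epsilon_i^*(Z)-\langle h_i,\mathrm{wt}(Z)\rangle\bigr),\qquad Z^{\flat}:=(e_i^*)^{\epsilon_i^*(Z)}(Z),
\]
together with the corresponding rule deciding whether $f_i$ acts on the left or the right tensor factor. To get it, I would use the moment map relation $\widetilde{M}_{i,\mathrm{in}}\circ M_{i,\mathrm{out}}=0$ from \cref{Lem/momentMapEqn}, read as a complex
\[
\bigoplus_{a\colon j\rightarrow i}M_j\otimes k_a\;\longrightarrow\; M_i\;\longrightarrow\;\bigoplus_{a\colon i\rightarrow j}k_a\otimes M_j .
\]
For general points of crystal components its terms and homology are free. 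Computing ranks then gives $\rk\fac_i-\rk\sub_i=\rk M_i-\rk(\text{image})$, which is expressed through $\langle h_i,\mathrm{wt}\rangle=-2r_i+\sum_{j\ne i}(-c_{ij})r_j$; here \cref{Prop/dimCrystalComponents} supplies the genericity needed.

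The delicate point is freeness over $k_{m_i}$. Generic points of $Z\cap\Lambda_{\bfr,c\bfalpha_i}$ need not be generic in $Z$; this is the phenomenon of \cref{Exmp/cokernelLociCrystalComponents}. So every dimension count must be carried out in the top-dimensional loci $\Lambda_{\bfr,c\bfalpha_i}$, using \cref{Rem/restrictionHeckeCorrespondence}, rather than in $Z\cap\mathrm{nil}_{\bfr,c\bfalpha_i}$. I expect this bookkeeping to be the hardest part of the argument.
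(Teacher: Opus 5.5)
The paper does not prove this statement. It is quoted from \cite[Thm.\ 5.4]{GLS18a}, whose proof adapts Kashiwara--Saito's argument: it verifies their characterisation of $B(C;\infty)$ using the involution $*$ induced by duality and the embeddings $\Psi_i$, with \cref{Prop/dimCrystalComponents} controlling the top-dimensional loci. Your strategy is therefore the right one. However, the step you single out as the key one is misstated, and the tool you propose for it is not enough.

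\emph{The key identity has a sign error.} For $\Psi_i(Z)=Z^\flat\otimes\widetilde{f}_i^{\,c}b_i$ with $c=\epsilon_i^*(Z)$, the tensor product rule reads
\[
\epsilon_i(Z)=\max\bigl(\epsilon_i(Z^\flat),\,c-\langle h_i,\mathrm{wt}(Z^\flat)\rangle\bigr)=\max\bigl(\epsilon_i(Z^\flat),\,-\epsilon_i^*(Z)-\langle h_i,\mathrm{wt}(Z)\rangle\bigr).
\]
Your version already fails for one vertex without arrows. There $\epsilon_i(Z)=\epsilon_i^*(Z)=r_i$, while your right-hand side is $3r_i$. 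Similarly, the complex gives $\rk\fac_i(M)+\rk\sub_i(M)+\langle h_i,\mathrm{wt}\rangle=\rk\bigl(\Ker\widetilde{M}_{i,\mathrm{in}}/\Ima M_{i,\mathrm{out}}\bigr)\geq0$. This is a sum, not a difference.

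\emph{A rank count at one general point of $Z$ only gives this inequality.} The max-formula, and the rule deciding which tensor factor $f_i$ acts on, need a comparison between general points of $Z$ and of $Z^\flat$. The missing ingredient is a generic-extension description.
\begin{itemize}
\item A general $M\in Z$ is an extension of $M^\flat:=C_i(M)=M/\sub_i(M)$ by $E_i^{\oplus c}$. Here $M^\flat$ is general in $Z^\flat$ and $\sub_i(M^\flat)=0$.
\item Such extensions satisfying the moment map equations correspond to $k_{m_i}$-linear maps $\varphi$ from the source of $\widetilde{M}^\flat_{i,\mathrm{in}}$ to $k_{m_i}^{\oplus c}$ with $\varphi\circ M^\flat_{i,\mathrm{out}}=0$. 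The equations at vertices $j\neq i$ hold automatically.
\item Then $\fac_i(M)=\Coker(\widetilde{M}^\flat_{i,\mathrm{in}},\varphi)$. For general $\varphi$ its rank is $\rk\fac_i(M^\flat)+c-\min(c,h)$, where $h=\rk\fac_i(M^\flat)+\langle h_i,\mathrm{wt}(Z^\flat)\rangle$ is the rank of the homology. The homology is free because $\fac_i(M^\flat)$ is free and $k_{m_i}$ is self-injective. This computation is exactly the max-formula.
\item The same parametrisation, combined with its dual under $*$, gives the left/right rule: $f_i$ acts on $Z^\flat$ if and only if $h>c$. It also gives the smoothness and connectedness of the socle-side Hecke correspondence that you rely on.
\end{itemize}

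\emph{Your statement of the criterion omits one of Kashiwara--Saito's conditions.} They require that each $b\neq b_0$ have $\epsilon_i^*(b)>0$ for some $i$. This cannot be dropped outside finite type: for $C=\bigl(\begin{smallmatrix}2&-3\\-3&2\end{smallmatrix}\bigr)$, adjoining to $B(\infty)$ an isolated element of weight $-\alpha_1-\alpha_2$ with all $\epsilon_i=\epsilon_i^*=0$ satisfies every condition you list. In your setting the condition holds easily, because the last step of an E-filtration of $M\neq0$ lies in some $\sub_i(M)$.
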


We finish by defining a $\mathfrak{g}$-crystal for framed quivers associated to $(Q,\bfm)$. This will be used in \cref{Sect/selectCrystalComponents} in our construction of $B(C;\lambda)$.

\begin{definition}
For a quiver with multiplicities $(Q,\bfm)$ and framing vector $\bff\in\bbZ_{\geq0}^{Q_0}$, let
\[
B(Q,\bff,\bfm;\infty):=\bigsqcup_{\bfr\in\bbZ_{\geq0}^{Q_0}}B(Q_\bff,\bfm;\infty)_{\widehat{\bfr}}
\]
with the crystal structure induced by the crystal operators $e_i,f_i$ for $ i\in Q_0$ of $B(Q_\bff,\bfm;\infty)$. 
\end{definition}

\begin{notation}
In what follows, given $\bfr\in\bbZ_{\geq0}^{Q_0}$, we will write for short:
\[
B(Q,\bff,\bfm;\infty)_\bfr:=B(Q_\bff,\bfm;\infty)_{\widehat{\bfr}}.
\]
We denote by $b_{\bff,0}$ the unique element of $B(Q,\bff,\widehat{\bfm};\infty)_{\hat{\mathbf{0}}}$.
\end{notation}

\subsection{Selecting crystal components}\label{Sect/selectCrystalComponents}

In this section, we adapt Saito's construction of $B(C;\lambda)$ from the symmetric case to the symmetrisable case \cite{Sai02}. We proceed in a very similar way, by relating a certain subset of crystal components in $\Lambda_{Q_\bff,\widehat{\bfm}}$ to crystal components of $\Lambda_{Q,\bfm}$. These components will be selected by the stability condition defined in \cref{Sect/constructionNilpQuiverVar}. Here, we extract from \cite{Sai02} the abstract requirements that stable crystal components should meet for Saito's argument to work. We will check that these properties are satisfied in \cref{Sect/GeometricHWCrystals}.

From now on, we fix a quiver with multiplicities $(Q,\bfm)$ and a framing vector $\bff\in\bbZ_{\geq0}^{Q_0}$. Let $\lambda:=\sum_{i\in Q_0}f_i\varpi_i$ denote the corresponding weight. Below we will describe which crystal components of $\Lambda_{Q_\bff,\widehat{\bfm}}$ are relevant for the geometric construction of the crystal $B(C;\lambda)$, but first we show that any subset has an induced crystal structure.

\begin{proposition}\label{Prop/crystalRestriction}
Any subset $B\subset B(Q,\bff,\bfm;\infty)\otimes T(\lambda)$ has an induced crystal structure defined by restricting the functions $\mathrm{wt},\phi_i$ and $\epsilon_i$ from $B(Q,\bff,\bfm;\infty)\otimes T(\lambda)$ to $B$ and defining restricted raising and lowering operators as follows
\[
\begin{array}{rrcl}
e_i\vert_B\colon  & B & \rightarrow & B\sqcup\{0\} \\
& b & \mapsto & 
\left\{
\begin{array}{ll}
e_i(b) & \text{if } e_i(b)\in B, \\
0 & \text{else.}
\end{array}
\right.
\end{array}
\begin{array}{rrcl}
f_i\vert_B \colon & B & \rightarrow & B\sqcup\{0\} \\
& b & \mapsto &
\left\{
\begin{array}{ll}
f_i(b) & \text{if } f_i(b)\in B, \\
0 & \text{else.}
\end{array}
\right.
\end{array}
\]

\end{proposition}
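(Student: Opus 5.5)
The plan is to verify the crystal axioms \ref{C1}--\ref{C5} directly for the restricted data, using that they already hold in the ambient crystal $\widetilde{B}:=B(Q,\bff,\bfm;\infty)\otimes T(\lambda)$. That $\widetilde{B}$ is a crystal follows from \cref{Thm/Geiss-Leclerc-SchröerCrystal} applied to $(Q_\bff,\widehat{\bfm})$ (restricted to the vertices of $Q_0$ and to rank vectors with $\widehat{r}_\infty=1$, which are stable under $e_i,f_i$ for $i\in Q_0$) together with the definition of the tensor product with $T(\lambda)$. Concretely, \ref{C1} is preserved under the shift $\mathrm{wt}'=\mathrm{wt}+\lambda$, $\phi'_i=\phi_i+\langle h_i,\lambda\rangle$, $\epsilon'_i=\epsilon_i$, and \ref{C2}--\ref{C5} involve the operators only through identities that are unaffected by the shift.

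\ref{C1} for $B$ is immediate, since $\mathrm{wt},\epsilon_i,\phi_i$ on $B$ are literally the restrictions of those on $\widetilde{B}$ and \ref{C1} is a pointwise identity. For \ref{C2}, suppose $b\in B$ with $e_i\vert_B(b)\in B$. Then by definition $e_i\vert_B(b)=e_i(b)\in B$, so the required identities for $\mathrm{wt},\epsilon_i,\phi_i$ at $e_i(b)$ are exactly those of \ref{C2} in $\widetilde{B}$. \ref{C3} is handled the same way.

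For \ref{C4}, take $b,b'\in B$. We have $b=e_i\vert_B(b')$ if and only if $b=e_i(b')$, because $b\in B$ already forces the first case of the definition. By \ref{C4} in $\widetilde{B}$ this holds if and only if $b'=f_i(b)$, and since $b'\in B$ this is equivalent to $b'=f_i\vert_B(b)$. For \ref{C5}, if $\epsilon_i(b)=-\infty$ then $e_i(b)=f_i(b)=0$ in $\widetilde{B}$. Hence neither lies in $B$, and the restricted operators also return $0$.

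The only point needing any care, and the one I would state explicitly, is that \ref{C4} in the restricted crystal holds for pairs within $B$ precisely because membership in $B$ is symmetric on both sides of the equivalence. No step is a genuine obstacle. The whole statement is a formal consequence of the axioms being either pointwise identities or statements about pairs of elements, each of which restricts to subsets.
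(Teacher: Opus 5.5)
Your proposal is correct and follows the paper's approach: the paper's proof just says the crystal axioms can be checked immediately. Your axiom-by-axiom verification of (C1)--(C5), including the observation that for $b,b'\in B$ the restricted and ambient versions of (C4) coincide, is exactly that check written out.
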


\begin{proof}
The defining axioms of crystals can be checked immediately.
\end{proof}

\begin{notation}\label{notation for weight shift}
From now on, we write $e_i,f_i$ for $e_i\vert_B,f_i\vert_B$ for $i\in Q_0$. Moreover, for $i\in Q_0$ and $c\geq0$, we will use the following notation:
\begin{align*}
& B_{\bfr} := B\left[\lambda-\sum_{i\in Q_0}r_i\alpha_i\right] , \\
& B_{\bfr,c\bfalpha_i} :=B\left[\lambda-\sum_{i\in Q_0}r_i\alpha_i\right]_{i,c}.
\end{align*}
Note that for $b \in B_\bfr$, we have $\mathrm{wt}(b) = \lambda - \sum_{i \in Q_0} r_i \alpha_i$.
\end{notation}

Under the following conditions, $B$ is a highest-weight crystal of highest weight $\lambda$, which allows us to use \cref{Lem/HWcrystalsF-operators}. We will denote by $b_\lambda\in B$ the highest-weight element.

\begin{proposition}\label{Prop/HWcrystalRestriction}
Let $B$ be a subset of $B(Q,\bff,\bfm;\infty)$ with its induced crystal structure as in \cref{Prop/crystalRestriction}. If:
\begin{enumerate}[label=(\roman*)]
\item $b_{\bff,0}\in B$,
\item for every $b\in B\setminus\{b_{\bff,0}\}$, there exists $i\in Q_0$ such that $\epsilon_i(b)>0$,
\item\label{Item3/Prop/HWcrystalRestriction} for all $i\in Q_0$ and $c\in\bbZ_{\geq0}$, for all $\bfr\in\bbZ_{\geq0}^{Q_0}$, whenever $B_{\bfr,c\bfalpha_i}\ne\emptyset$, we have:
\[
e_i^c(B_{\bfr,c\bfalpha_i})=B_{\bfr-c\bfalpha_i,\mathbf{0}},
\]
\end{enumerate}
then $B$ is a highest-weight crystal of highest weight $\lambda$.
\end{proposition}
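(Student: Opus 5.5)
We verify the two defining conditions of a highest-weight crystal directly, with $b_{\bff,0}$ as the distinguished element. Throughout, $B$ carries the structure of \cref{Prop/crystalRestriction}: $e_i,f_i$ are the restrictions, and $\mathrm{wt},\epsilon_i,\phi_i$ are inherited from $B(Q,\bff,\bfm;\infty)\otimes T(\lambda)$. First, $b_{\bff,0}$ has weight $\lambda$, since it lies in $B_{\mathbf{0}}$ and the $T(\lambda)$-twist shifts weights by $\lambda$. Moreover, since $\epsilon_i$ is a rank, all values $\epsilon_i(b)$ are non-negative integers.

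\textbf{Second condition: $\epsilon_i(b)=\max\{k\geq0\mid e_i^k(b)\neq0\}$ for the restricted operators.} Fix $b\in B_{\bfr,c\bfalpha_i}$. By (iii), $e_i^c$ maps $B_{\bfr,c\bfalpha_i}$ into $B_{\bfr-c\bfalpha_i,\mathbf 0}$. Here I take (iii) to mean that $e_i^c(b)\in B$ is nonzero for each such $b$. The iterates $e_i(b),\dots,e_i^c(b)$ are then nonzero elements of $B$, because a restricted operator sends a vanishing element to $0$, and $e_i(0)=0$. The image $e_i^c(b)$ has $\epsilon_i=0$; indeed (C2) gives $\epsilon_i(e_i^c(b))=c-c=0$, consistent with it lying in $B_{\bfr-c\bfalpha_i,\mathbf0}$. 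In the ambient GLS crystal, $e_i$ of an element with $\epsilon_i=0$ is $0$ by \cref{def/GeissLeclercSchröerCrystal}, so its restriction is $0$ as well. Hence $\max\{k\mid e_i^k(b)\neq0\}=c=\epsilon_i(b)$. The one delicate point is that the intermediate elements $e_i^k(b)$ for $0<k<c$ must lie in $B$. This is exactly what "$e_i^c(B_{\bfr,c\bfalpha_i})$", computed with restricted operators, guarantees once we know it does not contain $0$. The hypothesis equates it with a set of elements of $B$, and a nonempty set of elements cannot be $\{0\}$. If one worries that the image might contain $0$ alongside genuine elements, we argue as follows. The case $c=0$ is trivial. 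Otherwise, if $e_i^k(b)\notin B$ for some $k\le c$, then $e_i^c(b)=0$, which is not an element of $B_{\bfr-c\bfalpha_i,\mathbf0}$, contradicting the equality of sets (interpreting the statement as an equality of subsets of $B$).

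\textbf{First condition: every $b$ is obtained from $b_{\bff,0}$ by lowering operators.} We argue by induction on $|\bfr|=\sum r_j$ for $b\in B_\bfr$. If $|\bfr|=0$, then $b=b_{\bff,0}$, since the weight space $B(Q,\bff,\bfm;\infty)_{\mathbf0}$ is the single element $b_{\bff,0}$. Otherwise $b\neq b_{\bff,0}$, so by (ii) there is $i$ with $c:=\epsilon_i(b)>0$. By the previous paragraph, $b':=e_i(b)\in B$ is nonzero and lies in $B_{\bfr-\bfalpha_i}$. By (C4) in the ambient crystal, $f_i(b')=b\in B$, so the restricted operator also gives $f_i(b')=b$. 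By induction, $b'=f_{i_1}\circ\cdots\circ f_{i_s}(b_{\bff,0})$, so $b=f_i f_{i_1}\cdots f_{i_s}(b_{\bff,0})$.

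Together these show that $B$ is highest-weight with highest-weight element $b_\lambda=b_{\bff,0}$ of weight $\lambda$. The only genuine obstacle is the bookkeeping about restricted operators in the second step, that is, ensuring the intermediate iterates remain in $B$. I expect to resolve this purely from the set-theoretic reading of (iii), as above.
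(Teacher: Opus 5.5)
Your proof is correct and follows the paper's argument for the generation condition. Both argue by induction on $|\bfr|$, use (ii) to find $i$ with $\epsilon_i(b)>0$, and use (iii) to see that $e_i(b)\in B$, so that $b=f_i(e_i(b))$. You also explicitly verify $\epsilon_i(b)=\max\{k\geq0\mid e_i^k(b)\neq0\}$ for the restricted operators, which the paper's proof leaves implicit; it follows from the set-theoretic reading of (iii) exactly as you argue.
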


\begin{proof}
We prove the claim for $b\in B_\bfr$ by induction on $\vert\bfr\vert:=\sum_{i\in Q_0}r_i$. The case where $\bfr=\mathbf{0}$ is satisfied by assumption (i). Consider now $b\in B_\bfr$, where $\bfr>\mathbf{0}$. By assumption (ii), there exists $i\in Q_0$ such that $\epsilon_i(b)>0$, thus, by assumption (iii), there exists $b'\in B_{\bfr-\bfalpha_i}$ such that $b=f_i(b')$. The claim then follows from the induction hypothesis, as $\vert\bfr-\bfalpha_i\vert<\vert\bfr\vert$.
\end{proof}

The following proposition closely follows Saito's geometric construction of $B(C;\lambda)$ in the symmetric setting \cite{Sai02}.

\begin{proposition}\label{Prop/geometricCriteriaB(lambda)}
Suppose that the following hold:
\begin{enumerate}[label=(\arabic*)]
\item\label{Item1/Prop/geometricCriteriaB(lambda)} $b_{\bff,0}\in B$;
\item\label{Item2/Prop/geometricCriteriaB(lambda)} for every $b\in B\setminus\{b_{\bff,0}\}$, there exists $i\in Q_0$ such that $\epsilon_i(b)>0$;
\item\label{Item3/Prop/geometricCriteriaB(lambda)} for every $b\in B$, for all $(x,y,s,t)\in Z_b$, we have $s=0$;
\item\label{Item4/Prop/geometricCriteriaB(lambda)} for all $i\in Q_0$ and $c\in\bbZ_{\geq0}$ such that $B_{\bfr,c\bfalpha_i}\ne\emptyset$, for all $l\in\bbZ$, we have:
\[
B_{\bfr+l\bfalpha_i,(c+l)\bfalpha_i}\ne\emptyset
\Leftrightarrow
-c\leq l\leq c+\left\langle h_i,\lambda-\sum_{j\in Q_0}r_j\alpha_j\right\rangle
;
\]
when this is the case, we further have $B_{\bfr+l\bfalpha_i,(c+l)\bfalpha_i}=f_i^l(B_{\bfr,c\bfalpha_i})$ if $l\geq0$ (resp.\ $B_{\bfr+l\bfalpha_i,(c+l)\bfalpha_i}=e_i^{-l}(B_{\bfr,c\bfalpha_i})$ if $l<0$).
\end{enumerate}
Then we have an isomorphism of crystals $B\simeq B(\lambda;C)$.
\end{proposition}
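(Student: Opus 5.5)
We will verify the hypotheses of \cref{Prop/SaitoCriterion} for $B$, taking $b_\lambda:=b_{\bff,0}$. Here $B\subset B(Q,\bff,\bfm;\infty)\otimes T(\lambda)$ carries the induced structure of \cref{Prop/crystalRestriction}.

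\textbf{Highest-weight structure.} Conditions \ref{Item1/Prop/geometricCriteriaB(lambda)} and \ref{Item2/Prop/geometricCriteriaB(lambda)} are exactly (i) and (ii) of \cref{Prop/HWcrystalRestriction}. Condition (iii) there follows from \ref{Item4/Prop/geometricCriteriaB(lambda)} by taking $l=-c$. Hence $B$ is a highest-weight crystal with highest-weight element $b_{\bff,0}$ of weight $\lambda$. In particular every element has weight $\lambda-\sum_i r_i\alpha_i$ with $\bfr\ge 0$, and only $\bfr=\mathbf 0$ gives weight $\lambda$. Since $\Lambda_{\widehat{\mathbf 0}}$ is a point, $B_{\mathbf 0}=\{b_{\bff,0}\}$, which gives Saito's condition i). Condition iv) also follows from \ref{Item4/Prop/geometricCriteriaB(lambda)}:
\begin{itemize}
\item for $b\in B_{\bfr,c\bfalpha_i}$, the largest $k$ with $e_i^k(b)\neq0$ is $c=\epsilon_i(b)$;
\item the largest $k$ with $f_i^k(b)\ne0$ is $c+\langle h_i,\lambda-\sum_j r_j\alpha_j\rangle$, which equals $\phi_i(b)$ by \ref{C1} in $B(Q,\bff,\bfm;\infty)\otimes T(\lambda)$.
\end{itemize}

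\textbf{The morphism $\Phi$.} We would map $B(Q,\bfm;\infty)\otimes T(\lambda)\simeq B(C;\infty)\otimes T(\lambda)$ (via \cref{Thm/Geiss-Leclerc-SchröerCrystal}) to $B\sqcup\{0\}$. First we construct an injective map in the opposite direction, $\iota\colon B\to B(Q,\bfm;\infty)$. By \ref{Item3/Prop/geometricCriteriaB(lambda)}, a general point of $Z_b$ has $s=0$. Forgetting $t$ then gives a representation of $(\overline Q,\bfm)$, and we set $\iota(b)$ to be the closure of the image of $Z_b$ under this forgetful map, restricted to $s=0$.

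To see that $\iota(b)$ is a crystal component, note that with $s=0$ the framing vertex contributes nothing to $\fac_i$ for $i\in Q_0$. Hence $\fac_i$ is the same for the framed and unframed representations, so crystal representations map to crystal representations. The fibres over the unframed locus are spaces of $t$, which gives the right dimension, so \cref{Prop/dimCrystalComponents} applies.

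The same observation shows that $\iota$ preserves $\epsilon_i$, and (after the shift by $\lambda$) the weight. The Hecke correspondences at vertices $i\in Q_0$ commute with forgetting $t$, because $q,p$ only modify vertex $i$ and $t_i$ is restricted along $\varphi$. Therefore $\iota$ intertwines $e_i$, and $f_i$ whenever the target lies in $B$.

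Injectivity of $\iota$ should follow by induction on $\bfr$ using \ref{Item4/Prop/geometricCriteriaB(lambda)} with $l=-c$: distinct $b$ are distinguished after applying some $e_i^{\epsilon_i(b)}$, and $\iota$ commutes with these operators. We then define
\[
\Phi(b'\otimes t(\lambda)):=\begin{cases}\iota^{-1}(b') & \text{if } b'\in\iota(B),\\ 0 & \text{otherwise.}\end{cases}
\]
With this definition, $\Phi(b(\infty)\otimes t(\lambda))=b_{\bff,0}$, and Saito's condition iii) holds by construction.

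\textbf{Strictness and surjectivity.} $\Phi$ is surjective because $\iota$ is injective. The remaining task is strictness: we must show that $\iota(B)\cup\{0\}$ is stable under the $e_i$ and $f_i$ up to sending elements to $0$ in the right places, and that $\Phi$ commutes with $e_i,f_i$. The key point is that $f_i(\iota(b))\in\iota(B)$ exactly when $f_i(b)\ne0$ in $B$, and that if $b'\notin\iota(B)$ then $f_i(b')\notin\iota(B)$ and $e_i(b')\notin\iota(B)\setminus\{\text{images of }e_i\}$.

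I expect this to be the main obstacle. Condition \ref{Item4/Prop/geometricCriteriaB(lambda)} controls only the string through elements of $B$. I would follow Saito's argument: every element of $B(C;\infty)$ is obtained from $b(\infty)$ by lowering operators, and one inducts on $|\bfr|$. The $e_i$-direction is handled by \ref{Item4/Prop/geometricCriteriaB(lambda)} applied to the string of a given $b$. For the $f_i$-direction, suppose $f_i(\iota(b))$ lies in $\iota(B)$. Then its $e_i$ lies in $\iota(B)$, so by injectivity of $\iota$ it equals $f_i(b)$. Finally, \cref{Prop/SaitoCriterion} gives $B\simeq B(C;\lambda)$.
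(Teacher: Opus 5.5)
Your proof has the same architecture as the paper's:
\begin{enumerate}
\item the highest-weight structure comes from \cref{Prop/HWcrystalRestriction}, with (4) at $l=-c$ giving its condition (iii);
\item you build $\iota\colon B\hookrightarrow B(Q,\bfm;\infty)$ by forgetting the framing where $s=0$, and set $\Phi:=\iota^{-1}$ on $\iota(B)$ and $0$ elsewhere;
\item $\iota$ is compatible with Hecke correspondences;
\item you conclude with \cref{Prop/SaitoCriterion}, reading the $\phi_i$-part of Saito's condition iv) off (4) exactly as the paper does.
\end{enumerate}
Deriving the $\epsilon_i$-part from (4) directly, rather than inheriting it through $\Phi$, is fine.

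The one step where you genuinely diverge is injectivity of $\iota$, and the two routes compare as follows.
\begin{itemize}
\item \emph{Paper.} Injectivity comes for free from geometry. The map $\tilde{\Lambda}_{Q_\bff,\widehat{\bfm}}\to\Lambda_{Q,\bfm}$ is a trivial affine fibration in the $t$-direction. Hence its top-dimensional components are exactly the products $Z'\times\prod_i\Hom_{k_{m_i}}(k_{m_i}^{\oplus r_i},k_{m_i}^{\oplus f_i})$, with $Z'$ a crystal component. By (3), $Z_b$ generically lies in $\tilde{\Lambda}_{Q_\bff,\widehat{\bfm}}$ and is top-dimensional there, so it is such a product and is recovered from its image. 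This uses only (3), and it shows at the same time that $\iota(b)$ is a crystal component.
\item \emph{Yours.} You induct on $|\bfr|$: peel off $e_i^{\epsilon_i(b)}$ using (2) and (4), together with preservation of $\epsilon_i$ and $\iota\circ e_i=e_i\circ\iota$, then conclude by \ref{C4}. This is valid and purely crystal-theoretic. But it makes injectivity depend on the Hecke-compatibility of $\iota$ and on (2) and (4), which is more than the situation requires.
\end{itemize}

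There are also two smaller points.

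First, your reason for $\iota(b)$ being crystal — ``$\fac_i$ is unchanged, so crystal representations map to crystal representations'' — is not right as stated. Being crystal also requires $\sub_i$ to be free (recursively). But $\sub_i=\Ker(M_{i,\mathrm{out}})$ involves $t_i$: it vanishes for stable framed representations by \cref{Lem/stabilityCriterion}, but need not even be free after forgetting $t$. What actually works is your dimension count:
\begin{itemize}
\item the restriction to $Q_0$ of an E-filtered representation is E-filtered;
\item with $s=0$ the moment map equations reduce to the unframed ones;
\item the fibres of forgetting $t$ have dimension $\sum_i m_ir_if_i$.
\end{itemize}
Together these give $\dim\overline{\pi(Z_b)}\geq\dim R(Q,\bfm;\bfr)$, and \cref{Prop/dimCrystalComponents} applies.

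Second, your clause about $e_i(b')$ for $b'\notin\iota(B)$ is garbled. The clean formulation is the paper's (\cref{Lem/forgetfulCrystalMap}): prove $\Phi\circ f_i=f_i\circ\Phi$ on all of $B(C;\infty)\otimes T(\lambda)$, splitting into cases by whether $\Phi(b)$ and $\Phi(f_i(b))$ vanish. This uses exactly your two key facts:
\begin{itemize}
\item if $f_i(b')\in\iota(B)$, then $b'=e_i(f_i(b'))\in\iota(B)$ by (4);
\item if $f_i(\iota(\beta))\in\iota(B)$, then injectivity forces it to equal $\iota(f_i(\beta))$.
\end{itemize}
The $e_i$-compatibility on elements with $\Phi(b)\neq0$ then follows from \ref{C4}.
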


\begin{proof}
We check the criteria in \cref{Prop/SaitoCriterion}. Note that the first two assumptions coincide with those of \cref{Prop/HWcrystalRestriction}, and Assumption \ref{Item4/Prop/geometricCriteriaB(lambda)} implies Assumption \ref{Item3/Prop/HWcrystalRestriction} of \cref{Prop/HWcrystalRestriction}, so $B$ is a highest-weight crystal of highest weight $\lambda$.

Let us now construct a strict morphism of crystals
\[
\Phi: B(C;\infty)\otimes T(\lambda)\sqcup\{0\}\rightarrow B\sqcup\{0\}
\]
such that: i) $\Phi(b(\infty)\otimes t(\lambda))=b_\lambda$, ii) $\mathrm{Im}(\Phi)=B\sqcup\{0\}$ and iii) $\Phi$ induces a bijection:
\[
\{b\in B(C;\infty)\otimes T(\lambda)\ \vert\ \Phi(b)\ne0\}\overset{\sim}{\rightarrow} B.
\]
We first construct an injection of sets $\iota:B\hookrightarrow B(Q,\bfm;\infty)$. Consider the subscheme:
\begin{equation}\label{Def/Lambdatilde}    
\tilde{\Lambda}_{Q_\bff,\widehat{\bfm}}=\{ (x,y,s,t)\in R(\overline{Q_\bff},\widehat{\bfm})\ \vert\ (x,y)\in\Lambda_{Q,\bfm}\text{ and } s=0\}
\subset
\Lambda_{Q_\bff,\widehat{\bfm}}
.
\end{equation}
By Assumption  \ref{Item3/Prop/geometricCriteriaB(lambda)}, for every $b\in B$, there is an open subset $U_b\subset Z_b$ which is contained in $\tilde{\Lambda}_{Q_\bff,\widehat{\bfm}}$, so the closure of $U_b$ in $\tilde{\Lambda}_{Q_\bff,\widehat{\bfm}}$ is an irreducible component. Moreover, the forgetful map $\tilde{\Lambda}_{Q_\bff,\widehat{\bfm}}\rightarrow\Lambda_{Q,\bfm}$ (forgetting framing maps) is a trivial affine fibration of (local) dimension $\frac{1}{2}(\dim R(\overline{Q_\bff},\widehat{\bfm})-\dim R(\overline{Q},\bfm))$, so the set of top-dimensional irreducible components of $\tilde{\Lambda}_{Q_\bff,\widehat{\bfm}}$ is in bijection with $B(Q,\bfm;\infty)$. We thus obtain the required injection $\iota:B\hookrightarrow B(Q,\bfm;\infty)$.

We now define $\Phi$. Consider the following map of sets:
\[
\begin{array}{rcl}
B(Q,\bfm;\infty)\otimes T(\lambda) & \rightarrow & B\sqcup\{0\} \\
b\otimes t(\lambda) & \mapsto &
\left\{
\begin{array}{ll}
\iota^{-1}(b) & \text{if } b\in\mathrm{Im}(\iota), \\
0 & \text{else.}
\end{array}
\right.
\end{array}
\]
We define $\Phi$ to be the composition of the above map with the isomorphism of crystals $B(C;\infty)\otimes T(\lambda)\simeq B(Q,\bfm;\infty)\otimes T(\lambda)$ (see \cref{Thm/Geiss-Leclerc-SchröerCrystal}).

By \cref{Lem/forgetfulCrystalMap} below, $\Phi$ is a strict morphism of crystals. We further claim that $\Phi$ satisfies i), ii) and iii). Part i) of the claim holds by construction, as $b_0$ (resp.\ $b_\lambda$) corresponds to the unique irreducible component of $\Lambda_{Q,\bfm;\mathbf{0}}$ (resp.\ $\Lambda_{Q_\bff,\widehat{\bfm};\mathbf{0}}$). Parts ii) and iii) hold by construction and because $\iota$ provides an inverse.

Finally, let us check that, for all $i\in Q_0$ and $b\in B$, we have:
\begin{align*}
\phi_i(b)=\max\{ k\geq0\ \vert\ f_i^k(b)\ne0\} , \\
\epsilon_i(b)=\max\{ k\geq0\ \vert\ e_i^k(b)\ne0\} .
\end{align*}
By definition of $\Phi$ and since it is a strict crystal morphism, the above property for $\epsilon_i$ ($i\in Q_0$) is inherited from $B(C;\infty)\otimes T(\lambda)$, see \cref{Exmp/B(infty)}. Let us now check it for $\phi_i$ ($i\in Q_0$). Firstly, Assumption  \ref{Item4/Prop/geometricCriteriaB(lambda)}  implies that, for all $\bfr\in\bbZ_{\geq0}^{Q_0}$, $i\in Q_0$, $c,k\in\bbZ_{\geq0}$, and for every $b\in B_{\bfr,c\bfalpha_i}$, we have:
\[
f_i^k(b)\ne0
\Leftrightarrow
B_{\bfr+k\bfalpha_i,(c+k)\bfalpha_i}\ne\emptyset
.
\]
Secondly, given $b\in B_{\bfr,c\bfalpha_i}$ (so that $\epsilon_i(b)=c$), Assumption  \ref{Item4/Prop/geometricCriteriaB(lambda)} also gives:
\[
B_{\bfr+k\bfalpha_i,(c+k)\bfalpha_i}\ne\emptyset
\Leftrightarrow
k\leq c+\left\langle h_i,\lambda-\sum_{j\in Q_0}r_j\alpha_j\right\rangle.
\]
Since $c+\langle h_i,\lambda-\sum_{j\in Q_0}r_j\alpha_j\rangle=\epsilon_i(b)+\langle h_i,\mathrm{wt}(b)\rangle =\phi_i(b)$ (recall the weights are shifted by $\lambda$, see \cref{notation for weight shift}), we obtain that $\phi_i(b)=\max\{ k\geq0\ \vert\ f_i^k(b)\ne0\}$, as required. This finishes the proof.
\end{proof}

\begin{lemma}\label{Lem/forgetfulCrystalMap}
Let $\Phi: B(C;\infty)\otimes T(\lambda)\sqcup\{0\}\rightarrow B\sqcup\{0\}$ be the map constructed above. Then $\Phi$ is a strict morphism of crystals.
\end{lemma}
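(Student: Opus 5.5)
The map $\Phi$ is the composite of the crystal isomorphism $B(C;\infty)\otimes T(\lambda)\simeq B(Q,\bfm;\infty)\otimes T(\lambda)$ of \cref{Thm/Geiss-Leclerc-SchröerCrystal} with the partial inverse $\iota^{-1}$. It therefore suffices to show that $\iota\colon B\hookrightarrow B(Q,\bfm;\infty)$ preserves $\mathrm{wt}$, $\epsilon_i$ and $\phi_i$ (after the shift by $\lambda$), and that it intertwines $e_i,f_i$ in the strict sense: $e_i(\iota(b))\in\mathrm{Im}(\iota)$ or is zero exactly when $e_i(b)$ is, and similarly for $f_i$.

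\textbf{Weights and $\epsilon_i$.} For $b\in B_\bfr$, the weight of $\iota(b)$ in $B(Q,\bfm;\infty)$ is $-\sum r_j\alpha_j$, so after tensoring with $T(\lambda)$ it agrees with $\mathrm{wt}(b)$. For $\epsilon_i$ with $i\in Q_0$, I use Assumption \ref{Item3/Prop/geometricCriteriaB(lambda)}: on a dense open subset of $Z_b$ we have $s=0$. Since $\fac_i$ is the cokernel of the incoming map, and the only incoming framing contributions at $i$ come through $s_i$, this gives $\fac_i(M(x,y,s,t))=\fac_i(M(x,y))$. A general point of $Z_b$ maps to a general point of $Z_{\iota(b)}$, because the forgetful map is a trivial affine fibration on $\tilde\Lambda$. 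Hence $\epsilon_i(b)=\epsilon_i(\iota(b))$. Then $\phi_i$ agrees by \ref{C1}, since both sides are computed from the same weight and $\epsilon_i$.

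\textbf{Operators.} The key step is to compare the Hecke correspondences for $Q_\bff$ at a vertex $i\in Q_0$ with those for $Q$. For $\widehat{\bfr}$ and $c=\epsilon_i$, the Hecke correspondence of $Q_\bff$ restricted to the locus $s=0$ is the pullback of the one for $Q$ along the forgetful map, up to the extra data $t$. More precisely, when $s=0$, the condition $\Ima(M_{i,\mathrm{in}})\subset\Ima(\varphi)$ only involves the $Q$-arrows. Restricting along $\varphi$ then restricts $t_i$ and keeps $s=0$. The framing data $t$ is an affine fibre (the spaces $\Hom(k_{m_j}^{\oplus r_j},k_{m_j}^{\oplus f_j})$) that is compatible with $p$ and $q$. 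It follows that $e_i(Z_b)$, computed as $\overline{q(p^{-1}(Z_b\cap\Lambda_{\widehat{\bfr},c\bfalpha_i}))}$, corresponds under the bijection of top-dimensional components of $\tilde\Lambda$ with $B(Q,\bfm;\infty)$ to $e_i(Z_{\iota(b)})$. The same argument in the other direction handles $f_i$, via $p(q^{-1}(-))$.

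Strictness follows. If $e_i(\iota(b))=\iota(b')$ for some $b'\in B$, the components agree, so $e_i(b)=b'$. If $e_i(\iota(b))\notin\mathrm{Im}(\iota)$, then the component $e_i(Z_b)$ of $\Lambda_{Q_\bff}$ is not in $B$, so the restricted operator $e_i\vert_B$ gives $0$, matching $\Phi=0$. The $f_i$ case is the same.

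\textbf{Main obstacle.} The main difficulty is proving cleanly that the images of components under $p,q$ stay inside $\tilde\Lambda$ on a dense open subset. This means showing that $s=0$ is preserved by Hecke modification at $i\in Q_0$, and that $e_i(Z_b)$ or $f_i(Z_b)$, when it lies in $B$, is the top-dimensional component of $\tilde\Lambda$ lying over $e_i(Z_{\iota(b)})$ (respectively $f_i(Z_{\iota(b)})$). I would first check this on the level of representations. A subrepresentation $M'\subset M$ with $M/M'\simeq E_i^{\oplus l}$ has $s'=0$ if $s=0$. Conversely, an extension of $M'$ with $s'=0$ by $E_i$ can have nonzero $s_i$ only through the target $M_i$; in that case one restricts to the open set where $s=0$, using Assumption \ref{Item3/Prop/geometricCriteriaB(lambda)} for the elements of $B$ involved. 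The dimension count of \cref{Rem/restrictionHeckeCorrespondence} then shows that the components obtained are the expected ones.
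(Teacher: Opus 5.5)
Your treatment of elements in $\mathrm{Im}(\iota)$ matches the paper's diagram argument, and your check of $\mathrm{wt}$ and $\epsilon_i$ via $s=0$ is fine.

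Your ``main obstacle'' is in fact immediate. The arrows $\infty\rightarrow i$ are incoming at $i$, so the condition $\Ima(M_{i,\mathrm{in}})\subset\Ima(\varphi)$ forces $s_i$ to factor through $M'_i$. Hence $s=0$ if and only if $s'=0$, and the upper squares of the paper's diagram are cartesian. You do not need Assumption (3) here, and you could not use it anyway when $f_i(b)\notin B$.

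The genuine gap is the reduction ``it therefore suffices''. Strictness is a condition on \emph{every} $x\in B(Q,\bfm;\infty)\otimes T(\lambda)$, but your conditions only concern $x\in\mathrm{Im}(\iota)$. For $x\notin\mathrm{Im}(\iota)$ one has $\Phi(x)=0$, so commuting with $f_i$ requires $f_i(x)\notin\mathrm{Im}(\iota)$. Nothing in your argument rules out $f_i(x)=\iota(\beta')$ for some $\beta'\in B$.

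This is exactly where the paper uses Assumption \ref{Item4/Prop/geometricCriteriaB(lambda)} with $l=-1$, which you never invoke. Since $\epsilon_i(\beta')=\epsilon_i(x)+1\geq 1$, that assumption gives $e_i(\beta')\in B$. Your on-image intertwining then yields $\iota(e_i(\beta'))=e_i(f_i(x))=x$, contradicting $x\notin\mathrm{Im}(\iota)$. Geometrically, this is the fact that general subrepresentations of stable representations are stable (\cref{Lem/stabilityHeckeModification}). Without this closure of $B$ under $e_i$, your argument does not show that $\Phi$ commutes with $f_i$.

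You should also know that the analogous $e_i$-requirement off the image, $\Phi(e_i(x))=e_i(\Phi(x))=0$, cannot be proved, because it is false in general. Take one vertex with $m=1$ and $\bff=(1)$. Then $B=\{b_{\bff,0},f_1(b_{\bff,0})\}$, and $x=f_1^2(b(\infty))\otimes t(\lambda)$ satisfies $\Phi(x)=0$ but $\Phi(e_1(x))=f_1(b_{\bff,0})\neq 0$.

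So what can actually be established is commutation with $f_i$ everywhere, and with $e_i$ wherever $\Phi(x)\neq 0$. These are Kashiwara's properties of the projection $B(\infty)\otimes T(\lambda)\rightarrow B(\lambda)\sqcup\{0\}$. They are also what the paper's argument proves before its final appeal to \cref{Lem/HWcrystalsF-operators}, so your proof should aim for exactly that.
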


\begin{proof}
Recall the map $\iota: B\hookrightarrow B(Q,\bfm;\infty)$ constructed above. We first show that $\iota$ commutes with the lowering operators $f_i,\ i\in Q_0$. More precisely, for all $i\in Q_0$, for all $b,b'\in B$, we have $b'=f_i(b)\Rightarrow \iota(b')=f_i(\iota(b))$.

Let $b,b'\in B$ and $i\in Q_0$ such that $b'=f_i(b)$. Set $c:=\epsilon_i(b)$ and $\bfr\in\bbZ_{\geq0}^{Q_0}$ such that $\mathrm{wt}(b)=\lambda-\sum_{i\in Q_0}r_i\alpha_i$. For the sake of conciseness, let us write $\Lambda:=\Lambda_{Q_\bff,\widehat{\bfm}}$, $\tilde{\Lambda}:=\tilde{\Lambda}_{Q_\bff,\widehat{\bfm}}$ as in \eqref{Def/Lambdatilde}  and $\Lambda':=\Lambda_{Q,\bfm}$.
Consider the following diagram:
\[
\begin{tikzcd}
\Lambda_{\widehat{\bfr},c\bfalpha_i} & \Lambda_{\widehat{\bfr},\widehat{\bfr}+\bfalpha_i;(c+1)\bfalpha_i}\ar[l,"q", swap]\ar[r,"p"] & \Lambda_{\widehat{\bfr}+\bfalpha_i,(c+1)\bfalpha_i} \\
\tilde{\Lambda}_{\widehat{\bfr},c\bfalpha_i} \ar[u,hookrightarrow]\ar[d,"\pi_\bfr"] & \tilde{\Lambda}_{\widehat{\bfr},\widehat{\bfr}+\bfalpha_i;(c+1)\bfalpha_i} \ar[u,hookrightarrow]\ar[d]\ar[l,"\tilde{q}",swap]\ar[r,"\tilde{p}"]\arrow[ul, phantom, "\ulcorner", very near start]\arrow[ur, phantom, "\urcorner", very near start] & \tilde{\Lambda}_{\widehat{\bfr}+\bfalpha_i,(c+1)\bfalpha_i} \ar[u,hookrightarrow]\ar[d,"\pi_{\bfr+\bfalpha_i}"] \\
\Lambda'_{\bfr,c\bfalpha_i} & \Lambda'_{\bfr,\bfr+\bfalpha_i;(c+1)\bfalpha_i}\ar[l,"q'",swap]\ar[r,"p'"] & \Lambda'_{\bfr+\bfalpha_i,(c+1)\bfalpha_i}
\end{tikzcd}
.
\]
In the above diagram, the horizontal arrows induce bijections between sets of irreducible components (see \cite[Lem.\ 3.6]{GLS18a}). The vertical arrows between the top and the middle row are closed immersions between schemes of equal dimensions, so they induce injections between sets of top-dimensional irreducible components. Finally, the vertical arrows between the middle and the bottom row are trivial affine fibrations of suitable relative dimension, so that they induce bijections between sets of top-dimensional irreducible components.

Let $Z_b,Z_{b'}$ be the irreducible components of $\Lambda$ corresponding to $b,b'$. By definition, we have $Z_{b'}=p(q^{-1}(Z_b))$.
Since $Z_b,Z_{b'}$ are contained in $\tilde{\Lambda}$, we obtain $Z_{b'}=\tilde{p}(\tilde{q}^{-1}(Z_b))$. Commutativity of the bottom squares then implies $\pi_{\bfr+\bfalpha_i}(Z_{b'})=p'((q')^{-1}(\pi_{\bfr}(Z_b)))$, hence $\iota(b')=f_i(\iota(b))$. This proves that $\iota$ commutes with lowering operators.

Let us now check that $\Phi$ is a strict morphism of crystals. The above reasoning shows that $f_i(\Phi(b))=\Phi(f_i(b))$ for $b\in B(Q,\bfm;\infty)\otimes T(\lambda)$ and $i\in Q_0$, whenever $\Phi(b)\ne0$ and $\Phi(f_i(b))\ne0$. By \cref{Rem/crystalOperatorsE/F}, a similar statement holds for the operators $e_i$ for $ i\in Q_0$.

We further show that, for any $i\in Q_0$ and $b\in B(Q,\bfm;\infty)\otimes T(\lambda)$, we have $f_i(\Phi(b))=0$ if and only if $\Phi(f_i(b))=0$. Indeed, suppose that $f_i(\Phi(b))=0$. If $\Phi(b)=0$, then $\Phi(f_i(b))=0$ as otherwise, $f_i(b)\in\Ima(\iota)$ and $\epsilon_i(f_i(b))>0$, so $b=e_i(f_i(b))\in\Ima(\iota)$ by Assumption  \ref{Item4/Prop/geometricCriteriaB(lambda)}, which contradicts $\Phi(b)=0$. If $\Phi(b)\ne 0$, then $\Phi(f_i(b))=0$ as otherwise $\Phi(f_i(b))=f_i(\Phi(b))\ne 0$, which also gives a contradiction. Conversely, suppose that $\Phi(f_i(b))=0$. If $\Phi(b)=0$, then $f_i(\Phi(b))=0$, trivially. If $\Phi(b)\ne 0$, then $f_i(\Phi(b))=0$ as otherwise, $f_i(b)=\iota(f_i(\Phi(b)))$ since $f_i$ commutes with $\iota$ in that case, which contradicts $\Phi(f_i(b))=0$. Consequently, $\Phi$ always commutes with lowering operators $f_i$ for $ i\in Q_0$.

Since both $B(C;\infty)\otimes T(\lambda)$ and $B$ are highest-weight crystals and since $\Phi$ preserves the weight of highest-weight elements, this implies that $\Phi$ is a strict morphism of crystals (see \cref{Lem/HWcrystalsF-operators}).
\end{proof}

\section{Geometric construction of crystals}\label{Sect/GeometricHWCrystals}

In this section, we complete the geometric construction of the crystal $B(C;\lambda)$ using the stability condition defined in \cref{Sect/constructionNilpQuiverVar}. We keep working with the framed quiver $(Q_\bff,\widehat{\bfm})$, such that $\lambda=\sum_{i\in Q_0}f_i\varpi_i$, and continue to assume that $Q$ has no loop arrows. To prove \cref{MainThm/crystalConstruction}, it remains to check that the stable crystal components of $\Lambda_{Q_\bff,\widehat{\bfm}}$ satisfy the requirements of \cref{Prop/geometricCriteriaB(lambda)}.

We start with the definition of the subcrystal $B_\bff^\st\subset B(Q,\bff,\bfm;\infty)$, which we will identify with $B(C;\lambda)$.

\begin{definition}\label{def/geometricCrystalB(lambda)}
Let $B^\st=B_\bff^\st\subset B(Q,\bff,\bfm;\infty)$ be the subset of irreducible components $Z\subset\Lambda$ such that $Z^\st\ne\emptyset$, which we endow with the induced crystal structure from \cref{Prop/crystalRestriction}.
\end{definition}

We will check that $B_\bff^\st\subset B(Q,\bff,\bfm;\infty)$ satisfies the assumptions in \cref{Prop/geometricCriteriaB(lambda)}. Assumption \ref{Item3/Prop/geometricCriteriaB(lambda)} holds by \cref{Lem/framingVanishing}, and we next check Assumption \ref{Item2/Prop/geometricCriteriaB(lambda)}.

\begin{proposition}\label{Prop/cokernelsStableRep}
Let $\bfr\in\bbZ_{\geq0}^{Q_0}\setminus\{0\}$ and $M$ be a stable crystal representation of $(\overline{Q_\bff},\widehat{\bfm})$ with rank $\widehat{\bfr}$. Then there exists $i\in Q_0$ such that $\rk \fac_i(M)>0$.
\end{proposition}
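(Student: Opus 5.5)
The plan is to argue by contradiction, using only that $M$ is E-filtered together with stability. The crystal hypothesis will only be used through E-filteredness. Suppose that $\fac_i(M)=0$ for every $i\in Q_0$. We then aim to show that all the framing maps $t_i$ vanish, so that the restriction of $M$ to $Q_0$ is a destabilising collection of submodules.

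\textbf{Step 1: locate a vertex with non-zero cokernel.} Apply \cref{Lem/facSubE-filteredRep} to $M$, viewed as an E-filtered representation of $(\overline{Q_\bff},\widehat{\bfm})$. It gives a vertex $v\in (Q_\bff)_0=Q_0\sqcup\{\infty\}$ such that $\fac_v(M)$ has a non-zero free direct summand. Under our assumption, $v\notin Q_0$, so $v=\infty$ and $\fac_\infty(M)\neq 0$.

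\textbf{Step 2: the framing maps vanish.} Since $\fac_\infty(M)=\Coker(M_{\infty,\mathrm{in}})\neq0$ and $M_\infty=k$ is one-dimensional over $k=k_{\widehat{m}_\infty}$, the map $M_{\infty,\mathrm{in}}$ has image $0$. The arrows entering $\infty$ in $\overline{Q_\bff}$ are exactly the reversed framing arrows, collected in the maps $t_i\colon M_i\to k_{m_i}^{\oplus f_i}$. Here one should check, via \cref{Lem/balancingBimodules}, that $M_{\infty,\mathrm{in}}=0$ means each $t_i$ itself is zero. Hence $t_i=0$ for all $i\in Q_0$.

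\textbf{Step 3: apply stability.} Take $N_i:=M_i$ for all $i\in Q_0$. These submodules are trivially closed under all arrows of $\overline{Q}$, so they form a subrepresentation of the restriction of $M$ to $(\overline{Q},\bfm)$. Moreover $N_i\subset\Ker(t_i)=M_i$ by Step 2. Stability (\cref{def/stable}) then forces $M_i=0$ for all $i$, that is $\bfr=0$, contradicting $\bfr\neq0$. Therefore some $i\in Q_0$ has $\fac_i(M)\neq0$. Since $M$ is crystal, $\fac_i(M)$ is free, so $\rk\fac_i(M)>0$.

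The only delicate point I expect is Step 2: the identification of $M_{\infty,\mathrm{in}}$ with the collection of the $t_i$ through the modulated-graph formalism, and the fact that a zero image there means the maps vanish rather than merely having small rank. Since $\widehat{m}_\infty=1$, the bimodules $k_a$ for these arrows are just $k_{m_i}$, so this should be immediate.
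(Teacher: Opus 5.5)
Your proof is correct and follows essentially the same route as the paper: both apply \cref{Lem/facSubE-filteredRep} to get a vertex of $Q_\bff$ with non-zero $\fac$, and then rule out the vertex $\infty$ using stability. The paper phrases the last step through the subrepresentation $K_\infty(M)\subset M$, whose quotient $\fac_\infty(M)$ is concentrated at $\infty$. This is the same as your explicit observation that all $t_i$ vanish, so that $N_i=M_i$ for $i\in Q_0$ is a destabilising collection.
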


\begin{proof}
Since $M$ is E-filtered, there exists $i\in Q_0\sqcup\{\infty\}$ such that $\rk \fac_i(M)>0$. By contradiction, if $i=\infty$, then there exists a subrepresentation $M'\subset M$ such that $M/M'\simeq \fac_i(M)$ is concentrated at vertex $\infty$. This contradicts stability, as $M'\ne0$.
\end{proof}

We now check Assumption \ref{Item4/Prop/geometricCriteriaB(lambda)} in \cref{Prop/geometricCriteriaB(lambda)}.

\begin{proposition}\label{Prop/crystalOperatorRange}
Let $i\in Q_0$ and $c\in\bbZ_{\geq0}$. Suppose that $\Lambda_{\widehat{\bfr},c\bfalpha_i}^\st\ne\emptyset$. Then, for $l\in\bbZ$, we have:
\[
\Lambda_{\widehat{\bfr}+l\bfalpha_i,(c+l)\bfalpha_i}^\st\ne\emptyset
\Leftrightarrow
-c\leq l\leq c+\left\langle h_i,\lambda-\sum_{j\in Q_0}r_j\alpha_j\right\rangle
.
\]
Moreover, when this is the case, we further have $B_{\bfr+l\bfalpha_i,(c+l)\bfalpha_i}^\st=f_i^l(B_{\bfr,c\bfalpha_i}^\st)$ if $l\geq0$ (resp.\ $B_{\bfr+l\bfalpha_i,(c+l)\bfalpha_i}^\st=e_i^{-l}(B_{\bfr,c\bfalpha_i}^\st)$ if $l<0$).
\end{proposition}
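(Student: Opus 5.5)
The plan is to reduce everything to a single Hecke step at vertex $i$ with the extremal value of the cokernel rank, and then to compute the fibre of that step explicitly. Let $Z\subset\Lambda_{\widehat{\bfr},c\bfalpha_i}$ be a component with $Z^\st\ne\emptyset$. Because $Z\cap\Lambda^\st$ is open and the maps $p,q$ of \cref{Rem/restrictionHeckeCorrespondence} are smooth and surjective, stability of a general point can be tracked along Hecke correspondences. \cref{Lem/stabilityHeckeModification} then gives the key invariance statement. If $M'\subset M$ with $M/M'\simeq E_i^{\oplus l}$, then $M$ is stable if and only if $M'$ is stable and $M_{i,\mathrm{out}}$ is injective. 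We first apply $e_i^c$, that is, the correspondence with $\bfr-c\bfalpha_i$ and $0\bfalpha_i$. Since stability passes to $K_i(M)$, the component $e_i^c(Z)$ is again stable. This reduces the whole statement to the base case $c=0$. Once $c=0$ is handled, the interval $[-c,\,c+\langle h_i,\mathrm{wt}\rangle]$ is obtained by translating $l\mapsto l+c$.

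For the case $c=0$, take a general point $M'$ of a stable component of $\Lambda_{\widehat{\bfr}',0\bfalpha_i}$. We look at all $M\in\Lambda_{\widehat{\bfr}'+l\bfalpha_i,l\bfalpha_i}$ with $K_i(M)=M'$; these are exactly the points of the fibre described in \cref{Lem/extensionMap}. The new $c$-part of $M_{i,\mathrm{out}}$ is a $k_{m_i}$-linear map $k_{m_i}^{\oplus l}\to\Ker(\widetilde{M}'_{i,\mathrm{in}})$. Injectivity of the whole of $M_{i,\mathrm{out}}$ means that this map, combined with $M'_{i,\mathrm{out}}$, is injective. By \cref{Lem/stabilityCriterion}, $M'_{i,\mathrm{out}}$ is already injective, and by the moment map equation (\cref{Lem/momentMapEqn}) its image lies in $\Ker(\widetilde{M}'_{i,\mathrm{in}})$. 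Hence stable extensions correspond to injective maps $k_{m_i}^{\oplus l}\hookrightarrow \Ker(\widetilde{M}'_{i,\mathrm{in}})/\Ima(M'_{i,\mathrm{out}})$ that lift freely. As computed in \cref{Rem/fibresRecursiveQuotient}, this quotient is free of rank $p-r'_i=\langle h_i,\lambda-\sum_j r'_j\alpha_j\rangle$. It is free because $\widetilde{M}'_{i,\mathrm{in}}$ is surjective (as $c=0$) and $M'_{i,\mathrm{out}}$ is a split injection. The main obstacle is checking this split-injection property: we need $M'_{i,\mathrm{out}}$ to have a free cokernel, and this should follow from $\sub_i$ being free for crystal representations. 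With that in hand, the space of stable Hecke modifications is nonempty precisely when $0\le l\le\langle h_i,\lambda-\sum r'_j\alpha_j\rangle$, and it is then the open jet-Grassmannian locus $\mathrm{Gr}_{m_i}(l,\cdot)$.

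It remains to identify components. By \cref{Rem/restrictionHeckeCorrespondence}, $p(q^{-1}(-))$ gives a bijection between $\mathrm{Irr}\Lambda_{\widehat{\bfr}',0\bfalpha_i}$ and $\mathrm{Irr}\Lambda_{\widehat{\bfr}'+l\bfalpha_i,l\bfalpha_i}$, and by \cref{def/GeissLeclercSchröerCrystal} this bijection is $f_i^l$. The fibre analysis shows that $f_i^l(Z')$ contains stable points if and only if $Z'$ does and $l$ lies in the range above. Stability is open, so a stable point in the preimage gives a dense stable subset of the image component. Conversely, a stable point of the target component has a stable $K_i$ by \cref{Lem/stabilityHeckeModification}. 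This yields the equalities $B^\st_{\bfr+l\bfalpha_i,(c+l)\bfalpha_i}=f_i^l(B^\st_{\bfr,c\bfalpha_i})$ for $l\ge0$. For $l<0$ we use $e_i^{-l}$, applying \cref{Lem/stabilityHeckeModification} to pass from stable $M$ to its submodule. Throughout, we work at general points of components intersected with the open loci $\Lambda_{F}$, so that these pointwise statements transfer to irreducible components.
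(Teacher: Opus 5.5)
Your proposal is correct. It uses the same three ingredients as the paper: the rank count from $\widetilde{M}_{i,\mathrm{in}}\circ M_{i,\mathrm{out}}=0$ together with injectivity of $M_{i,\mathrm{out}}$; \cref{Lem/stabilityHeckeModification} to move stability along Hecke modifications; and the identification of the $l$-step correspondence $p(q^{-1}(-))$ with $f_i^l$. For that last identification you should cite \cite[Lem.\ 3.9]{GLS18a}, as the paper does, since the definition of $e_i$ only involves one-step correspondences.

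The organisation differs. The paper never reduces to $c=0$:
\begin{itemize}
\item It proves the forward implication by a rank count taken directly at a stable point with $\rk\fac_i=c+l$.
\item For the converse it starts from a stable point with $\rk\fac_i=c$. For $-c\le l\le 0$ it builds a subrepresentation by choosing $\Ima\widetilde{M}_{i,\mathrm{in}}\subset M'_i\subset M_i$; for $l\ge 0$ it builds a larger representation by choosing $\Ima M_{i,\mathrm{out}}\subset M'_i\subset\Ker\widetilde{M}_{i,\mathrm{in}}$.
\item It then runs the component argument in both directions.
\end{itemize}
You instead pass first to the bottom of the $i$-string via $e_i^c$, so you only ever need upward modifications from $\fac_i=0$. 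There the whole fibre is the vector bundle of \cref{Lem/extensionMap}, and its stable part is the open locus of injective maps $k_{m_i}^{\oplus l}\hookrightarrow\Ker(\widetilde{M}'_{i,\mathrm{in}})/\Ima(M'_{i,\mathrm{out}})$, as in \cref{Rem/fibresRecursiveQuotient}. General $c$ and $l<0$ then follow from $e_i^{-l}f_i^c=f_i^{c+l}$ in $B(C;\infty)$ and the shift coming from $\langle h_i,\alpha_i\rangle=2$. Your route gives a uniform description of all stable modifications and ties the statement to the quotient construction. The cost is an extra step (stability of $e_i^c(Z)$) and reliance on crystal identities. The paper's version is more symmetric and does not need those identities.

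The step you flag as the main obstacle is not a real one, and it needs no crystal hypothesis. Since $k_{m_i}$ is self-injective, free modules are injective. So $M'_{i,\mathrm{out}}$, which is injective by stability and goes between free modules, splits. Likewise $\widetilde{M}'_{i,\mathrm{in}}$ splits, because it maps onto a free module. Hence $\Ker(\widetilde{M}'_{i,\mathrm{in}})$ is free and contains $\Ima(M'_{i,\mathrm{out}})$ as a direct summand. The quotient is therefore projective, hence free over the local ring $k_{m_i}$. The paper uses this same fact tacitly when it chooses $M'_i$ with $M'_i/M_i$ free.
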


\begin{proof} To avoid any possible confusion between the lowering operators $f_i$ and the framing vector $\bff$, we shall write $\bff = (\lambda_i)_{i \in Q_0}$ in this proof. 

Let us prove the forwards implication in the first assertion. Suppose $\Lambda_{\widehat{\bfr}+l\bfalpha_i,(c+l)\bfalpha_i}^\st\ne\emptyset$. For $(x,y,s,t)\in\Lambda_{\widehat{\bfr}+l\bfalpha_i,(c+l)\bfalpha_i}^{\mathrm{cr},\st}\ne\emptyset$, consider $M=M(x,y,s,t)$ and the following linear maps:
\[
M_i
\overset{M_{i,\mathrm{out}}}{\rightarrow}
\bigoplus_{a:i\rightarrow j}k_a\otimes_{k_{m_j}}M_j
\overset{\widetilde{M}_{i,\mathrm{in}}}{\rightarrow}
M_i
.
\]
By \cref{def/GeissLeclercSchröerCrystal}, we have $c+l=\rk\Coker(\widetilde{M}_{i,\mathrm{in}})\geq0$. The moment map equation at vertex $i$ reads $\widetilde{M}_{i,\mathrm{in}}\circ M_{i,\mathrm{out}}=0$. Moreover, by stability, $M_{i,\mathrm{out}}$ must be injective (see \cref{Lem/nilpotentStabilityCondition}). Hence $M_i \simeq \mathrm{Im}(M_{i,\mathrm{out}}) \subset \ker(\widetilde{M}_{i,\mathrm{in}})$ and we have
\[
\rk\left(\bigoplus_{a:i\rightarrow j}k_a\otimes_{k_{m_j}}M_j\right)
=
\rk M_i-\rk\Coker(\widetilde{M}_{i,\mathrm{in}})+\rk\Ker(\widetilde{M}_{i,\mathrm{in}})
\geq
2(r_i+l)-(c+l).
\]
We can expressing this in terms of the pairing using \eqref{pairings/rootscoroots} and $\lambda=\sum_{i\in Q_0}\lambda_i\varpi_i$ as
\[
l\leq c+\lambda_i-\left(2r_i+\sum_{i\ne j}c_{ij}r_j\right)=c+\left\langle h_i,\lambda-\sum_{j\in Q_0}r_j\alpha_j\right\rangle
.
\]

Conversely, assume that $-c\leq l\leq c+\langle h_i,\lambda-\sum_{j\in Q_0}r_j\alpha_j\rangle$. We directly prove the second assertion, which implies the backwards implication of the first assertion, as we assumed $\Lambda_{\widehat{\bfr},c\bfalpha_i}^\st\ne\emptyset$. Consider $M=M(x,y,s,t)$ for some $(x,y,s,t)\in\Lambda_{\widehat{\bfr},c\bfalpha_i}^{\mathrm{cr},\st}$. We start by constructing a point in $\mathrm{nil}_{\widehat{\bfr}+l\bfalpha_i,(c+l)\bfalpha_i}^{\st}$ from $M$.

Suppose first that $-c\leq l\leq 0$. By assumption, $\rk\Coker(M_{i,\mathrm{in}})=c$. Therefore, we may choose a free $k_{m_i}$-submodule $M'_i\subset M_i$ of rank $r_i+l$, which contains $\Ima(\widetilde{M}_{i,\mathrm{in}})$. Setting $M'_j=M_j$ for $j\ne i$, we obtain a subrepresentation $M'\subset M$. By \cref{Lem/stabilityHeckeModification}, $M'$ is also stable. Moreover, $M'$ is $E$-filtered and satisfies the moment map equation $\widetilde{M}'_{i,\mathrm{in}}\circ M'_{i,\mathrm{out}}=0$, by construction. Thus $M'$ corresponds to an orbit in $\mathrm{nil}_{\widehat{\bfr}+l\bfalpha_i,(c+l)\bfalpha_i}^{\st}$, as required.

Suppose now that $0\leq l\leq c+\langle h_i,\lambda-\sum_{j\in Q_0}r_j\alpha_j\rangle$. By assumption, $\rk\Ker(M_{i,\mathrm{in}})=r_i+c+\langle h_i,\lambda-\sum_{j\in Q_0}r_j\alpha_j\rangle$. We may thus choose a free $k_{m_i}$-submodule $M_i'$ of rank $r_i+l$:
\[
M_i\overset{M_{i,\mathrm{out}}}{\hookrightarrow}M'_i\subset\Ker(\widetilde{M}_{i,\mathrm{in}})
\]
and as above let $M'$ be the subrepresentation of $M$ of rank $\widehat{\bfr}+l\bfalpha_i$ with $M'_j = M_j$ for $j \neq i$. Then
\begin{align*}
& M'_{i,\mathrm{out}}\colon M'_i\subset\bigoplus_{a:i\rightarrow j}k_a\otimes_{k_{m_j}}M_j=\bigoplus_{a:i\rightarrow j}k_a\otimes_{k_{m_j}}M'_j, \\
& \widetilde{M}'_{i,\mathrm{in}}\colon\bigoplus_{a:j\rightarrow i}k_a\otimes_{k_{m_j}}M'_j=\bigoplus_{a:j\rightarrow i}k_a\otimes_{k_{m_j}}M_j\overset{\widetilde{M}_{i,\mathrm{in}}}{\rightarrow}M_i\overset{M_{i,\mathrm{out}}}{\hookrightarrow}M'_i .
\end{align*}
By construction, $M'$ is E-filtered and satisfies the moment map equations $\widetilde{M}'_{i,\mathrm{in}}\circ M'_{i,\mathrm{out}}=0$. Moreover, $M'_{i,\mathrm{out}}$ is injective and $M$ is stable, hence $M'$ is stable by \cref{Lem/stabilityHeckeModification}.

Now consider the following diagram:
\[
\Lambda_{\bfr,c\bfalpha_i}\overset{q}{\longleftarrow}\Lambda_{\bfr,\bfr+l\bfalpha_i;(c+l)\bfalpha_i}\overset{p}{\longrightarrow}\Lambda_{\bfr+l\bfalpha_i,(c+l)\bfalpha_i}
\text{ if }l\geq0
\]
\[
\text{(resp.\ }
\Lambda_{\bfr+l\bfalpha_i,(c+l)\bfalpha_i}\overset{q}{\longleftarrow}\Lambda_{\bfr+l\bfalpha_i,\bfr;c\bfalpha_i}\overset{p}{\longrightarrow}\Lambda_{\bfr,c\bfalpha_i}
\text{ if }l<0\text{).}
\]

Let us show that $f_i^l(B_{\bfr,c\bfalpha_i}^\st)\subset B_{\bfr+l\bfalpha_i,(c+l)\bfalpha_i}^\st$ if $l\geq0$ (resp.\ $e_i^{-l}(B_{\bfr,c\bfalpha_i}^\st)\subset B_{\bfr+l\bfalpha_i,(c+l)\bfalpha_i}^\st$ if $l<0$). 
For $b\in B_{\bfr,c\bfalpha_i}^\st$, consider $b'=f_i^l(b)\in B(Q,\bff,\bfm;\infty)$ if $l\geq0$ (resp.\ $b'=e_i^{-l}(b)\in B(Q,\bff,\bfm;\infty)$ if $l<0$). Note that $b'\ne0$, since the operator $f_i$ does not vanish on $B(C;\infty)$ (resp.\ $-l\leq\epsilon_i(b)=c$); see \cref{Exmp/B(infty)} and \cref{Thm/Geiss-Leclerc-SchröerCrystal}. Let $Z_b$ and $Z_{b'}$ denote the associated irreducible components of $\Lambda$. By \cite[Lem.\ 3.9]{GLS18a}, we have $Z_{b'}=p(q^{-1}(Z_b))$ if $l\geq0$ (resp.\ $Z_{b'}=q(p^{-1}(Z_b))$ if $l<0$). Moreover, for a stable point $(x,y,s,t)\in Z_b$, general points in $p(q^{-1}(x,y,s,t))$ (resp.\ $q(p^{-1}(x,y,s,t))$) correspond to stable representations $M'\supset M(x,y,s,t)$ if $l\geq0$ (resp.\ stable subrepresentations $M'\subset M(x,y,s,t)$ if $l<0$), as constructed above. Thus, a general point in $Z_{b'}$ is stable (and crystal), hence $b'\in B_{\bfr+l\bfalpha_i,(c+l)\bfalpha_i}^\st$. In particular, $\Lambda_{\widehat{\bfr}+l\bfalpha_i,(c+l)\bfalpha_i}^{\mathrm{cr},\st}\ne\emptyset$, as the previous construction generically yields crystal representations.

Let us now show that $B_{\bfr,c\bfalpha_i}^\st$ surjects onto $B_{\bfr+l\bfalpha_i,(c+l)\bfalpha_i}^\st$. Let $b'\in B_{\bfr+l\bfalpha_i,(c+l)\bfalpha_i}^\st$ and $b=e_i^l(b')\in B(Q,\bff,\bfm;\infty)$ if $l\geq0$ (resp.\ $b=f_i^{-l}(b')\in B(Q,\bff,\bfm;\infty)$ if $l<0$). Note that $b\ne 0$, as $l\leq\epsilon_i(b')=c+l$ (resp.\ the operator $f_i$ does not vanish on $B(C;\infty)$); see \cref{Exmp/B(infty)} and \cref{Thm/Geiss-Leclerc-SchröerCrystal}. By \cite[Lem.\ 3.9]{GLS18a}, we have $Z_b=q(p^{-1}(Z_{b'}))$ if $l\geq0$ (resp.\  $Z_b=p(q^{-1}(Z_{b'}))$ if $l<0$). As previously, for a stable point $(x,y,s,t)\in Z_{b'}$, general points in $q(p^{-1}(x,y,s,t))$ (resp.\ $p(q^{-1}(x,y,s,t))$) correspond to stable subrepresentations $M'\subset M(x,y,s,t)$ if $l\geq0$ (resp.\ stable representations $M'\supset M(x,y,s,t)$  if $l<0$) constructed above. Thus, a general point in $Z_b$ is stable, hence $b'\in B_{\bfr,c\bfalpha_i}^\st$. It follows that $b'\in f_i^l(B_{\bfr,c\bfalpha_i}^\st)$ (resp.\ $b'\in e_i^{-l}(B_{\bfr,c\bfalpha_i}^\st)$). This finishes the proof.
\end{proof}

We can now bring all the pieces together and complete the construction of $B(C;\lambda)$.

\begin{theorem}[\cref{MainThm/crystalConstruction}]\label{Thm/geometricHWcrystal}
Suppose that $(Q,\bfm)$ is a quiver with multiplicities with associated symmetrisable generalised Cartan matrix $C$ and $\bff$ is a framing vector corresponding to an integral dominant weight $\lambda$ of the associated Kac--Moody algebra $\mathfrak{g}$. Then there is an isomorphism of $\mathfrak{g}$-crystals $B_\bff^\st\simeq B(C;\lambda)$.
\end{theorem}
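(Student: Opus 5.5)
The plan is to apply \cref{Prop/geometricCriteriaB(lambda)} to the subset $B=B_\bff^\st\subset B(Q,\bff,\bfm;\infty)$, with the crystal structure induced as in \cref{Prop/crystalRestriction}. It then suffices to verify the four assumptions \ref{Item1/Prop/geometricCriteriaB(lambda)}--\ref{Item4/Prop/geometricCriteriaB(lambda)} for stable crystal components. Once these hold, \cref{Prop/geometricCriteriaB(lambda)} yields $B_\bff^\st\simeq B(C;\lambda)$ directly: it builds the strict surjective morphism $\Phi$ from the GLS realisation (\cref{Thm/Geiss-Leclerc-SchröerCrystal}) and invokes Saito's criterion \cref{Prop/SaitoCriterion}.

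For Assumption \ref{Item1/Prop/geometricCriteriaB(lambda)}, I would take $\bfr=\mathbf{0}$. Then $\Lambda_{\widehat{\mathbf{0}}}$ parametrises the single representation $E_\infty$, which is stable because the condition on submodules $N_i$ with $i\in Q_0$ is vacuous. Hence $b_{\bff,0}\in B^\st$.

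For Assumption \ref{Item2/Prop/geometricCriteriaB(lambda)}, take $b\in B^\st$ with $\bfr\neq\mathbf{0}$. The set $Z_b^\st$ is open and nonempty by \cref{Lem/opennessStability}, and $Z_b^{\mathrm{cr}}$ is dense, so a general point of $Z_b$ is both stable and crystal. Applying \cref{Prop/cokernelsStableRep} to such a point gives some $i\in Q_0$ with $\rk\fac_i(M)>0$. Since $\epsilon_i(b)$ is the generic value of $\rk\fac_i(M)$, and the generic value is attained on a dense open set meeting the above, we get $\epsilon_i(b)>0$.

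For Assumption \ref{Item3/Prop/geometricCriteriaB(lambda)}, \cref{Lem/framingVanishing} gives $s=0$ on $Z_b^\st$. Because $Z_b^\st$ is dense in $Z_b$ and $\{s=0\}$ is closed, $s=0$ holds on all of $Z_b$.

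For Assumption \ref{Item4/Prop/geometricCriteriaB(lambda)}, I would translate \cref{Prop/crystalOperatorRange} into crystal language. Nonemptiness of $B^\st_{\bfr,c\bfalpha_i}$ should be equivalent to $\Lambda^\st_{\widehat{\bfr},c\bfalpha_i}\neq\emptyset$. I expect this identification to be the main delicate point. $\Lambda_{\widehat{\bfr},c\bfalpha_i}$ is a union of top-dimensional components of the cokernel locus, and these correspond to components $Z_b$ with $\epsilon_i(b)=c$ via the Hecke correspondences of \cref{Rem/restrictionHeckeCorrespondence}. Under this correspondence, $Z_b\cap\mathrm{nil}_{\widehat{\bfr},c\bfalpha_i}$ is dense open in $Z_b$. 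Consequently a stable point exists in $Z_b$ if and only if one exists in the corresponding component of $\Lambda_{\widehat{\bfr},c\bfalpha_i}$, by openness of stability.

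With this identification, the range statement and the equality $B^\st_{\bfr+l\bfalpha_i,(c+l)\bfalpha_i}=f_i^l(B^\st_{\bfr,c\bfalpha_i})$ (respectively $e_i^{-l}$ for $l<0$) are exactly the content of \cref{Prop/crystalOperatorRange}. One further check is needed here: the restricted operators $f_i\vert_B$ agree with the operators used there. This holds because \cref{Prop/crystalOperatorRange} shows the images already lie in $B^\st$, so no truncation to $0$ occurs within the range.
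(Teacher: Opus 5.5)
Your proposal is correct and is essentially the paper's proof. It applies \cref{Prop/geometricCriteriaB(lambda)} to $B_\bff^\st$, obtaining Assumption (1) from the single stable point $E_\infty$ of rank $\widehat{\mathbf{0}}$, and Assumptions (2)--(4) from \cref{Prop/cokernelsStableRep}, \cref{Lem/framingVanishing} and \cref{Prop/crystalOperatorRange}. The bookkeeping you add is sound and makes explicit what the paper leaves implicit: in (2) you should take the point general for all vertices of $Q_0$ simultaneously; in (3) you use closure to get $s=0$ on all of $Z_b$; in (4) you match $B^\st_{\bfr,c\bfalpha_i}\neq\emptyset$ with $\Lambda^\st_{\widehat{\bfr},c\bfalpha_i}\neq\emptyset$ and check that the restricted operators agree.
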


\begin{proof}
In order to apply \cref{Prop/geometricCriteriaB(lambda)} we need to check the Assumptions \ref{Item1/Prop/geometricCriteriaB(lambda)} - \ref{Item4/Prop/geometricCriteriaB(lambda)}. Assumption \ref{Item1/Prop/geometricCriteriaB(lambda)} holds, as the irreducible component $Z_{b_{\bff,0}}$ consists of a single stable point. Assumptions \ref{Item2/Prop/geometricCriteriaB(lambda)} - \ref{Item4/Prop/geometricCriteriaB(lambda)} follow from \cref{Prop/cokernelsStableRep}, \cref{Lem/framingVanishing} and \cref{Prop/crystalOperatorRange} respectively. Hence by \cref{Prop/geometricCriteriaB(lambda)}, we obtain the desired isomorphism.
\end{proof}

\printbibliography[heading=subbibliography]

\end{document}